\documentclass[11pt,reqno]{amsart}

\usepackage{enumerate} 
\usepackage{graphicx} 
\usepackage{xcolor}
\usepackage{amsthm}
\usepackage{amsmath,amsfonts,amssymb,mathrsfs}
\usepackage{mathtools}
\usepackage{enumitem}
\usepackage{ifthen}
\usepackage{hyperref}
\usepackage[margin=1.5in]{geometry}
\usepackage{caption}
\usepackage{sidecap}
\usepackage{rotating}
\usepackage{color}
\usepackage{cleveref}
\usepackage{esint}
\usepackage[T1]{fontenc}
\usepackage{lmodern}

\provideboolean{shownotes} 
\setboolean{shownotes}{true} 

\newcommand{\margnote}[1]{
\ifthenelse{\boolean{shownotes}}
{\marginpar{\raggedright\tiny\texttt{#1}}}
{}
}
\newcommand{\hole}[1]{
\ifthenelse{\boolean{shownotes}}
{\begin{center} \fbox{ \rule {.25cm}{0cm} \rule[-.1cm]{0cm}{.4cm}
\parbox{.85\textwidth}{\begin{center} \texttt{#1}\end{center}} \rule
{.25cm}{0cm}}\end{center}} {} }

\numberwithin{equation}{section}
\newtheorem{theorem}{Theorem}[section]
\newtheorem{lemma}{Lemma}[section]
\newtheorem{corollary}{Corollary}[section]
\newtheorem{proposition}{Proposition}[section]
\newtheorem{remark}{Remark}[section]
\newtheorem{definition}{Definition}[section]

\newcommand{\R}{\mathbb R}

\newcommand{\bq}{\begin{equation}}
\newcommand{\eq}{\end{equation}}

\newcommand{\nn}{\notag}

\newcommand{\Kg}{\mathbf{K}_g}
\newcommand{\J}{\mathbf{\mathcal{J}}}
\newcommand{\Jd}{\mathbf{\mathcal{J}}_\delta}

\DeclareMathOperator\supp{supp}

\DeclareMathOperator*{\osc}{osc}

\title[The two-phase Alt--Phillips problem for quasilinear operators]{The two-phase Alt--Phillips problem\\ for quasilinear operators}

\author[Y. Alamri]{Yousef Alamri}
\address{Applied Mathematics and Computational Sciences (AMCS), Computer, Electrical and Mathematical Sciences and Engineering Division (CEMSE), King Abdullah University of Science and Technology (KAUST), Thuwal, 23955-6900, Kingdom of Saudi Arabia}{} 
\email{yousef.alamri@kaust.edu.sa} 

\author[J.M. Urbano]{Jos\'{e} Miguel Urbano}
\address{Applied Mathematics and Computational Sciences (AMCS), Computer, Electrical and Mathematical Sciences and Engineering Division (CEMSE), King Abdullah University of Science and Technology (KAUST), Thuwal, 23955-6900, Kingdom of Saudi Arabia; and CMUC, Department of Mathematics, University of Coimbra, 3000-143 Coimbra, Portugal}
\email{miguel.urbano@kaust.edu.sa}

\begin{document}

\subjclass[2020]{Primary 35R35. Secondary 35A21, 35J70}




\keywords{Alt--Phillips functional; Free boundary problems; Hausdorff estimates; Degenerate and singular PDEs}

\begin{abstract}
We establish interior regularity and growth estimates for the sign-changing minimizers of the $p-$singular or $p-$degenerate quasilinear Alt--Phillips functional throughout the full range of $1<p<\infty$ and of the nonlinearity power $0<\gamma<p$. As a consequence, we show that the nonbranching hypersurface is locally $C^{1,\alpha}$ rectifiable. In addition, we obtain local density estimates, from which we deduce the local finiteness of the $\big(N-1+\gamma(p-1)/(p-\gamma)\big)-$dimensional Hausdorff measure of the remaining free boundaries for a restricted range of $\gamma$.
\end{abstract}

\date{\today}

\maketitle

\section{Introduction} \label{Intro}
Let $\Omega$ be an open and bounded subset of $\R^N$ with a Lipschitz boundary. We study local minimizers of the functional
\begin{equation} \label{J}
    \J[u]:= \int_\Omega  \dfrac{|\nabla u|^p}{p} + F_\gamma(u) 
  dx,  \quad \quad 1 < p <\infty ,
\end{equation}
with the continuous potential 
\begin{align} \label{Fg}
    F_\gamma(u) &:=  \lambda_+ (u_+)^\gamma + \lambda_- (u_-)^\gamma, \quad \  0< \gamma < p,
\end{align}
where $\lambda_+,\lambda_->0$ and the usual convention $u_\pm := \max\{\pm u,0\}$ is used. The minimization is over the admissible set of functions
\begin{equation} \label{Kg}
    \Kg(\Omega) := \left\{ u \in W^{1,p}(\Omega): u-g \in W_0^{1,p}(\Omega)\right\},
\end{equation}
for a given $g \in W^{1,p}(\Omega)$. The first term in \eqref{J} is minimized in $\Omega$ by a $p$-harmonic function satisfying $u = g$ on $\partial \Omega$. The value of such a minimizer in $\Omega$ is influenced by the sign of $g$, as well as the characteristics of the potential term $F_\gamma(u)$. If no sign condition is imposed on $g$, a minimizer may develop positive, negative, and zero phases in $\Omega$, namely:
\[
\{u>0\}:=\{x\in\Omega:\,u(x)>0\},\qquad
\{u<0\}:=\{x\in\Omega:\,u(x)<0\},
\]
and
\[
\{u=0\}:=\{x\in\Omega:\,u(x)=0\},
\]
which give rise to the \emph{free boundaries}
\begin{equation}\label{All-FB}
    \Gamma:=\big(\partial\{u>0\}\cup\partial\{u<0\}\big)\cap\Omega,
\end{equation}
that are located at the interface among these phases and are not known \textit{a priori}. Pending continuity of minimizers, the noncoincidence sets $\{u>0\}$ and $\{u<0\}$ are open relative to $\Omega$ and thus \eqref{All-FB} are the boundaries of $N$-dimensional sets. 

The value of $\gamma$ in \eqref{Fg} plays a crucial role in determining the properties of minimizers and the associated free boundaries. For one, the functional \eqref{J} loses convexity once $\gamma < 1$, and hence there is no guarantee for the uniqueness of minimizers in this range. Additionally, if one examines the Euler-Lagrange equation verified by any minimizer of $\J[\cdot]$, that is,
\begin{equation} \label{EL-1}
    \Delta_p u = F_\gamma'(u) \quad \text{in } \mathcal{D}'\big((\{u>0\}\cup\{u<0\})\cap\Omega\big).
\end{equation}
where 
\begin{equation} \label{RHS-F}
    F_\gamma'(u) = \gamma \left[\lambda_+ u^{\gamma-1} \chi_{\{u>0\}} - \lambda_- (-u)^{\gamma-1}\chi_{\{u<0\}}\right],
\end{equation}
one immediately notices the singularity in the equation along the free boundaries if $\gamma <1$, regardless of the sign of $u$. Yet, $F^{\prime}_\gamma(u)$ is bounded for all $\gamma \geq 1$ provided $u$ is so. In \eqref{EL-1}, $\Delta_pu := \text{div}(|\nabla u|^{p-2} \nabla u)$ is the well-known $p$-Laplacian, a singular operator in the range $1 < p < 2$, linear if $p=2$, and degenerate in the range $2<p<\infty$. 

If $u$ is a minimizer of \eqref{J}, the free boundaries \eqref{All-FB} can be decomposed (formally at the moment) into two complementary sets based on whether $\nabla u = 0$ or $\nabla u \neq 0$. Throughout, we shall denote the former portion by  
\begin{equation} \label{FB0}
    \Gamma_0 := \Gamma \cap \{ \nabla u = 0 \}. 
\end{equation}

Observe that the complement of $\Gamma_0$ in $\Gamma$ is empty if the minimizer of $\J[\cdot]$ has a sign (in which case the problem is posed in the one-phase setting, where automatically $u = \nabla u = 0$ on the free boundary) or the point of consideration belongs to the \emph{one-phase} subset of \eqref{All-FB} 
\[ \Big[\big( \partial \{u>0\} \backslash \partial \{u<0\} \big) \cup \big( \partial \{u<0\} \backslash \partial \{u>0\} \big) \Big] \cap \Omega.\]  

On the other hand, the \emph{two-phase} subset of \eqref{All-FB} associated with sign-changing minimizers of $\J[\cdot]$ is
\begin{equation} \label{FB2}
    \left( \partial \{u>0\} \cap \partial \{u<0\} \right) \cap \Omega,
\end{equation}
at which the gradient may or may not vanish. More precisely, the subset of \eqref{FB2} 
\begin{equation} \label{FB20}
    \left( \partial \{u>0\} \cap \partial \{u<0\} \right) \cap \{\nabla u = 0 \} \cap \Omega,
\end{equation}
is the \emph{branching} two-phase free boundary which represents prototypically the intersection of \eqref{FB2} with the set $ \overline{\text{Int}\{u=0\}}$. We stress that $\eqref{FB20}$ is contained in \eqref{FB0}. The complement of \eqref{FB20} in \eqref{FB2} is the subset  
\begin{equation} \label{FB2N}
    \Gamma_*:=\left( \partial \{u>0\} \cap \partial \{u<0\} \right) \cap \{\nabla u \neq 0 \} \cap \Omega,
\end{equation}
which is known as the \emph{nonbranching} two-phase free boundary. Such a subset is open relative to $\Gamma$ and may have a neighborhood intersecting $\{u =0\}$ whose $N$-dimensional Lebesgue measure is trivial.

The one-phase version (i.e., nonnegative minimizer) of the minimization problem \eqref{J}-\eqref{Fg} with $p=2$ reduces to the classical Alt--Phillips problem, which was investigated in a series of works \cite{phillips1983minimization,phillips1983hausdoff,phillips1986free}. The particular case of $\gamma =1$, with the linear Laplacian, corresponds to the classical obstacle problem \cite{brezis1974smoothness,caffarelli1977regularity,caffarelli1998obstacle}, which was also studied in the two-phase setting in \cite{uraltseva2001two,shahgholian2004global,shahgholian2006two}. The free boundaries for the two-phase linear problem with $ 1< \gamma < 2$ were analyzed in \cite{fotouhi2017semilinear} and for $0< \gamma < 1$, in dimension two, in \cite{lindgren2008regularity}. A classification of the blowup limits for the planar problem was carried out in \cite{dipierro2023classification}.

In the setting of quasilinear degenerate operators (i.e., $p > 2$), the regularity properties of the solution and the associated free boundary in the one-phase obstacle problem were investigated in \cite{andersson2015optimal,lee2003hausdorff}, whereas those of the two-phase problem in \cite{edquist2009two}, under the restriction that $p$ is close to $2$. As to the range $0 < \gamma < 1$, the interior regularity of nonnegative minimizers was established in \cite{araujo2023sharp}, and for sign-changing minimizers in \cite{leitao2015regularity} in slightly different settings (see also \cite{CdP}, again for $p \sim 2$). The local finiteness of the $(N-1)$-dimensional Hausdorff measure of the free boundary for the one-phase problem in the latter range of $\gamma$ was recently established in \cite{araujo2024geometric}. A treatment of the obstacle one-phase problem in the singular range $1<p<2$ can be found in \cite{karp2000porosity, andersson2015optimal}. The two-phase Bernoulli problem, corresponding to $\gamma = 0$, was studied in \cite{dipierro2018stratification} and a variable-coefficient, one-phase minimization problem of this type in \cite{leitao2015regularity-disc}.

The analysis of the minimization problem of \eqref{J}-\eqref{Fg} poses several challenges. First, the sign-changing nature of a given minimizer precludes identifying explicit interior regularity exponents and, as far as establishing the growth rate away from $\Gamma_0$ is concerned, the use of maximum principles. One is ultimately forced to invoke and compare with the (generally unknown) H\"older exponent of the gradient of $p$-harmonic functions, which amounts to restricting the range of $\gamma$ for which explicit interior regularity or growth rate exponents are ascertained. The second difficulty is the discrepancy in the growth rate of minimizers away from different segments of the free boundaries in the two-phase setting. More precisely, whereas a \emph{linear} growth is expected away from $\Gamma_*$ owing to the interior regularity, the scaling of the problem suggests that a minimizer grows \emph{superlinearly} away from $\Gamma_0$. Such a discrepancy limits the applicability of pointwise gradient estimates (employed, for instance, in \cite{araujo2024geometric}) to neighborhoods outside $\overline{\Gamma}_*$, as they require the scaling-compatible superlinear growth and decay of minimizers. Finally, known approaches for estimating the $(N-1)$-dimensional Hausdorff measure of the free boundaries that rely on differentiating both sides of the associated Euler-Lagrange equation and employing higher derivative bounds (\textit{à la} \cite{lee2003hausdorff}, \cite{edquist2009two}), or estimating the perimeter (e.g., \cite{leitao2015regularity-disc}) are suitable for the special values of $\gamma=1$ or $\gamma=0$, respectively.

\subsection{Main results} In this work, we investigate the minimization problem \eqref{J}-\eqref{Fg} in the entire range of the quasilinear operator $(1<p<\infty)$ and the strength of the nonlinearity $(0 < \gamma <p)$, under no sign assumption on the minimizer. More precisely, we establish interior and pointwise regularity estimates of minimizers as well as local geometric and measure-theoretic properties of the induced free boundaries. Our main results concerning minimizers of $\J[\cdot]$ are the following:

\begin{itemize} 

\item Any local minimizer $u$ of $\J[\cdot]$ is of class $C_{\textnormal{loc}}^{1,\alpha} (\Omega)$, for some $\alpha \in(0,1)$, and satisfies, in the sense of distributions, 
\[\Delta_p u = F'_\gamma(u) \qquad \text{in }\ \{|u|>0\} \cap \Omega,\]
with $F'_\gamma(u) \in L^1_{\textnormal{loc}}(\Omega)$.
\medskip

\item If $x$ is sufficiently close to $\Gamma_0$, then 
\[ |u(x)| \leq C \, \textnormal{dist}\big(x,\Gamma_0\big)^{1+ \min \left\{ \alpha_p - \epsilon, \frac{\gamma}{p-\gamma} \right\}},\]
for any $0< \epsilon \ll 1$. Here, $\alpha_p \in (0,1)$ denotes the optimal H\"older exponent for the gradient of $p$-harmonic functions, and the constant $C >0$ depends only on $\lambda _\pm,N,p,\gamma,$ and $ \|u\|_{L^\infty}$.

\end{itemize}

\textit{A priori} bounds on minimizers are established in \Cref{exist-sec}. The proof of their interior $C^{1,\alpha}$ regularity is based on variational techniques relying mainly on comparison with a $p$-harmonic replacement, as delineated in \Cref{inter-reg-sec}. Such interior regularity is employed in \Cref{growth-sec} to establish the growth rate of minimizers away from the zero-gradient free boundary, $\Gamma_0$, which is dictated by the displayed competition between the optimal regularity for $p$-harmonic functions and the scaling of the problem. Optimality of such a growth rate is, therefore, certified for a restricted range of $\gamma$ that depends on $p$ and $\alpha_p$; see \Cref{restricted}. 

Next, we turn our attention to the free boundaries and establish in \Cref{FB-sec} the following results for any local minimizer $u$ and open, compactly contained subset $\Omega' \Subset \Omega$:  

\begin{itemize} 

\item The set $\Gamma_* \cap \Omega'$ is countably $C^{1,\alpha}$ rectifiable for some $
\alpha \in (0,1)$, and
\[\mathcal{H}^{s}(\Gamma_* \cap \Omega') = 0 \qquad \textnormal{for every} \qquad s > N-1.\]
\item If $ 0 < \gamma < \min\{1,p \alpha_p/(1+\alpha_p) \}$, then 
\[\mathcal{L}^N(\Gamma \cap \Omega') = 0,\] 
and 
\[\mathcal{H}^{N-1+\gamma(p-1)/(p-\gamma)}(\Gamma_0 \cap \Omega') < \infty.\]
\end{itemize}

The local rectifiability and trivial Lebesgue measure of the nonbranching free boundaries, $\Gamma_*$, are a direct consequence of the interior regularity of $u$ and an application of the implicit function theorem; cf. \Cref{zero-lebesgue-non} and \Cref{hausdorff-nonbranch}. On the other hand, the zero local Lebesgue measure of $\Gamma_0$ follows from its local porosity established in \Cref{poro-subsec}. In \Cref{density-subsec}, we develop upper and lower Lebesgue density estimates for $\Gamma_0$. These density estimates, along with a covering argument, are employed to establish the finiteness of the local Hausdorff measure of $\Gamma_0$ in \Cref{hausdorff-sec}.   

\subsection{Notation} We denote by $C$ a positive constant that may vary in numerical value from one occurrence to the next. Occasionally, we use $C(B)$ to emphasize the dependence of $C$ on some quantity $B$. $\R^N$ is equipped with the Euclidean inner product $x\cdot y$ and the induced norm $|x|$. For $x \in \mathbb{R}^N$ and $Y \subset \R^N$, we define the Euclidean distance from $x$ to $Y$ by $\text{dist}(x,Y) := \inf\{|x-y|: y \in Y \}$. Given a set $A$, $\chi_A$ denotes the characteristic function of $A$. The notation $e_i$ signifies the standard $N$-dimensional basis vector with its $i$th entry being one, and $\nu$ an outward unit normal vector with respect to a given surface. By $\mathcal{L}^N$, we shall denote the $N$-dimensional Lebesgue measure and by $\mathcal{H}^s$ the $s$-dimensional Hausdorff measure. The shorthand $\dim_\mathcal{H}(A)$ stands for the Hausdorff dimension of a set $A$ and Card $A$ for its cardinality. $B_r(x_0)$ and $\overline{B}_r(x_0)$ denote, respectively, open and closed, $N$-dimensional, balls with radius $r$ and center $x_0$; their Lebesgue measure is $C(N)r^N$. The shorthand $B_r := B_r(0)$ is frequently used. We will use the notation $(f)_r$ to denote the average of the function $f$ in $B_r$, i.e., $(f)_r:= (\mathcal{L}^N(B_r))^{-1} \int_{B_r} f(x) dx$. Throughout, $p$-(sub/super) harmonicity is understood in the distributional sense. Moreover, $\alpha_p$ stands for the optimal H\"older exponent for the gradient of $p$-harmonic functions. We fix the recurrent quantity $\tau:= \gamma/(p-\gamma)$ for later use. 
 
\section{\textit{A priori} bounds} \label{exist-sec}

By a \emph{minimizer} of $\J[\cdot]$, we mean a function $u \in \Kg(\Omega)$ such that
\begin{equation} \label{Glob-min}
    \J[u] \leq \J[v], \quad \text{for all} \ v \in \Kg(\Omega). 
\end{equation}
\noindent
A minimizer in the sense of \eqref{Glob-min} is automatically a \emph{local minimizer} of $\J[\cdot]$:

\begin{definition} \label{local-min}
      We say $u \in W^{1,p}(\Omega)$ is a local minimizer of $\J[\cdot]$ if $\J[u] \leq \J[u + \varepsilon \varphi]$, for every $\varphi \in W_0^{1,p}(\Omega')$, $\Omega' \Subset \Omega$, and $\varepsilon\in \R$. Alternatively, $\J[u] \leq \J[v]$, for every $v \in W^{1,p}(\Omega')$ satisfying $u - v \in W_0^{1,p}(\Omega')$.
\end{definition}

The existence of at least one minimizer of $\J[\cdot]$ over $\Kg(\Omega)$ is ensured by the direct method of the Calculus of Variations (see \cite[Theorem 3.1]{leitao2015regularity}). An \textit{a priori} bound for such a minimizer in the $W^{1,p}$-norm is obtained in the next proposition.  
 
\begin{proposition} \label{W1p-bound}
    If $u$ is a minimizer of $\J[\cdot]$ over $\Kg(\Omega)$, then   
    \[\|u\|_{W^{1,p}(\Omega)} \leq C,\]
    where the constant $C>0$ depends on $p,\gamma,\lambda_\pm,\Omega$, and $\|g\|_{W^{1,p}(\Omega)}.$ 
\end{proposition}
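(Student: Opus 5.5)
The plan is to compare $u$ with the admissible competitor $g$ itself, and then use Poincaré's inequality on $u-g\in W^{1,p}_0(\Omega)$. Since $g\in\Kg(\Omega)$, minimality gives
\[
\int_\Omega \frac{|\nabla u|^p}{p}\,dx \;\le\; \J[u]\;\le\;\J[g]=\int_\Omega \frac{|\nabla g|^p}{p}+\lambda_+(g_+)^\gamma+\lambda_-(g_-)^\gamma\,dx,
\]
where we used $F_\gamma\ge 0$ to drop the potential term on the left. For the right-hand side: because $0<\gamma<p$ and $\Omega$ is bounded, Hölder's inequality with exponents $p/\gamma$ and $p/(p-\gamma)$ gives
\[
\int_\Omega |g|^\gamma\,dx \le \mathcal{L}^N(\Omega)^{1-\gamma/p}\,\|g\|_{L^p(\Omega)}^{\gamma}.
\]
Hence $\|\nabla u\|_{L^p(\Omega)}^p\le C\big(\|g\|_{W^{1,p}}^p+\max\{\lambda_+,\lambda_-\}\,\mathcal{L}^N(\Omega)^{1-\gamma/p}\|g\|_{W^{1,p}}^\gamma\big)$. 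This depends only on the listed quantities.

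Next, control the $L^p$ norm of $u$ itself. Since $u-g\in W^{1,p}_0(\Omega)$ and $\Omega$ is bounded, the Poincaré inequality gives
\[
\|u-g\|_{L^p(\Omega)}\le C(N,p,\Omega)\,\|\nabla(u-g)\|_{L^p(\Omega)}\le C\big(\|\nabla u\|_{L^p}+\|\nabla g\|_{L^p}\big).
\]
By the triangle inequality, $\|u\|_{L^p}\le \|g\|_{L^p}+\|u-g\|_{L^p}$. Combining this with the gradient bound from the first step yields $\|u\|_{W^{1,p}(\Omega)}\le C(p,\gamma,\lambda_\pm,\Omega,\|g\|_{W^{1,p}})$.

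No step is a real obstacle; the only point needing care is the potential term when $\gamma<1$, where $F_\gamma$ is nonconvex. This does not matter here, since we only need $F_\gamma\ge0$ on the left and integrability of $|g|^\gamma$ on the right, and the latter is handled by the Hölder step. If one prefers not to use the competitor $g$ directly, an alternative is to absorb: apply Hölder and Young to $\int |u|^\gamma$ with a small parameter, exploiting $\gamma<p$, and absorb it into $\|\nabla u\|^p_{L^p}$ via Poincaré. Comparing with $g$ is simpler, though, and I would use it.
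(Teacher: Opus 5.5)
Your proposal is correct and follows essentially the same route as the paper. You compare with the admissible competitor $g$, drop $F_\gamma(u)\geq 0$, bound $\int_\Omega (g_\pm)^\gamma\,dx$ by H\"older with exponents $p/\gamma$ and $p/(p-\gamma)$, and recover $\|u\|_{L^p(\Omega)}$ via Poincar\'e's inequality for $u-g\in W^{1,p}_0(\Omega)$ plus the triangle inequality.
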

\begin{proof}
    By the minimality of $u$ in $\Kg(\Omega)$, we have that $\J[u] \leq \J[g]$. Notice, furthermore, that $F_\gamma(u) \geq 0$ and
    \[\int_\Omega (g_\pm)^\gamma dx \leq \mathcal{L}^N(\Omega)^{1-\frac{\gamma}{p}} \|g\|^\gamma_{L^p(\Omega)},\]
    by an application of H\"older's inequality. It follows that 
    \begin{equation} \label{DuLp}
        \|\nabla u\|^p_{L^p(\Omega)} \leq \|\nabla g\|^p_{L^p(\Omega)} + p(\lambda_+ + \lambda_-)\mathcal{L}^N(\Omega)^{1-\frac{\gamma}{p}} \|g\|^\gamma_{L^p(\Omega)}. 
    \end{equation}
    As $u - g \in W_0^{1,p}(\Omega)$, we have, using Poincar\'e's inequality,
    \begin{align} \label{uLp}
        \|u\|_{L^p(\Omega)} &\leq \|u-g\|_{L^p(\Omega)} + \|g\|_{L^p(\Omega)} \nn \\
        & \leq C \|\nabla u-\nabla g\|_{L^p(\Omega)} + \|g\|_{L^p(\Omega)} \nn \\
        & \leq C \|\nabla u\|_{L^p(\Omega)} + (1+C)\|g\|_{W^{1,p}(\Omega)},
    \end{align}
    for some $C = C(p,\Omega)>0$. Combining \eqref{DuLp} and \eqref{uLp} completes the proof.
\end{proof}

One can also demonstrate a local uniform bound for any local minimizer of $\J[\cdot]$, as shown in the following proposition.

 \begin{proposition} \label{uniform-bound}
    If $u$ is a local minimizer of $\J[\cdot]$, then for every $\Omega' \Subset \Omega$,
    \begin{equation*}
        \|u\|_{L^\infty(\Omega')} \leq C_0,
    \end{equation*}
    where $C_0>0$ depends on $N,p,\|u\|_{L^p(\Omega')},$ and $\textnormal{dist}(\Omega',\partial\Omega)$. 
\end{proposition}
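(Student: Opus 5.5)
We will show that $u_+$ and $u_-$ are each (weak) $p$-subharmonic on $\Omega$, and then apply the classical local $L^\infty$--$L^p$ estimate for nonnegative $p$-subsolutions, obtained by De Giorgi or Moser iteration. This estimate gives, for $B_{2r}(x_0)\subset\Omega$,
\[
\sup_{B_{r}(x_0)} w \le C(N,p)\Big(\fint_{B_{2r}(x_0)} w^p\,dx\Big)^{1/p}.
\]
Taking $r=\tfrac12\textnormal{dist}(\Omega',\partial\Omega)$ and covering $\Omega'$ by such balls, we get $\|u_\pm\|_{L^\infty(\Omega')}\le C(N,p)\,r^{-N/p}\|u\|_{L^p}$. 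This fixes the dependence of $C_0$ as stated, with the $L^p$ norm taken on a slightly enlarged set; if needed, one first replaces $\Omega'$ by an intermediate domain.

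For the subharmonicity we argue directly from minimality rather than from the Euler--Lagrange equation \eqref{EL-1}, since \eqref{EL-1} is only known away from the zero set. Fix $\varphi\in C_c^\infty(\Omega)$ with $\varphi\ge0$ and $\varepsilon>0$, and consider the competitor $v=\min\{u,\,u_+ - \varepsilon\varphi\}$ on the positive side. Equivalently, $v=u-\varepsilon\min\{\varphi, u_+/\varepsilon\}$, which decreases $u$ only where $u>0$ and never makes it negative. Since $v$ agrees with $u$ outside $\supp\varphi$, it is admissible in Definition~\ref{local-min}. Because $F_\gamma$ is nondecreasing in $u_+$ and $0\le v_+\le u_+$ with $v_-=u_-$, we have $F_\gamma(v)\le F_\gamma(u)$. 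Minimality therefore gives $\int|\nabla u|^p\le\int|\nabla v|^p$.

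In other words, $u_+$ is not beaten in Dirichlet $p$-energy by the truncation $\min\{u_+,\,u_+-\varepsilon\varphi\}$ of itself below. This is precisely the variational characterization of $p$-subharmonicity: the $p$-energy of $u_+$ does not decrease under such downward perturbations. Dividing by $\varepsilon$, letting $\varepsilon\to0$, and using the convexity of $\xi\mapsto|\xi|^p/p$ gives $\int|\nabla u_+|^{p-2}\nabla u_+\cdot\nabla\varphi\le0$. Here one must check that the energies on $\{u\le0\}$ cancel. They do: on $\{u\le 0\}$ we have $v=u$, and on $\{u>0\}$ the energy is that of $u_+$ versus $(u_+-\varepsilon\varphi)_+$. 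The set $\{0<u_+<\varepsilon\varphi\}$ shrinks as $\varepsilon\to0$ and contributes $o(\varepsilon)$ in the standard way, since its contribution is bounded by $\int_{\{0<u<\varepsilon\varphi\}}|\nabla u|^p$, which tends to $0$ with the right order after using convexity. The case of $u_-$ is symmetric, using $v=\max\{u,\,-u_-+\varepsilon\varphi\}$.

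The main obstacle is making this limiting argument clean with $u$ only in $W^{1,p}$. If it becomes delicate, we will use the alternative route of a De Giorgi iteration directly on $u$. Testing minimality with $v=u-\eta^p(u-k)_+$ for levels $k\ge0$ and a cutoff $\eta$ again gives $F_\gamma(v)\le F_\gamma(u)$, because $v$ stays between $k$ and $u$ where modified. This yields the Caccioppoli inequality
\[
\int_{B_\rho}|\nabla(u-k)_+|^p\le \frac{C}{(R-\rho)^p}\int_{B_R}(u-k)_+^p,
\]
with no contribution from the potential. By symmetry the same holds for $(u+k)_-$, so $u$ belongs to the De Giorgi class. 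The standard iteration then gives the sup bound with the stated dependencies.
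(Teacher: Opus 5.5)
Your proof is correct once two slips (below) are fixed. Your main route differs from the paper's; your fallback is essentially the paper's proof.

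\textbf{Comparison.} The paper never uses subharmonicity of the phases. It tests minimality with $w=u-\zeta(u-k)_+$ for every level $k\ge0$ and notes $F_\gamma(w)\le F_\gamma(u)$. A hole-filling lemma then gives the Caccioppoli inequality for $(u-k)_+$, i.e.\ $u$ lies in a De Giorgi class. Finally it runs De Giorgi's iteration and treats $-u$ symmetrically. Your choice $\eta^p$ in place of $\zeta$ removes the hole-filling step, since convexity of $|\cdot|^p$ gives $|\nabla v|^p\le(1-\eta^p)|\nabla u|^p+p^p(u-k)^p|\nabla\eta|^p$ on $\{u>k\}$. Your primary route uses the same structural input: each phase can be truncated downward without increasing the potential. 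You turn this into the statement that $u_+$ and $u_-$ are weakly $p$-subharmonic in all of $\Omega$, not only in their positivity sets as in Theorem~\ref{Dist-sol}, and then quote the classical sup estimate for nonnegative subsolutions. This makes the bound a black box with constant $C(N,p)\,\textnormal{dist}(\Omega',\partial\Omega)^{-N/p}\|u\|_{L^p}$. The subharmonicity also pays off later: once $u$ is continuous, $\Delta_p u_\pm$ is a nonnegative Radon measure whose restriction to $\{u_\pm>0\}$ is $\gamma\lambda_\pm u_\pm^{\gamma-1}$, which gives Proposition~\ref{Loc-integ} at once. The paper's route uses only energy comparisons and no first variation, so it would carry over unchanged to quasi-minimizers.

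\textbf{Repairs to the variational step.} First, $\min\{u,u_+-\varepsilon\varphi\}$ is not equal to $u-\min\{\varepsilon\varphi,u_+\}$. Where $-\varepsilon\varphi<u\le0$ the former equals $-\varepsilon\varphi$, which increases $u_-$ and hence $F_\gamma$. Only the second expression works: it equals $u$ on $\{u\le0\}$ and $(u-\varepsilon\varphi)_+$ on $\{u>0\}$. Likewise, the truncation of $u_+$ is $(u_+-\varepsilon\varphi)_+$, not $\min\{u_+,u_+-\varepsilon\varphi\}$.

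Second, for a general $W^{1,p}$ function the contribution of $\{0<u_+\le\varepsilon\varphi\}$ need not be $o(\varepsilon)$; for example, take $u=(x_1)_+^{\beta}$ with $1-1/p<\beta<1$. So your justification there does not hold. It is also not needed, because the term has the favorable sign:
\[
\int_{\{u_+>\varepsilon\varphi\}}\big(|\nabla u_+-\varepsilon\nabla\varphi|^p-|\nabla u_+|^p\big)\,dx\;\ge\;\int_{\{0<u_+\le\varepsilon\varphi\}}|\nabla u_+|^p\,dx\;\ge\;0 .
\]
Drop the middle term, divide by $\varepsilon$, and let $\varepsilon\to0$ by dominated convergence. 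For $\varepsilon\le1$ the difference quotients are bounded by $p(|\nabla u_+|+|\nabla\varphi|)^{p-1}|\nabla\varphi|$, and $\chi_{\{u_+>\varepsilon\varphi\}}\to\chi_{\{u_+>0\}}$ pointwise. Since $\nabla u_+=0$ a.e.\ on $\{u_+=0\}$, the limit gives $\int_\Omega|\nabla u_+|^{p-2}\nabla u_+\cdot\nabla\varphi\,dx\le0$.
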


\begin{proof}
    It suffices to consider the case $p \leq N$, as one can obtain the bound by Morrey's inequality and \Cref{W1p-bound} otherwise. 
    \\
    For $0 < r < R$, consider $B_r \subset B_R \Subset \Omega$ and a cutoff function $\zeta \in C_0^\infty(B_R)$ such that $0 \leq \zeta \leq   1$ in $B_R$, $\zeta = 1$ in $B_r$ and $|\nabla \zeta| \leq (R-r)^{-1}.$ In addition, for any $\rho>0$ and $k\geq0$, define the set 
    \[A_{k,\rho}:= \{ x \in B_\rho : u(x) > k\}.\]
    Consider the competitor 
    \[w:=u-\zeta (u-k)_+  \leq u,\]
    that differs from $u$ at most in $A_{k,R}$. The local optimality of $u$ yields
    \begin{equation} \label{optim-u-w}
        \int_{A_{k,R}} |\nabla u|^p dx \leq \int_{A_{k,R}} |\nabla w|^p dx + p \int_{A_{k,R}} F_\gamma(w) - F_\gamma(u) dx.
    \end{equation}
    In the set $A_{k,R}$, $w = (1-\zeta)u + \zeta k$ and hence $\nabla w = (1-\zeta) \nabla u - (u-k) \nabla \zeta$. Using the inequality in \Cref{sum-p-harm}, we may estimate 
    \begin{equation*}
        |\nabla w|^p \leq c(p)(1-\zeta)^p|\nabla u|^p + c(p)|\nabla \zeta|^p (u-k)^p.
    \end{equation*}
    Moreover, on the same set,  
    \begin{align*}
        F_\gamma(w) - F_\gamma(u) &= \lambda_+ ( (w_+)^\gamma -(u_+)^\gamma) \\
        &= \lambda_+  \left( \big( (1-\zeta)u + \zeta k \big)^\gamma -u^\gamma \right) \\ 
        &\leq \lambda_+  \left( \big( (1-\zeta)u + \zeta u \big)^\gamma -u^\gamma \right)=0.
    \end{align*}
    In view of the aforementioned estimates and the properties of $\zeta$,  \eqref{optim-u-w} gives 
    \begin{equation} \label{optim-u-w-2}
    \int_{A_{k,r}} |\nabla u|^p dx \leq c(p)\int_{A_{k,R}\backslash A_{k,r}} |\nabla u|^p dx + \dfrac{c(p)}{(R-r)^p} \int_{A_{k,R}} (u-k)^p dx. 
    \end{equation}
    Adding the quantity 
    \[c(p)\int_{A_{k,r}} |\nabla u|^p dx,\]
    to both sides of \eqref{optim-u-w-2} and defining $\theta:= c(p)/(1+c(p)) < 1$, we obtain
     \begin{equation}\label{optim-u-w-3}
    \int_{A_{k,r}} |\nabla u|^p dx \leq \theta \int_{A_{k,R}} |\nabla u|^p dx + \dfrac{C}{(R-r)^p} \int_{A_{k,R}} (u-k)^p dx. 
    \end{equation}
     \Cref{W1p-bound} allows us to  apply \Cref{hole-filling} to \eqref{optim-u-w-3}, whereby we conclude
    \begin{equation} \label{caccip-ineq}
        \int_{B_r} |\nabla (u-k)_+|^p dx \leq \dfrac{C}{(R-r)^p} \int_{B_R} (u-k)_+^p dx. 
    \end{equation}
    With the Caccioppoli inequality \eqref{caccip-ineq} for $u$ and the \textit{a priori} bound given by \Cref{W1p-bound} established, one can apply classical De Giorgi's iteration (e.g., \cite[Theorem 2.1 of Ch. 10]{dibenedetto2023partial}) to conclude that $u$ is essentially bounded above in $B_r$. To obtain a lower bound, one simply notices that $\underline{u}:= -u$ minimizes 
    \[\underline{\J}[v]: =\int_{\Omega} \dfrac{|\nabla v|^p}{p} + \lambda_+ (v_-)^\gamma + \lambda_- (v_+)^\gamma dx.\]
  Therefore, the exact preceding procedure can be used to derive an upper bound for $\underline{u}$ in $B_r$. This completes the proof.  
\end{proof}

We conclude with the following integral growth estimate for the gradient.
 
\begin{proposition} \label{Morrey}
    Suppose $u$ is a local minimizer of $\J[\cdot]$ and $B_{2R} \Subset \Omega$ for some $R>0$. Then
    \begin{equation*}
        \int_{B_R} |\nabla u|^p \leq CR^{N-p},
    \end{equation*}
    for a constant $C>0$ that depends on $N,p,$ and $\|u\|_{L^p(B_{2R})}$.
\end{proposition}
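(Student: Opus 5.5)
We compare $u$ with a competitor that agrees with $u$ outside $B_{2R}$ and is cheap inside $B_R$; the bound $CR^{N-p}$ then comes from the cut-off gradient and the $L^\infty$ bound of \Cref{uniform-bound}. Fix $\zeta \in C_0^\infty(B_{2R})$ with $0\le \zeta\le 1$, $\zeta\equiv 1$ on $B_R$ and $|\nabla\zeta|\le 2/R$. As competitor take $w := (1-\zeta^p)u$, or more simply $w:=u-\zeta u$. Then $u-w\in W_0^{1,p}(B_{2R})$, so \Cref{local-min} gives $\J[u]\le\J[w]$ on $B_{2R}$.

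We then estimate the two parts of $\J[w]$. Since $|w_\pm|\le |u_\pm|$ pointwise and $F_\gamma$ is monotone in $|u|$ on each phase, $F_\gamma(w)\le F_\gamma(u)$, so the potential terms can be discarded after comparing: $\int_{B_{2R}}|\nabla u|^p \le \int_{B_{2R}}|\nabla w|^p$. Even without monotonicity, $\int_{B_{2R}} F_\gamma \le C\|u\|_{L^\infty}^\gamma R^N \le C R^{N-p}$ for $R\le 1$, which is harmless. Next, $\nabla w=(1-\zeta)\nabla u - u\nabla\zeta$, and the elementary inequality of \Cref{sum-p-harm} gives
\[
|\nabla w|^p\le c(p)(1-\zeta)^p|\nabla u|^p + c(p)|u|^p|\nabla\zeta|^p .
\]
Since $1-\zeta=0$ on $B_R$, this yields
\[
\int_{B_R}|\nabla u|^p \le c(p)\int_{B_{2R}\setminus B_R}|\nabla u|^p + \frac{C}{R^p}\int_{B_{2R}}|u|^p .
\]
The main point is handling the first term on the right, which is on an annulus and not small a priori. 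We do this exactly as in the proof of \Cref{uniform-bound}, by hole-filling. Running the argument with general radii $r<\rho\le 2R$ (cut-off between $B_r$ and $B_\rho$) and adding $c(p)\int_{B_r}|\nabla u|^p$ to both sides gives
\[
\int_{B_r}|\nabla u|^p\le \theta\int_{B_\rho}|\nabla u|^p + \frac{C}{(\rho-r)^p}\int_{B_{2R}}|u|^p ,\qquad \theta=\tfrac{c(p)}{1+c(p)}<1 .
\]
\Cref{hole-filling} then removes the $\theta$-term, giving $\int_{B_R}|\nabla u|^p\le C R^{-p}\int_{B_{2R}}|u|^p$.

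Finally, we bound the $L^p$ norm of $u$ by its sup. By \Cref{uniform-bound}, applied on a slightly larger compact set containing $B_{2R}$, we have $\|u\|_{L^\infty(B_{2R})}\le C_0$, so $\int_{B_{2R}}|u|^p\le C_0^p\,C(N)R^N$. This gives $\int_{B_R}|\nabla u|^p\le CR^{N-p}$. The only delicate point is the dependence of the constants. $C_0$ depends on $\|u\|_{L^p}$ on a neighbourhood, so we would either invoke \Cref{uniform-bound} with $\Omega'=B_{2R}$ (using that $B_{2R}\Subset\Omega$) or accept that dependence. Where the potential term is kept, we restrict to $R\le1$, or absorb $\lambda_\pm$ and $\gamma$ into $C$.
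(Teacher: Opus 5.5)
Your proposal is correct and follows the paper's proof. You use the same competitor $(1-\zeta)u$, and you drop the potential because $F_\gamma((1-\zeta)u)=(1-\zeta)^\gamma F_\gamma(u)\le F_\gamma(u)$. You then hole-fill via \Cref{hole-filling}, and finally use the $L^\infty$ bound of \Cref{uniform-bound} to turn $R^{-p}\int_{B_{2R}}|u|^p$ into $CR^{N-p}$. Your version is slightly more careful than the written one. You state the hole-filling inequality for all intermediate radii $R\le r<\rho\le 2R$, which \Cref{hole-filling} actually requires. You also use $\|u\|_{L^\infty(B_{2R})}$ where the paper's final display, read literally, has $\|u\|_{L^p(B_{2R})}$.
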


\begin{proof}
    The reasoning is analogous to the proof of the previous proposition. We take a cutoff function $\zeta \in C_0^\infty(B_{2R})$ such that $\zeta = 1$ in $B_R$ and $|\nabla \zeta| \leq R^{-1}$. We set $v:= (1-\zeta)u$ and notice that $F_\gamma(v) = (1-\zeta)^\gamma F_\gamma(u)$. The nonnegativity of $F_\gamma(u)$, therefore, implies that $F_\gamma(v) - F_\gamma(u) \leq 0$ in $B_{2R}$. This, along with the local optimality of $u$, yields
    \begin{align*}
        \int_{B_{2R}} |\nabla u|^p dx \leq \int_{B_{2R}} |\nabla v|^p dx.
    \end{align*}
    Estimating both sides and using the properties of $\zeta$, we obtain 
    \begin{align*}
        \int_{B_{R}} |\nabla u|^p dx \leq c(p) \int_{B_{2R} \backslash B_{R}} |\nabla u|^p dx + c(p) \int_{B_{2R}}|\nabla \zeta|^p |u|^p dx.
    \end{align*}
    Adding  $c(p) \int_{ B_{R}} |\nabla u|^p dx$ to both sides, we deduce for some $\theta < 1$,
    \begin{align*}
        \int_{B_{R}} |\nabla u|^p dx \leq \theta \int_{B_{2R}} |\nabla u|^p dx + \dfrac{C}{R^p} \int_{B_{2R}} |u|^p dx.
    \end{align*}
    Taking into account \Cref{W1p-bound}, \Cref{hole-filling} leads to 
    \begin{equation*}
        \int_{B_R} |\nabla u|^p dx \leq \dfrac{C}{R^p} \|u\|^p_{L^p(B_{2R})} \mathcal{L}^N(B_{2R}) \leq CR^{N-p}.  \end{equation*}  
\end{proof}

\section{Interior regularity} \label{inter-reg-sec}
  
In this section, we establish local regularity for local minimizers of $\J[\cdot]$ and their gradients in a more general setting. More precisely, we consider $\J[\cdot]$ with a $\delta$-scaled potential, that is,
\begin{equation} \label{J-d}
    \Jd[u] := \int_\Omega  \dfrac{|\nabla u|^p}{p} + \delta F_\gamma(u) 
  dx, \ \ \ \ \delta \in [0,\delta_0],
\end{equation}
for a universal and fixed constant $\delta_0>0$ that depends on the data of the problem. We distinguish between the degenerate/linear regime $(2 \leq p < \infty)$ and the singular regime $(1 < p < 2)$. Aside from providing the minimal continuity required to derive the Euler-Lagrange equation and nondegeneracy estimates for minimizers of $\J[\cdot]$, the interior regularity for minimizers of the scaled functional \eqref{J-d} provides the necessary compactness to study the pointwise growth near the free boundaries. 

We begin in the next two subsections by establishing some scaling properties of $\J[\cdot]$ and developing preliminary estimates involving $p$-harmonic replacements.  

\subsection{Scaling invariance} \label{scaling-rmk}

A convenient property of the functional $\J[\cdot]$ is that it preserves minimizers under certain scalings. Assume $R, r>0$ are constants. For another constant $s>0$, the rescaling
\begin{equation*}
    u_s(x):= \dfrac{u(rx+z)}{s},\quad \textnormal{for} \quad x \in B_{R/r}(0), 
\end{equation*}
verifies
\begin{equation*}
    r^{N-p} s^{p} \int_{B_{R/r}(0)}  \dfrac{|\nabla u_s|^p}{p} + r^{p} s^{\gamma -p} F_\gamma(u_s) 
  dy =  \int_{B_{R}(z)}  \dfrac{|\nabla u|^p}{p} + F_\gamma(u) dx.
\end{equation*}
Therefore, if $u$ is a local minimizer of $\J[\cdot]$ in $B_R(z)$, then $u_s$ is a local minimizer of $\J_\delta[\cdot]$ in $B_{R/r}(0)$, with $\delta := r^p s^{\gamma-p}$.
In particular, 
\begin{equation*}
        u_r(x):= \dfrac{u(rx+z)}{r^{1+\tau}}, \quad \textnormal{for} \quad x \in B_{R/r}(0), 
\end{equation*}
satisfies 
\begin{equation*}
     r^{N+p\tau} \int_{B_{R/r}(0)}  \dfrac{|\nabla u_r|^p}{p} + F_\gamma(u_r) 
  dx = \int_{B_{R}(z)}  \dfrac{|\nabla u|^p}{p} + F_\gamma(u) dy.
\end{equation*}
Consequently, if $u$ is a local minimizer of $\J[\cdot]$ in $B_R(z)$, then $u_r$ is a local minimizer of $\J[\cdot]$ in $B_{R/r}(0)$.  

\subsection{$p$-Harmonic replacements and related estimates}

We recall the definition of $p$-harmonic replacements and derive some useful estimates for future use. For simplicity, in this subsection and the ones to come, we work with balls centered at the origin, $B_R := B_R (0)$.   

\begin{definition} We say that $\bar{u} \in W^{1,p}(B_R)$ is a $p$-harmonic replacement of a function $u \in W^{1,p}(B_R)$ if $\bar{u}$ is $p$-harmonic in $B_R$ and $u - \bar{u} \in W_0^{1,p}(B_R)$.
\end{definition}

Since $\bar{u}$ is $p$-harmonic, it is a minimizer of 
\begin{equation*}
    \int_{B_R} \dfrac{|\nabla v|^p}{p} dx,
\end{equation*}
and, as such, it satisfies (\cite[Theorem 2.7]{lindqvist2019notes})
\begin{equation} \label{ubar-leq-u}
        \int_{B_R} |\nabla \bar{u}|^p dx \leq \int_{B_R} |\nabla u|^p dx.
    \end{equation}
    
Moreover, by the maximum/minimum principle for $p$-harmonic functions, we have 
\begin{equation*}
        \osc_{B_R} \bar{u} \leq \osc_{\partial B_R} u,
    \end{equation*}
from which one straightforwardly deduces
\begin{equation}\label{pharmonic-osc-2}
    \sup_{B_R} |\bar{u}|^\gamma \leq \left(\sup_{\partial B_R} |u|\right)^\gamma \leq  \left(\sup_{ B_R} |u|\right)^\gamma,
\end{equation}
for any $\gamma>0$. The next lemma demonstrates an auxiliary inequality.

\begin{lemma} \label{two-cases-estimate-p-lemma}
Let $u \in W^{1,p}(B_{2R})$ and suppose $\bar{u} \in W^{1,p}(B_{2R})$ is its  $p$-harmonic replacement in $B_{2R}$. Then, there exist constants $C=C(N,p)>0$ such that
\begin{equation} \label{two-cases-p}
    \int_{B_{2R}} |\nabla u|^p - |\nabla {\bar{u}}|^p dx \geq  
\left\{
\begin{array}{lcl}
        C  \displaystyle \int_{B_{2R}}  \left( |\nabla u| + |\nabla {\bar{u}}| \right)^{p-2} |\nabla u - \nabla \bar{u}|^2 dx & \textnormal{if} & 1 < p \leq 2 \\
        \\
        C \displaystyle \int_{B_{2R}} |\nabla u - \nabla \bar{u}|^p dx &  \textnormal{if} & p \geq 2.
\end{array}
\right.
\end{equation}
\end{lemma}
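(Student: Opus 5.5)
The plan is to combine the weak Euler--Lagrange equation for $\bar u$ with the standard monotonicity/convexity inequalities for the vector field $\xi\mapsto|\xi|^{p-2}\xi$. Since $\bar u$ is $p$-harmonic in $B_{2R}$ and $u-\bar u\in W_0^{1,p}(B_{2R})$, testing the equation with $\varphi=u-\bar u$ gives
\[
\int_{B_{2R}} |\nabla \bar u|^{p-2}\nabla\bar u\cdot(\nabla u-\nabla\bar u)\,dx=0 .
\]
Hence, for $t\in[0,1]$, set $w_t:=\bar u+t(u-\bar u)$ and $\xi_t:=\nabla w_t$. Then
\[
\int_{B_{2R}}|\nabla u|^p-|\nabla\bar u|^p\,dx
= p\int_0^1\!\!\int_{B_{2R}}\big(|\xi_t|^{p-2}\xi_t-|\nabla\bar u|^{p-2}\nabla\bar u\big)\cdot(\nabla u-\nabla\bar u)\,dx\,dt .
\]
Here the subtracted term integrates to zero by the previous identity. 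Each integrand equals $t^{-1}\big(|\xi_t|^{p-2}\xi_t-|\xi_0|^{p-2}\xi_0\big)\cdot(\xi_t-\xi_0)$.

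The key step is the classical pointwise inequality, valid for all $a,b\in\R^N$:
\[
\big(|a|^{p-2}a-|b|^{p-2}b\big)\cdot(a-b)\ \ge\ c(N,p)\,(|a|+|b|)^{p-2}|a-b|^2 \quad(1<p<\infty).
\]
For $p\ge2$ this implies the bound $\ge c|a-b|^p$, since $(|a|+|b|)^{p-2}\ge|a-b|^{p-2}$. I will quote this inequality, or prove it via $|a|^{p-2}a-|b|^{p-2}b=\int_0^1 A(b+s(a-b))(a-b)\,ds$, where the symmetric matrix $A(\eta)=|\eta|^{p-2}(I+(p-2)\hat\eta\otimes\hat\eta)$ has smallest eigenvalue $\min\{1,p-1\}|\eta|^{p-2}$, together with the elementary estimate $\int_0^1|b+s(a-b)|^{p-2}ds\simeq(|a|+|b|)^{p-2}$.

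Applying this with $a=\xi_t$ and $b=\nabla\bar u$ gives $a-b=t(\nabla u-\nabla\bar u)$. For $p\ge2$ the integrand is bounded below by $c\,t^{p-1}|\nabla u-\nabla\bar u|^p$, and integrating $t^{p-1}$ over $[0,1]$ yields the second case with $C=c$.

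For $1<p\le2$ the integrand is bounded below by $c\,t\,(|\xi_t|+|\nabla\bar u|)^{p-2}|\nabla u-\nabla\bar u|^2$. Since $p-2\le0$ and $|\xi_t|\le t|\nabla u|+(1-t)|\nabla\bar u|\le|\nabla u|+|\nabla\bar u|$, we have $(|\xi_t|+|\nabla\bar u|)^{p-2}\ge 2^{p-2}(|\nabla u|+|\nabla\bar u|)^{p-2}$. Integrating in $t$ then gives the first case.

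The only delicate point, which I expect to be the main obstacle, is the singular range $p<2$. There the weight blows up where both gradients vanish, so the pointwise inequality must be stated with the convention that the right-hand side is $0$ when $a=b=0$. Its proof requires the lower bound $\int_0^1|b+s(a-b)|^{p-2}ds\ge(|a|+|b|)^{p-2}$, which is immediate since $|b+s(a-b)|\le|a|+|b|$ and $p-2<0$. For $p\ge2$ one instead needs the reverse-type estimate $\int_0^1|b+s(a-b)|^{p-2}ds\ge c|a-b|^{p-2}$. This holds because the segment $\{b+s(a-b)\}$ stays at distance $\ge|a-b|/4$ from the origin on a subinterval of length at least $1/2$.

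Finally, the use of Fubini and of differentiation under the integral is justified by $|\xi_t|^{p-1}|\nabla u-\nabla\bar u|\in L^1$, which follows from Hölder's inequality since $u,\bar u\in W^{1,p}$.
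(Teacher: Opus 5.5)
Your proposal is correct and follows essentially the same route as the paper. Both arguments test the $p$-harmonic equation with $u-\bar u$, interpolate $\bar u+t(u-\bar u)$, rewrite the difference as $p\int_0^1 t^{-1}\int (|\xi_t|^{p-2}\xi_t-|\xi_0|^{p-2}\xi_0)\cdot(\xi_t-\xi_0)\,dx\,dt$, and apply the standard monotonicity inequality together with $|\xi_t|+|\nabla\bar u|\le 2(|\nabla u|+|\nabla\bar u|)$ when $p\le 2$. The differences are only in detail: you carry out the $p\ge 2$ case explicitly via $\int_0^1 t^{p-1}\,dt$, where the paper cites a reference, and you sketch proofs of the pointwise inequality and of the Fubini/differentiation step, which the paper simply quotes.
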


\begin{proof}
The proof of these inequalities is rather routine. Note that the $p$-harmonicity of $\bar{u}$ implies 
    \begin{equation*}
        \int_{B_{2R}}  |\nabla \bar{u}|^{p-2} \nabla \bar{u}\cdot \nabla(u - \bar{u}) dx =0.
    \end{equation*}
    For $0 \leq s \leq 1$, set $u_s := su + (1-s)\bar{u}$ and compute
    \begin{align*}
        \int_{B_{2R}} |\nabla u|^p - |\nabla {\bar{u}}|^p dx &= \int_0^1 \dfrac{d}{ds}\left( \int_{B_{2R}} |\nabla u_s|^p dx\right) ds \\
        &= p \int_0^1 ds \int_{B_{2R}} |\nabla u_s|^{p-2} \nabla u_s \cdot \nabla(u - \bar{u}) dx \\
        &= p \int_0^1 ds \int_{B_{2R}} \left( |\nabla u_s|^{p-2} \nabla u_s - |\nabla \bar{u}|^{p-2} \nabla \bar{u} \right) \cdot \nabla(u - \bar{u}) dx\\
        &= p \int_0^1 \dfrac{ds}{s} \int_{B_{2R}} \left( |\nabla u_s|^{p-2} \nabla u_s - |\nabla \bar{u}|^{p-2} \nabla \bar{u} \right) \cdot \nabla(u_s - \bar{u}) dx,
    \end{align*}
where we used the observation that 
\begin{equation} \label{sus}
    s(u - \bar{u}) = u_s - \bar{u}.
\end{equation}
Recalling the elementary inequalities (cf. \cite[Ch. 12] {lindqvist2019notes}), valid for $a,b \in \mathbb{R}^N$,
\begin{equation*}
    \left(|a|^{p-2} a - |b|^{p-2} b \right) \cdot (a-b)\geq C(p) \begin{cases}
        \left(|a| + |b| \right)^{p-2} |a-b|^2 \quad \textnormal{if} \quad 1 < p \leq 2 \\
        |a-b|^p \quad \quad \quad \quad \quad  \quad \ \ \textnormal{if} \quad p \geq 2,
    \end{cases}
\end{equation*}
we obtain, for $1 < p \leq 2$, 
\begin{align*}
    \int_{B_{2R}} |\nabla u|^p - |\nabla {\bar{u}}|^p dx &\geq C \int_0^1 \dfrac{ds}{s}\int_{B_R} \left(|\nabla u_s|+ |\nabla \bar{u}| \right)^{p-2} |\nabla u_s - \nabla \bar{u}|^2 dx \\
    &\geq C
    \int_0^1 s ds \int_{B_{2R}} \left(|\nabla u|+ |\nabla \bar{u}| \right)^{p-2} |\nabla u - \nabla \bar{u}|^2 dx \\
    &= C \int_{B_{2R}} \left(|\nabla u|+ |\nabla \bar{u}| \right)^{p-2} |\nabla u - \nabla \bar{u}|^2 dx,
\end{align*}
where the second inequality is due to \eqref{sus}, $p-2 \leq 0$, and the bound
\begin{equation*}
    |\nabla u_s|+ |\nabla \bar{u}| \leq s|\nabla u| + (2-s) |\nabla \bar{u}| \leq 2 \left(|\nabla u| + |\nabla \bar{u}| \right),
\end{equation*}
as desired. The case for $p\geq 2$ is analogous; see \cite[Lemma 2.4]{leitao2015regularity}.
\end{proof}

The following lemma provides a practical estimate for the difference of the nonlinearity appearing in $\Jd[\cdot]$. 

 \begin{lemma} \label{nonlinearity-estim-lemma} If $u \in W^{1,p}(B_{2R})$, and $\bar{u} \in W^{1,p}(B_{2R})$ is its $p$-harmonic replacement in $B_{2R} \Subset \Omega$, then
    \begin{equation*}
    \int_{B_{2R}}  F_\gamma(\bar{u}) - F_\gamma(u) dx \leq \begin{cases}
        C \displaystyle \int_{B_{2R}} | u - \bar{u} |^\gamma  dx \quad \textnormal{if} \quad 0 < \gamma \leq 1 \\
        \\
        C \displaystyle  \int_{B_{2R}}|u - \bar{u}| dx \ 
         \quad \: \textnormal{if} \quad   1 \leq \gamma < p ,
    \end{cases}
\end{equation*}
for constants $C>0$ depending on $\lambda_\pm,\gamma,$ and $\|u\|_{L^\infty(B_{2R})}$.  
\end{lemma}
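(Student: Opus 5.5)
The estimate will be proved pointwise and then integrated over $B_{2R}$. Write $F_\gamma(v)=\lambda_+ (v_+)^\gamma+\lambda_-(v_-)^\gamma$. Since $t\mapsto t_\pm$ is $1$-Lipschitz, we have $|\bar u_\pm-u_\pm|\le|\bar u-u|$ at every point. So it suffices to control $|a^\gamma-b^\gamma|$ for nonnegative reals $a,b$, and then apply this to $a=\bar u_\pm(x)$, $b=u_\pm(x)$. The constant will come from $\lambda_+ +\lambda_-$ together with a bound on $|u|$ and $|\bar u|$.

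In the range $0<\gamma\le 1$ we use the elementary subadditivity $(a+b)^\gamma\le a^\gamma+b^\gamma$ for $a,b\ge0$, which follows from the concavity of $t\mapsto t^\gamma$ together with $0^\gamma=0$. It gives $|a^\gamma-b^\gamma|\le|a-b|^\gamma$. Hence, pointwise,
\[
F_\gamma(\bar u)-F_\gamma(u)\le \lambda_+|\bar u_+-u_+|^\gamma+\lambda_-|\bar u_--u_-|^\gamma\le(\lambda_++\lambda_-)|u-\bar u|^\gamma .
\]
Integrating yields the first case with $C=\lambda_++\lambda_-$. No $L^\infty$ bound is needed here.

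In the range $1\le\gamma<p$, the map $t\mapsto t^\gamma$ is locally Lipschitz on $[0,\infty)$. The mean value theorem gives $|a^\gamma-b^\gamma|\le\gamma\max\{a,b\}^{\gamma-1}|a-b|$. We then need a uniform bound $M$ for both $|u|$ and $|\bar u|$ on $B_{2R}$. For $u$ this is $\|u\|_{L^\infty(B_{2R})}$. For $\bar u$ we invoke \eqref{pharmonic-osc-2}, i.e.\ the maximum principle, which gives $\sup_{B_{2R}}|\bar u|\le\sup_{B_{2R}}|u|$. Hence
\[
F_\gamma(\bar u)-F_\gamma(u)\le\gamma(\lambda_++\lambda_-)\|u\|_{L^\infty(B_{2R})}^{\gamma-1}|u-\bar u|,
\]
and integrating gives the second case. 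At $\gamma=1$ the two bounds coincide, which explains the overlap of the ranges.

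The only delicate point is the use of the maximum principle when $u$ is merely a $W^{1,p}$ function with boundary trace. One should interpret $\sup_{\partial B_{2R}}|u|$ through the comparison principle: the constants $\pm\|u\|_{L^\infty(B_{2R})}$ are $p$-harmonic, and $(\bar u-\|u\|_\infty)_+\in W^{1,p}_0(B_{2R})$. Comparison then gives $|\bar u|\le\|u\|_{L^\infty(B_{2R})}$ a.e. This is exactly the content of \eqref{pharmonic-osc-2}, so we may take it as given. The rest is routine.
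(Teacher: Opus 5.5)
Your proposal is correct and essentially matches the paper's proof. For $1\le\gamma<p$ the paper also uses the mean value theorem for $s\mapsto s^\gamma$, the maximum-principle bound \eqref{pharmonic-osc-2} (in the form $(\bar u_\pm+u_\pm)^{\gamma-1}\le(2\sup_{B_{2R}}|u|)^{\gamma-1}$, where you use the $\max$), and the $1$-Lipschitz property of $t\mapsto t_\pm$; for $0<\gamma\le1$ the paper only cites the proof of Theorem 1.1 in \cite{leitao2015regularity}, and your subadditivity bound $|a^\gamma-b^\gamma|\le|a-b|^\gamma$ proves that case directly.
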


\begin{proof}
The case $0 <\gamma \leq 1$ was obtained in the proof of \cite[Theorem 1.1]{leitao2015regularity}, with a constant $C=C(\lambda_\pm)>0$. We therefore focus on the case $1 < \gamma < p$. One first notices that 
\begin{equation*} \label{nonlin-diff}
    \left| F_\gamma(\bar{u}) - F_\gamma(u) \right| \leq \lambda_+  \left| (\bar{u}_+ )^{\gamma} - (u_+)^\gamma \right| + \lambda_-  \left| (\bar{u}_- )^{\gamma} - (u_-)^\gamma \right|.
\end{equation*}
Then, the mean value theorem applied to the map $s \mapsto s^\gamma$ yields
\begin{equation*} \label{mvt}  
    \left| (\bar{u}_\pm)^\gamma - (u_\pm)^\gamma \right| \leq \gamma (\bar{u}_\pm+ u_\pm)^{\gamma-1} \left| \bar{u}_\pm - u_\pm \right|, 
\end{equation*}
and, by \eqref{pharmonic-osc-2}, we readily estimate
\begin{equation*} \label{max-prin-bound}
     (\bar{u}_\pm+ u_\pm)^{\gamma-1} \leq \left(2 \sup_{B_{2R}} |u|\right)^{\gamma-1}.
\end{equation*}
The fact that both the positive and negative parts of $u$ are $1$-Lipschitz functions implies
\begin{equation*} 
    \left| F_\gamma(\bar{u}) - F_\gamma(u) \right| \leq C |\bar{u} - u|.
\end{equation*}
for a constant $C>0$ that depends on $\lambda_\pm,\gamma$, and $\|u\|_{L^\infty(B_{2R})}$.
\end{proof}

\subsection{The degenerate regime}

Here, we aim at establishing H\"older regularity for the gradient of minimizers of $\Jd[\cdot]$ in the range $p\geq 2$. The following proposition is the essential ingredient to derive such regularity. 

\begin{proposition}
 \label{grad-u-2-estim}
    Suppose $u$ is a local minimizer of $\J_\delta[\cdot]$ with $0\leq \delta \leq \delta_0$ and $2 \leq p < \infty$. Let $B_{4R} \Subset \Omega$ be arbitrary and $\bar{u} \in W^{1,p}(B_{2R})$ be the $p$-harmonic replacement of $u$ in $B_{2R}$. Then, there exist constants $C>0$, depending on $\delta_0, \lambda _\pm, N, p, \gamma$ and $\|u\|_{L^\infty(B_{4R})}$, such that 
    \begin{equation*}
        \int_{B_R} |\nabla u - \nabla \bar{u}|^2 dx \leq \begin{cases}
            C \displaystyle R^{N+ \tfrac{2\gamma}{p-\gamma}} \quad \textnormal{if} \quad 0 < \gamma \leq 1 \\ \\
            C\displaystyle R^{N+ \tfrac{2}{p-1}}  \quad    \textnormal{if} \quad 1 \leq \gamma < p.
        \end{cases}
    \end{equation*}
\end{proposition}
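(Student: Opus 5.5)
The plan is to compare $u$ with its $p$-harmonic replacement $\bar u$ on $B_{2R}$ and to close an inequality for the single quantity
\[
X:=\int_{B_{2R}}|\nabla u-\nabla\bar u|^p\,dx ,
\]
then pass to the $L^2$ norm on $B_R$ by H\"older's inequality. Since $u-\bar u\in W^{1,p}_0(B_{2R})$, the function $\bar u$ extended by $u$ outside $B_{2R}$ is an admissible competitor in the sense of \Cref{local-min}. Local minimality of $u$ for $\Jd[\cdot]$ then gives
\[
\int_{B_{2R}}|\nabla u|^p-|\nabla\bar u|^p\,dx\le p\,\delta\int_{B_{2R}}F_\gamma(\bar u)-F_\gamma(u)\,dx .
\]
Because $p\ge2$, \Cref{two-cases-estimate-p-lemma} bounds the left-hand side from below by $C X$. \Cref{nonlinearity-estim-lemma} bounds the right-hand side from above by $C\delta_0\int_{B_{2R}}|u-\bar u|^\gamma$ when $\gamma\le1$, and by $C\delta_0\int_{B_{2R}}|u-\bar u|$ when $\gamma\ge1$. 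Its constant depends on $\|u\|_{L^\infty(B_{2R})}\le\|u\|_{L^\infty(B_{4R})}$; this is where the maximum principle bound \eqref{pharmonic-osc-2} on $\bar u$ enters.

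Next, write $q=\gamma$ in the first case and $q=1$ in the second, so that $q\le p$ in both. I apply H\"older's inequality and then Poincar\'e's inequality on $B_{2R}$ to $u-\bar u\in W^{1,p}_0(B_{2R})$:
\[
\int_{B_{2R}}|u-\bar u|^q\le CR^{N(1-q/p)}\Big(\int_{B_{2R}}|u-\bar u|^p\Big)^{q/p}\le CR^{N(1-q/p)+q}\,X^{q/p}.
\]
Combining this with the previous step gives $X\le CR^{N(1-q/p)+q}X^{q/p}$. Since $q<p$, dividing by $X^{q/p}$ (the case $X=0$ being trivial) and raising to the power $p/(p-q)$ yields
\[
X\le C R^{\,N+\frac{pq}{p-q}} .
\]

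Finally, because $p\ge2$, H\"older's inequality on $B_R\subset B_{2R}$ gives
\[
\int_{B_R}|\nabla u-\nabla\bar u|^2\le C R^{N(1-2/p)}X^{2/p}\le CR^{\,N+\frac{2q}{p-q}}.
\]
With $q=\gamma$ this is $R^{N+2\gamma/(p-\gamma)}$, and with $q=1$ it is $R^{N+2/(p-1)}$, as claimed.

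I do not expect a real obstacle in this argument. The one point needing care is the exponent algebra, including checking that the factor $2$ in $B_{2R}$ and the scale-invariant Poincar\'e constant are absorbed into $C$. I also need to confirm that the constants depend only on the listed data, which holds because $\delta\le\delta_0$ and $\|u\|_{L^\infty(B_{2R})}\le\|u\|_{L^\infty(B_{4R})}$.
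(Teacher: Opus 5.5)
Your proposal is correct and follows the paper's proof essentially step for step. You use the lower bound from \Cref{two-cases-estimate-p-lemma} for $p\ge 2$, minimality together with \Cref{nonlinearity-estim-lemma}, H\"older and Poincar\'e to reach $X\le CR^{N(1-q/p)+q}X^{q/p}$, absorption, and a final H\"older step from $L^p$ to $L^2$. Your single exponent $q\in\{\gamma,1\}$ with $q<p$ is just a compact way of handling the paper's two cases, and the exponent algebra is right.
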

\begin{proof}
For $p\geq2$, we have by \Cref{two-cases-estimate-p-lemma} that 
\begin{equation} \label{degen-diff}
    \int_{B_{2R}} |\nabla u - \nabla \bar{u}|^p dx \leq \int_{B_{2R}} |\nabla u|^p - |\nabla {\bar{u}}|^p dx.
\end{equation}
On the other hand, the optimality of $u$ and the bounds granted by \Cref{nonlinearity-estim-lemma} yield
\begin{align} \label{nonlinearity-estim}
        \displaystyle\int_{B_{2R}} |\nabla u|^p - |\nabla {\bar{u}}|^p dx &\leq \delta_0 p \int_{B_{2R}} F_\gamma(\bar{u}) - F_\gamma(u)    dx \nn \\
        &\leq \begin{cases}
        C\displaystyle\int_{B_{2R}} |u - \bar{u}|^\gamma  dx \quad \textnormal{if} \quad 0 < \gamma \leq 1 \\ \\
        C \displaystyle \int_{B_{2R}}|u - \bar{u}|   dx  
         \quad \ \:  \textnormal{if} \quad   1 \leq \gamma < p,
    \end{cases}
\end{align}
which, after an application of H\"older's inequality, reads
\begin{align} \label{nonlinearity-estim-2}
        \int_{B_{2R}} |\nabla u|^p - |\nabla {\bar{u}}|^p dx &\leq \begin{cases}
            C\displaystyle \mathcal{L}^N(B_{2R})^{1-  \frac{\gamma}{p}} \left(\int_{B_{2R}} |u-\bar{u}|^p dx \right)^{\frac{\gamma}{p}} \quad \textnormal{if} \quad 0 < \gamma \leq 1\\ \\
            C \displaystyle \mathcal{L}^N(B_{2R})^{1-  \frac{1}{p}} \left(\int_{B_{2R}} |u-\bar{u}|^p dx \right)^{\frac{1}{p}} \quad \textnormal{if} \quad 1 \leq \gamma < p.
        \end{cases}
\end{align}
In view of $\eqref{degen-diff}$ and \eqref{nonlinearity-estim-2}, we deduce
\begin{equation}  \label{pth-grad-estim-deg}
    \int_{B_{2R}} |\nabla u - \nabla \bar{u}|^p dx \leq \begin{cases}
        C \displaystyle R^{N \left( 1-  \frac{\gamma}{p} \right) } \left( \int_{B_{2R}} |u - \bar{u}|^p dx \right)^{\frac{\gamma}{p}}  \quad \textnormal{if} \quad 0 < \gamma \leq 1 \\ \\

        C \displaystyle R^{N \left( 1-  \frac{1}{p} \right) } \left( \int_{B_{2R}} |u - \bar{u}|^p dx \right)^{\frac{1}{p}}   \quad \textnormal{if}  \quad  1 \leq \gamma < p,
    \end{cases} 
\end{equation}
for constants $C>0$ that may depend on $\lambda_\pm,N,p,\gamma, \|u\|_{L^\infty(B_{4R})}$ and $\delta_0$. Since $u - \bar{u} \in W_0^{1,p}(B_{2R})$, the integrals on the right-hand side of \eqref{pth-grad-estim-deg} can be estimated via Poincar\'e's inequality 
\begin{equation} \label{pincare}
    \int_{B_{2R}} |u - \bar{u}|^p dx \leq C(N,p) R^p  \int_{B_{2R}} |\nabla u - \nabla \bar{u}|^p dx,
\end{equation}
and, upon simplification, we obtain
\begin{equation} \label{grad-comparison-degen}
    \int_{B_{2R}} |\nabla u - \nabla \bar{u}|^p dx \leq \begin{cases}
        C \displaystyle R^{N +\tfrac{p\gamma}{p-\gamma}}   \quad \textnormal{if} \quad 0 < \gamma \leq 1 \\ \\
        C \displaystyle R^{N + \tfrac{p}{p-1}}    \quad   \textnormal{if}  \quad  1 \leq \gamma < p.
    \end{cases}
\end{equation}
Combining \eqref{grad-comparison-degen} with the basic inequality 
\begin{equation*}
    \int_{B_{2R}} |\nabla u - \nabla \bar{u}|^2 dx \leq \mathcal{L}^N(B_{2R})^{1-  \frac{2}{p}} \left(\int_{B_{2R}} |\nabla u - \nabla \bar{u}|^p dx \right)^{\frac{2}{p}},
\end{equation*}
completes the proof.
\end{proof}

Before proving our main result in this subsection, we recall the following oscillation estimate for the gradient of $p$-harmonic functions.

\begin{lemma} [{{\cite[Theorem 5.1] {dibenedetto1993higher}}}] \label{harm-grad-osc-estim}  If $\bar{u}$ is a $p$-harmonic function in $B_R$, then there exist constants $\alpha_p \in (0,1)$ and $c>1$, both depending on $N$ and $p$, such that for all $0 \leq r \leq R$, 
\begin{equation*}
    \int_{B_r} |\nabla \bar{u} - (\nabla \bar{u})_{r}|^2 dx \leq c \left(\dfrac{r}{R} \right)^{N+2\alpha_p} \int_{B_R} |\nabla \bar{u} - (\nabla \bar{u})_{R}|^2 dx. 
\end{equation*}
\end{lemma}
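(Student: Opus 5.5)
This is the Campanato-type decay estimate for the gradient of $p$-harmonic functions due to DiBenedetto--Manfredi. I would not rederive it from scratch. Instead I would obtain it from two standard facts for $p$-harmonic $\bar u$ in $B_R$: interior $C^{1,\alpha_p}$ regularity, and the sup bound on the gradient. Both are uniform under the natural scaling and come with constants depending only on $N,p$.

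\textbf{Reduction.} First, the statement is invariant under translations, dilations $x\mapsto Rx$, and multiplication by constants. It is also invariant under subtracting affine functions? No: subtracting $\ell(x)=\xi\cdot x$ destroys $p$-harmonicity when $p\neq2$, so normalization must be handled with care. I would rescale to $R=1$. The key intrinsic quantity is then
\[
E(r):=\fint_{B_r}|\nabla\bar u-(\nabla\bar u)_r|^2\,dx .
\]
Note that the trivial range $r\ge R/4$ (say) follows at once with $c$ a power of $4$, since $\int_{B_r}|f-(f)_r|^2\le \int_{B_r}|f-(f)_R|^2$. So it suffices to treat $r<R/4$.

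\textbf{Main step, the excess decay at $r<R/4$.} I would use the DiBenedetto--Manfredi alternative on intrinsically scaled cylinders. Let $M:=\sup_{B_{R/2}}|\nabla\bar u|$. By the $L^\infty$--$L^2$ gradient bound applied to $\bar u-$const, $M$ is controlled by averages of $|\nabla\bar u|^2$ on $B_R$. The gradient oscillation is the delicate part: the Hölder estimate
\[
\osc_{B_r}\nabla\bar u\le C\,(r/R)^{\alpha_p}\,\osc_{B_{R/2}}\nabla\bar u
\]
holds with constants depending only on $N,p$, and this yields $E(r)\le C(r/R)^{2\alpha_p}(\osc_{B_{R/2}}\nabla\bar u)^2$. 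The remaining issue is to bound $\osc_{B_{R/2}}\nabla\bar u$ by the excess $E(R)$ rather than by $\fint|\nabla\bar u|^2$. I would split into two cases.

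\emph{Degenerate case}, $|(\nabla\bar u)_R|^2\le K\,E(R)$. Here $\fint_{B_R}|\nabla\bar u|^2\le (1+K)E(R)$, and the sup bound gives what is needed.

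\emph{Nondegenerate case}, $|(\nabla\bar u)_R|^2>K\,E(R)$ with $K$ large. Then $|\nabla\bar u|$ stays comparable to $|(\nabla\bar u)_R|$ on most of $B_{R/2}$. I would argue that the equation is then uniformly elliptic there, linearizing around the mean gradient, so that $\partial_i\bar u$ solves a uniformly elliptic equation with Hölder coefficients. Linear Campanato estimates then give $E(r)\le C(r/R)^{2\alpha}E(R)$ down to the scale where nondegeneracy persists. Iterating the two alternatives across dyadic scales produces the claim with $c$ depending on $N,p$.

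\textbf{Main obstacle.} The hard step is the nondegenerate case: upgrading "large mean gradient relative to excess" to pointwise nondegeneracy on a smaller ball. This needs a smallness argument, for instance De Giorgi on $(|\nabla\bar u|^2-\mu^2/2)_\pm$, precisely as in \cite[Theorem 5.1]{dibenedetto1993higher}. In the paper I would simply cite that reference, as the statement already does, since it is exactly their result.
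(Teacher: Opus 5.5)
The paper gives no proof of this lemma; it is imported verbatim from \cite[Theorem 5.1]{dibenedetto1993higher}, so your decision to cite that result is exactly what the paper does. If you keep the sketch as motivation, note two points: the oscillation-to-oscillation bound $\osc_{B_r}\nabla\bar u\le C(r/R)^{\alpha_p}\osc_{B_{R/2}}\nabla\bar u$ that you treat as a standard input is not the usual $C^{1,\alpha}$ estimate (which has $\sup_{B_{R/2}}|\nabla\bar u|$ on the right) and is essentially as strong as the lemma itself; and no dyadic iteration is needed, since a single degenerate/nondegenerate dichotomy at scale $R$ already gives the decay for every $r<R/8$ (via the usual estimate in the degenerate case, and via De Giorgi--Nash--Moser for the linearized equation satisfied by $\partial_i\bar u-k$ in the nondegenerate case).
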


We are now in a position to demonstrate the following theorem.

\begin{theorem} \label{interior-reg}
    If $u$ is a local minimizer of $\Jd[\cdot]$ with $0 \leq \delta \leq \delta_0$ and $2 \leq p < \infty$, then $u \in C^{1,\alpha}_{\textnormal{loc}}(\Omega)$ for some $\alpha \in (0,1)$. Moreover, there exists a constant $C_1>0$, depending on  $\lambda _\pm,N,p,\gamma,\delta_0, \|u\|_{L^\infty(\Omega')}$, and $\textnormal{dist}(\Omega',\partial \Omega)$, such that 
     \begin{equation}
    \|u\|_{C^{1,\alpha}(\Omega')} \leq C_1.
    \end{equation}
\end{theorem}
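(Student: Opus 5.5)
The plan is a Campanato-type argument: combine the comparison estimate of \Cref{grad-u-2-estim} with the decay of \Cref{harm-grad-osc-estim}, then use Campanato's characterization of H\"older spaces for $\nabla u$. Fix $\Omega' \Subset \Omega$ and $x_0 \in \Omega'$, translated to the origin. Take $R_0 \leq \min\{1,\textnormal{dist}(\Omega',\partial\Omega)/4\}$ so that $B_{4R} \Subset \Omega$ for all $R \leq R_0$. By \Cref{uniform-bound}, $\|u\|_{L^\infty(B_{4R})}$ is controlled uniformly in $x_0$. We set
\[
\phi(r) := \int_{B_r} |\nabla u - (\nabla u)_r|^2\,dx, \qquad \beta := \begin{cases} \gamma/(p-\gamma) & \textnormal{if } 0<\gamma\le 1,\\ 1/(p-1) & \textnormal{if } 1\le\gamma<p,\end{cases}
\]
so that \Cref{grad-u-2-estim} reads $\int_{B_R}|\nabla u-\nabla\bar u|^2 \le C R^{N+2\beta}$, with $\beta>0$.

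\textbf{Step 1: decay of $\phi$.} Fix $0<r\le R\le R_0$ and let $\bar u$ be the $p$-harmonic replacement of $u$ in $B_{2R}$. For $r \le R$, the fact that averages minimize the $L^2$ deviation, together with the triangle inequality, gives
\[
\phi(r) \le 2\int_{B_r}|\nabla \bar u - (\nabla\bar u)_r|^2 + 2\int_{B_R}|\nabla u - \nabla \bar u|^2 .
\]
We apply \Cref{harm-grad-osc-estim} in $B_R$ (where $\bar u$ is $p$-harmonic) and then return from $\bar u$ to $u$ in the same way, obtaining
\[
\phi(r) \le C\Big(\tfrac{r}{R}\Big)^{N+2\alpha_p}\phi(R) + C\int_{B_R}|\nabla u-\nabla\bar u|^2 \le C\Big(\tfrac{r}{R}\Big)^{N+2\alpha_p}\phi(R) + C R^{N+2\beta}.
\]
The bound on the last integral is exactly \Cref{grad-u-2-estim}.

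\textbf{Step 2: iteration.} Fix $\alpha < \min\{\alpha_p,\beta\}$. The standard iteration lemma (Giaquinta's lemma: if $\phi(r)\le A(r/R)^{a}\phi(R)+BR^{b}$ with $b<a$, then $\phi(r)\le C[(r/R)^{b}\phi(R)+Br^{b}]$) gives
\[
\phi(r) \le C\Big[\Big(\tfrac{r}{R_0}\Big)^{N+2\alpha}\phi(R_0) + r^{N+2\alpha}\Big].
\]
By \Cref{Morrey} and H\"older (for $p\ge2$), $\phi(R_0)\le \int_{B_{R_0}}|\nabla u|^2 \le C$. Hence $\phi(r)\le C r^{N+2\alpha}$ uniformly for $x_0\in\Omega'$. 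Campanato's theorem then yields $\nabla u\in C^{0,\alpha}(\Omega')$ with a bound on the H\"older seminorm.

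\textbf{Step 3: the full $C^{1,\alpha}$ norm.} It remains to bound $\|\nabla u\|_{L^\infty(\Omega')}$. We estimate $|(\nabla u)_{r}(x_0)|$ at $r = R_0$ by $C R_0^{-N/2}\|\nabla u\|_{L^2}$, which is controlled by \Cref{Morrey}. The H\"older seminorm then passes this to pointwise values, since $|\nabla u(x_0)-(\nabla u)_{R_0}(x_0)| \le C R_0^{\alpha}$. Together with \Cref{uniform-bound} this gives $\|u\|_{C^{1,\alpha}(\Omega')}\le C_1$, with the stated dependence. All constants are uniform in $\delta\in[0,\delta_0]$, because \Cref{grad-u-2-estim} depends only on $\delta_0$.

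\textbf{Main obstacle.} I expect the delicate point to be Step 1. \Cref{harm-grad-osc-estim} is stated for averages at the scale $R$ of the replacement's domain, while \Cref{grad-u-2-estim} controls the difference only on $B_R$ with $\bar u$ defined on $B_{2R}$. The scales have to be matched carefully: use $\bar u$ on $B_R\subset B_{2R}$ as a $p$-harmonic function there. One must also require $\alpha$ strictly below $\alpha_p$ so that the iteration lemma applies with a gap between the exponents; this is where the restriction $\alpha<\min\{\alpha_p,\beta\}$ comes from.
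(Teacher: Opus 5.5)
Your proposal is correct and follows the paper's proof essentially step for step: you compare with the $p$-harmonic replacement on $B_{2R}$, apply the decay of \Cref{harm-grad-osc-estim} on $B_R$, bound the comparison term by \Cref{grad-u-2-estim}, run the iteration lemma (\Cref{Algebraic-ineq}) on the nondecreasing $\phi$, and conclude with Campanato's characterization. The differences are only cosmetic: you settle for any $\alpha<\min\{\alpha_p,\beta\}$, whereas the paper keeps $\alpha=\beta$ when $\beta<\alpha_p$ (the theorem does not need this), and you spell out details the paper leaves implicit, namely the choice $R_0\le 1$ to absorb $R^{N+2\beta}$ into $R^{N+2\alpha}$ and the $L^\infty$ bound on $\nabla u$.
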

\begin{proof}
Take $B_{4R} \Subset \Omega$ and let $\bar{u}$ be the $p$-harmonic replacement of $u$ in $B_{2R}$. Estimating via \Cref{sum-p-harm} and \Cref{harm-grad-osc-estim}, we readily have, for $0 \leq r \leq R$,
\begin{align} \label{grad-u-avg-estim1}
    \int_{B_r} |\nabla u - (\nabla u)_{r}|^2 dx &\leq C \int_{B_r} |\nabla \bar{u} - (\nabla \bar{u})_{r}|^2 dx \nn \\
    & \quad + C \int_{B_r} |\nabla u - \nabla \bar{u}|^2 dx + C \int_{B_r} | (\nabla \bar{u})_{r}- (\nabla u)_{r}|^2 dx \nn \\
    &\leq C(N,p) \left(\dfrac{r}{R} \right)^{N+2\alpha_p} \int_{B_R} |\nabla \bar{u} - (\nabla \bar{u})_{R}|^2 dx \nn \\
    & \quad + C(N) \int_{B_r} |\nabla u - \nabla \bar{u}|^2 dx.
\end{align}
One can further rewrite the first term on the right-hand side of \eqref{grad-u-avg-estim1} using the estimate
\begin{align*}
    \int_{B_R} |\nabla \bar{u} - (\nabla \bar{u})_{R}|^2 dx &\leq C \int_{B_R} |\nabla u - (\nabla u)_{R}|^2 dx \nn  \\ 
    & \quad + C \int_{B_R} |\nabla \bar{u} - \nabla u|^2 dx + C \int_{B_R} | (\nabla u)_{R}-(\nabla \bar{u})_{R}|^2 dx \nn \\
    &\leq C \int_{B_R} |\nabla u - (\nabla u)_{R}|^2 dx + C(N) \int_{B_R} |\nabla u - \nabla \bar{u}|^2 dx. 
\end{align*}
Substituting the above inequality into \eqref{grad-u-avg-estim1} and applying \Cref{grad-u-2-estim}, we arrive at 
\begin{align*} \label{comparison-estimate}
\int_{B_r} |\nabla u - (\nabla u)_{r}|^2 dx & \leq C \left[\left(\dfrac{r}{R} \right)^{N+2\alpha_p} \int_{B_R} |\nabla u - (\nabla u)_{R}|^2 dx + \int_{B_R} |\nabla u - \nabla \bar{u}|^2 dx \right]  \nn  \\
    & \leq A\left(\dfrac{r}{R} \right)^{N+2\alpha_p} \int_{B_R} |\nabla u - (\nabla u)_{R}|^2 dx \nn \\
    &\quad + B\begin{cases}
            \displaystyle R^{N+ \tfrac{2\gamma}{p-\gamma}}  \quad \textnormal{if} \quad  0 < \gamma \leq 1 \\ \\
            \displaystyle
            R^{N+ \tfrac{2}{p-1}} \quad \textnormal{if} \quad   1 \leq \gamma < p,
        \end{cases}
\end{align*}
for some constants $A = A(N,p)>0$ and $B= B(\lambda _\pm,N,p,\gamma, \|u\|_{L^\infty(B_{4R})},\delta_0)>0$. Setting
     \begin{equation*}
        \phi(s):= \int_{B_s} |\nabla u - (\nabla u)_{s}|^2 dx,\end{equation*}
     and applying \Cref{Algebraic-ineq}, we conclude 
    \begin{equation*}
    \phi(r) \leq Cr^{N+2 \alpha},
    \end{equation*}
    for all $0 \leq r \leq R$, where 
    \begin{equation*}
        \alpha := 
        \begin{cases}
        \tfrac{\gamma}{p-\gamma} \quad \quad \textnormal{if}  \quad 0 < \gamma \leq 1 \quad \textnormal{and}  \quad \tfrac{\gamma}{p-\gamma} < \alpha_p \\
        \alpha_p - \epsilon  \quad   \textnormal{if}  \quad 0 < \gamma \leq 1 \quad \textnormal{and}  \quad \tfrac{\gamma}{p-\gamma} 
        \geq \alpha_p \\
        \tfrac{1}{p-1} \quad \quad    \textnormal{if}  \quad 1 \leq \gamma < p \quad \textnormal{and} \quad  \tfrac{1}{p-1} < \alpha_p \\
        \alpha_p - \epsilon \quad    \textnormal{if}  \quad 1 \leq \gamma < p \quad \textnormal{and} \quad  \tfrac{1}{p-1} \geq \alpha_p ,
        \end{cases}
    \end{equation*}
for any $0 < \epsilon \ll 1$. By Campanato's characterization (\cite[Theorem 1.2 of Ch. III] {giaquinta1983multiple}), $\nabla u$ is $\alpha$-H\"older continuous locally in $\Omega$. The dependencies of the constant $C_1>0$ follow from the definition of Campanato's norm and the constants defined above. This completes the proof 
\end{proof}

\subsection{The singular regime} We now turn our attention to the regularity of minimizers in the singular range, that is, $1 < p < 2$. We invoke the following nonlinear map to be employed with the gradient: for an arbitrary vector $a\in \mathbb{R}^N$, we consider 
\begin{equation*}
    V(a):= |a|^{\frac{p-2}{2}}a,
\end{equation*}
which satisfies (\cite[page 240]{duzaar2004p})
\begin{align} \label{V-ineq}
    c^{-1} (|a|^2+|b|^2)^{\frac{p-2}{2}} |a-b|^2 &\leq |V(a)-V(b)|^2 \leq c (|a|^2+|b|^2)^{\frac{p-2}{2}} |a-b|^2,
\end{align}
for some $c=c(N,p)\geq1$. We first establish the counterpart of \Cref{grad-u-2-estim} for the singular range.

\begin{proposition}
 \label{grad-u-2-estim-sing}
    Suppose $u$ is a local minimizer of $\J_\delta[\cdot]$ with $0\leq \delta \leq \delta_0$ and $1 < p < 2$. Let $B_{4R} \Subset \Omega$ be arbitrary and $\bar{u} \in W^{1,p}(B_{2R})$ be the $p$-harmonic replacement of $u$ in $B_{2R}$. Then, there exist constants $C>0$, depending on $\delta_0, \lambda _\pm, N, p, \gamma$ and $\|u\|_{L^\infty(B_{4R})}$, such that 
    \begin{equation*}
        \int_{B_R} \big|V(\nabla u) - V(\nabla \bar{u})\big|^2 dx \leq \begin{cases}
            C \displaystyle R^{N+\tfrac{p\gamma}{2-\gamma}} \ \ \quad \textnormal{if} \quad 0 < \gamma \leq 1 \\ \\
            C\displaystyle R^{N+p}  \quad \quad  \quad  \textnormal{if} \quad 1 \leq \gamma < p.
        \end{cases}
    \end{equation*}
\end{proposition}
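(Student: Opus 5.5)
The plan is to follow the degenerate-case argument of \Cref{grad-u-2-estim}, replacing $|\nabla u-\nabla\bar u|^p$ by the $V$-quantity. We then close the estimate with a reverse H\"older-type interpolation, using the Morrey-type gradient bound of \Cref{Morrey}. Throughout, write $w:=u-\bar u\in W^{1,p}_0(B_{2R})$ and
\[
X:=\int_{B_{2R}}|V(\nabla u)-V(\nabla\bar u)|^2\,dx, \qquad M:=\int_{B_{2R}}(|\nabla u|+|\nabla\bar u|)^p\,dx .
\]
Since the integrand of $X$ is nonnegative, it suffices to bound $X$; the integral over $B_R$ in the statement is at most $X$.

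\textbf{Step 1 (upper bound of $X$ by the energy gap).} Since $1<p<2$, the upper inequality in \eqref{V-ineq} and $(|a|^2+|b|^2)^{\frac{p-2}{2}}\le C(|a|+|b|)^{p-2}$ give $|V(a)-V(b)|^2\le C(|a|+|b|)^{p-2}|a-b|^2$. The first case of \Cref{two-cases-estimate-p-lemma} then yields $X\le C\int_{B_{2R}}|\nabla u|^p-|\nabla\bar u|^p$. The minimality of $u$ for $\Jd$ and \Cref{nonlinearity-estim-lemma} bound this energy gap by $C\int_{B_{2R}}|w|^\gamma$ if $\gamma\le1$, and by $C\int_{B_{2R}}|w|$ if $\gamma\ge1$. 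By H\"older's inequality and Poincar\'e's inequality \eqref{pincare}, with $q:=\min\{\gamma,1\}$, we get
\[
X\le C R^{N(1-\frac{q}{p})}R^{q}\Big(\int_{B_{2R}}|\nabla w|^p\Big)^{q/p}.
\]

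\textbf{Step 2 (closing the loop, the key step).} Split $|\nabla w|^p=\big[(|\nabla u|+|\nabla\bar u|)^{p-2}|\nabla w|^2\big]^{p/2}(|\nabla u|+|\nabla\bar u|)^{p(2-p)/2}$. Where $|\nabla u|+|\nabla\bar u|=0$, both sides vanish. Apply H\"older with exponents $2/p$ and $2/(2-p)$, and use the lower inequality in \eqref{V-ineq} together with $(|a|+|b|)^{p-2}\le C(|a|^2+|b|^2)^{\frac{p-2}{2}}$. This gives
\[
\int_{B_{2R}}|\nabla w|^p\le C X^{p/2}M^{(2-p)/2}.
\]
By \eqref{ubar-leq-u} and \Cref{Morrey} (applied with $B_{4R}\Subset\Omega$), $M\le C\int_{B_{2R}}|\nabla u|^p\le CR^{N-p}$, and the constant depends on $\|u\|_{L^\infty(B_{4R})}$. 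Substituting into Step 1 gives
\[
X\le C R^{N-\frac{Nq}{p}+q+\frac{(N-p)q(2-p)}{2p}}X^{q/2}=C R^{N(1-\frac q2)+\frac{pq}{2}}X^{q/2}.
\]

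\textbf{Step 3 (conclusion).} Since $q/2<1$, divide by $X^{q/2}$; if $X=0$ there is nothing to prove. Then raise to the power $1/(1-q/2)$:
\[
X\le C R^{N+\frac{pq}{2-q}}.
\]
For $q=\gamma\le1$ this is $CR^{N+\frac{p\gamma}{2-\gamma}}$. For $q=1$ it is $CR^{N+p}$, which matches both cases of the statement.

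The main obstacle is Step 2. In the singular range, the comparison lemma controls only the weighted quantity $X$, not $\int|\nabla w|^p$ directly. The exponent bookkeeping works only because the Morrey bound $M\lesssim R^{N-p}$ has exactly the scaling needed, so I would double-check that computation.
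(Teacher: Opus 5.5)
Your proposal is correct and follows the paper's proof essentially step for step. You bound the energy gap below by $\int|V(\nabla u)-V(\nabla\bar u)|^2$ via \eqref{V-ineq} and \Cref{two-cases-estimate-p-lemma}, and above via minimality, \Cref{nonlinearity-estim-lemma}, H\"older and Poincar\'e; you then use the H\"older splitting with the Morrey bound of \Cref{Morrey} to convert $\int|\nabla u-\nabla\bar u|^p$ back into the $V$-quantity. Your exponent bookkeeping $N(1-\tfrac q2)+\tfrac{pq}{2}$ with $q=\min\{\gamma,1\}$ checks out and reproduces both cases of the statement.
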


\begin{proof}
    For any $B_{4R} \Subset \Omega$, let $\bar{u}$ be the $p$-harmonic replacement of $u$ in $B_{2R}$. Applying \eqref{V-ineq} with $a:= \nabla u$ and $b:=\nabla \bar{u}$, we deduce
    \begin{equation}
        |\nabla u - \nabla \bar{u}|^p \leq C(N,p)  \big( |\nabla u|^2 + |\nabla \bar{u}|^2 \big)^{\frac{p(2-p)}{4}} \big| V(\nabla u) - V(\nabla \bar{u}) \big|^p,
    \end{equation}
    while an application of H\"older's inequality provides
    \begin{align} \label{sing-holder}
    \int_{B_{2R}} |\nabla u - \nabla \bar{u}|^p dx &\leq C(N,p) \left( \int_{B_{2R}}  \big( |\nabla u|^2 + |\nabla {\bar{u}}|^2 \big)^{\frac{p}{2}} dx \right)^{1-\frac{p}{2}} \nn \\
    & \quad \times \left( \int_{B_{2R}} \big|V(\nabla u) - V(\nabla \bar{u})\big|^2 dx \right)^\frac{p}{2}.
\end{align}
In view of \Cref{sum-p-harm}, \eqref{ubar-leq-u}, and \Cref{Morrey}, we have
\begin{equation*}
    \left( \int_{B_{2R}}  \big( |\nabla u|^2 + |\nabla {\bar{u}}|^2 \big)^{\frac{p}{2}} dx \right)^{1-\frac{p}{2}} \leq C  \left( \int_{B_{2R}}  |\nabla u|^p dx \right)^{1-\frac{p}{2}} \leq C R^{N\left(1-\frac{p}{2} \right) + p\left( \frac{p}{2}-1 \right)},
\end{equation*}
and thus, \eqref{sing-holder} simplifies to the instrumental estimate 
\begin{equation} \label{sing-holder-2}
    \int_{B_{2R}} |\nabla u - \nabla \bar{u}|^p dx \leq CR^{N\left(1-\frac{p}{2} \right) + p\left( \frac{p}{2}-1 \right)} \left( \int_{B_{2R}} \big|V(\nabla u) - V(\nabla \bar{u})\big|^2 dx \right)^\frac{p}{2}.
\end{equation}
Recall that, as a consequence of the minimality of $u$ and \Cref{nonlinearity-estim-lemma}, we have obtained, via \eqref{nonlinearity-estim},\eqref{nonlinearity-estim-2}, and \eqref{pincare}, the estimates
\begin{align} \label{nonlinearity-estim-3}
        \int_{B_{2R}} |\nabla u|^p - |\nabla {\bar{u}}|^p dx &\leq \begin{cases}
            C\displaystyle R^{N\left(1-  \frac{\gamma}{p} \right)+\gamma} \left(\int_{B_{2R}} |\nabla u-\nabla \bar{u}|^p dx \right)^{\frac{\gamma}{p} } \quad \textnormal{if} \quad 0 < \gamma \leq 1\\ \\
            C \displaystyle R^{N\left(1-  \frac{1}{p} \right) +1} \left(\int_{B_{2R}} |\nabla u-\nabla \bar{u}|^p dx \right)^{\frac{1}{p}} \quad \textnormal{if} \quad 1 \leq \gamma < p,
        \end{cases}
\end{align}
which, in view of \eqref{sing-holder-2}, read
\[\int_{B_{2R}} |\nabla u|^p - |\nabla {\bar{u}}|^p dx\]
\begin{equation} \label{nonlinearity-estim-4}
         \leq \begin{cases}
            C\displaystyle R^{N\left(1-\frac{\gamma}{2} \right)+\frac{p\gamma}{2}} \left(\int_{B_{2R}} \big|V(\nabla u) - V(\nabla \bar{u})\big|^2 dx \right)^{\frac{\gamma}{2} } \quad \: \textnormal{if} \quad 0 < \gamma \leq 1 \\ \\
            C \displaystyle R^{\frac{N+p}{2}} \left(\int_{B_{2R}} \big|V(\nabla u) - V(\nabla \bar{u})\big|^2 dx \right)^{\frac{1}{2}} \quad \quad \ \quad  \ \ \ \textnormal{if} \quad 1 \leq \gamma < p.
        \end{cases}
\end{equation}
The next step is to estimate \eqref{nonlinearity-estim-4} from below. To this end, we notice that upon combining   \eqref{V-ineq} and the elementary inequality for $1 < p < 2$,  
    \begin{equation*}
        (|a|+|b|)^{p-2} \leq (|a|^2+|b|^2)^{\frac{p-2}{2}} \leq 2^\frac{2-p}{2} (|a|+|b|)^{p-2},
    \end{equation*}
    with the relevant inequality in \Cref{two-cases-estimate-p-lemma} and \eqref{nonlinearity-estim-4}, we arrive at 
    \begin{equation*}
        \int_{B_{2R}} |V(\nabla u) - V(\nabla \bar{u})|^2 dx  \leq  \begin{cases}
            C\displaystyle R^{N+\tfrac{p\gamma}{2-\gamma}}  \quad \: \textnormal{if} \quad 0 < \gamma \leq 1 \\ \\
            C \displaystyle R^{N + p} \quad \ \quad \: \textnormal{if} \quad 1 \leq \gamma < p,
        \end{cases}
    \end{equation*}
    which completes the proof. 
\end{proof}
 
With the previous estimate, the desired gradient regularity follows at once.

\begin{theorem} \label{interior-reg-sing}
    If $u$ is a local minimizer of $\Jd[\cdot]$ with $0 \leq \delta \leq \delta_0$ and $1 < p <2$, then $u \in C^{1,\alpha}_{\textnormal{loc}}(\Omega)$ for some $\alpha \in (0,1)$. Moreover, there exists a constant $C_1>0$, depending on  $\lambda _\pm,N,p,\gamma,\delta_0, \|u\|_{L^\infty(\Omega')}$, and $\textnormal{dist}(\Omega',\partial \Omega)$, such that 
     \begin{equation}
    \|u\|_{C^{1,\alpha}(\Omega')} \leq C_1.
    \end{equation}
\end{theorem}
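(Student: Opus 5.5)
The plan is to run the Campanato argument of \Cref{interior-reg}, but with the quantity $V(\nabla u)$ in place of $\nabla u$. The natural excess functional in the singular range is
\[
\phi(s):=\int_{B_s}\big|V(\nabla u)-(V(\nabla u))_s\big|^2\,dx .
\]
The proof then has three steps: a decay estimate for $\phi$, an iteration giving Campanato growth, and a transfer of the resulting regularity from $V(\nabla u)$ back to $\nabla u$.

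\textbf{Step 1: decay of $\phi$.} Fix $B_{4R}\Subset\Omega$ and let $\bar u$ be the $p$-harmonic replacement of $u$ in $B_{2R}$. I will use the known decay estimate for $p$-harmonic functions in $V$-form, valid for $1<p<2$ (Duzaar--Mingione, or DiBenedetto--Manfredi):
\[
\int_{B_r}\big|V(\nabla\bar u)-(V(\nabla\bar u))_r\big|^2\le c\Big(\tfrac rR\Big)^{N+2\alpha_p}\int_{B_R}\big|V(\nabla\bar u)-(V(\nabla\bar u))_R\big|^2 .
\]
If I want to cite only \Cref{harm-grad-osc-estim} as stated, I would instead invoke its $V$-analogue from \cite{duzaar2004p}. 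I then split $V(\nabla u)=V(\nabla\bar u)+\big(V(\nabla u)-V(\nabla\bar u)\big)$ exactly as in \eqref{grad-u-avg-estim1}, using that averages minimize the $L^2$ deviation. Together with \Cref{grad-u-2-estim-sing}, this gives, for $0<r\le R$,
\[
\phi(r)\le A\Big(\tfrac rR\Big)^{N+2\alpha_p}\phi(R)+B\,R^{N+\beta},\qquad \beta=\tfrac{p\gamma}{2-\gamma}\ \text{or}\ p .
\]

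\textbf{Step 2: iteration.} Applying \Cref{Algebraic-ineq} yields $\phi(r)\le Cr^{N+2\alpha'}$ with $\alpha'=\min\{\alpha_p-\epsilon,\beta/2\}$. By Campanato's characterization, $V(\nabla u)\in C^{0,\alpha'}_{\rm loc}$. The local bounds come from \Cref{uniform-bound} and \Cref{Morrey}, and they determine the dependence of the constants.

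\textbf{Step 3: back to $\nabla u$.} The inverse map is $V^{-1}(b)=|b|^{\frac{2-p}{p}}b$. Since $\frac{2-p}{p}>0$ for $p<2$, the map $V^{-1}$ is locally Lipschitz; on bounded sets one has $|V^{-1}(a)-V^{-1}(b)|\le C(|a|+|b|)^{\frac{2-p}{p}}|a-b|$. Boundedness of $V(\nabla u)$ follows from its H\"older continuity on compact sets. Hence $\nabla u=V^{-1}(V(\nabla u))$ is $C^{0,\alpha'}$ with $\alpha=\alpha'$, and the norm bound is $C_1$.

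The main obstacle is Step 1. One must justify the $V$-decay estimate for $p$-harmonic functions and control the cross term involving $(V(\nabla\bar u))_R-(V(\nabla u))_R$. I would try \cite{duzaar2004p} first, since it also supplies \eqref{V-ineq}. If that estimate is unavailable, the fallback is to use $C^{1,\alpha_p}$ sup-bounds for $\nabla\bar u$ on $B_R$. Combined with \eqref{V-ineq}, these would compare $|V(\nabla\bar u)-V(\nabla\bar u)(0)|$ with $|\nabla\bar u-\nabla\bar u(0)|$. This requires handling the degeneracy where $|\nabla\bar u|$ is small, which is exactly where working with $V$ rather than with $\nabla u$ directly pays off.
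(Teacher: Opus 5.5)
Your proposal is correct and follows the paper's proof: the $V$-form of the $p$-harmonic decay estimate from \cite{duzaar2004p}, the comparison bound of \Cref{grad-u-2-estim-sing}, the iteration \Cref{Algebraic-ineq} applied to the $V$-excess, and then Campanato's characterization. The only difference is that you justify the return from $V(\nabla u)$ to $\nabla u$ directly, via the local Lipschitz continuity of $V^{-1}(b)=|b|^{\frac{2-p}{p}}b$ for $p<2$, whereas the paper simply cites \cite[Lemma 3]{duzaar2004p}.
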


\begin{proof}
Take $B_{4R} \Subset \Omega$ and let $\bar{u}$ be the $p$-harmonic replacement of $u$ in $B_{2R}$. By \cite[Proposition 3]{duzaar2004p}, \Cref{harm-grad-osc-estim} also holds if $\nabla \bar{u}$ is replaced by the quantity $V(\nabla \bar{u})$. Thus, in view of \Cref{grad-u-2-estim-sing} and following the same reasoning as in the proof of \Cref{interior-reg}, one deduces, for all $0 \leq r \leq R$,
    \begin{align*}
        \int_{B_r}  \big|V(\nabla u) - V(\nabla u)_r\big|^2 dx &\leq  A\left(\dfrac{r}{R} \right)^{N+2\alpha_p} \int_{B_R} \big|V(\nabla u) - V(\nabla u)_R\big|^2 dx \nn \\
    & \quad + B \begin{cases}
            \displaystyle R^{N+\tfrac{p\gamma}{2-\gamma}}  \quad \ \ \textnormal{if} \quad  0 < \gamma \leq 1 \\ \\
            \displaystyle
            R^{N+p} \quad \quad \quad   \textnormal{if} \quad    1 \leq \gamma < p,
        \end{cases}  
    \end{align*}
    for some constants $A,B>0$. As in the conclusion of the proof of \Cref{interior-reg}, we set 
     \begin{equation*}
        \phi(s):= \int_{B_s} \big|V(\nabla u) - V(\nabla u)_s\big|^2 dx,\end{equation*}
     and apply \Cref{Algebraic-ineq} to conclude 
    \begin{equation*}
    \phi(r) \leq Cr^{N+2 \alpha},
    \end{equation*}
    for all $0 \leq r \leq R$, where  
    \begin{equation*}
        \alpha := 
        \begin{cases} 
        \tfrac{p\gamma}{2(2-\gamma)} \quad  \ \ \textnormal{if}  \quad 0 < \gamma \leq 1 \quad \textnormal{and}  \quad \tfrac{p\gamma}{2(2-\gamma)} < \alpha_p \\
        \alpha_p - \epsilon  \quad \: \    \textnormal{if}  \quad 0 < \gamma \leq 1 \quad \textnormal{and}  \quad \tfrac{p\gamma}{2(2-\gamma)} 
        \geq \alpha_p \\
        \tfrac{p}{2} \quad \quad    \quad \  \: \textnormal{if}  \quad 1 \leq \gamma < p \quad \textnormal{and} \quad \ \ \quad  \ \tfrac{p}{2} < \alpha_p \\
        \alpha_p - \epsilon \quad \: \    \textnormal{if}  \quad 1 \leq \gamma < p \quad \textnormal{and} \quad \ \ \quad \  \tfrac{p}{2} \geq \alpha_p ,
        \end{cases}
    \end{equation*}
for any $0 < \epsilon \ll 1$. Hence, $V(\nabla u)$ is locally $\alpha$-H\"older continuous and so is $\nabla u$ (cf. \cite[Lemma 3]{duzaar2004p}).
\end{proof}

We conclude this section with the following straightforward consequence of the continuity of minimizers for $\J[\cdot]$: the Euler-Lagrange equation in the noncoincidence sets. Recall that $F'_\gamma$ is given by \eqref{RHS-F}.

\begin{theorem} \label{Dist-sol}
    A local minimizer $u$ of $\J[\cdot]$ satisfies 
    $\Delta_pu_\pm =F'_\gamma(u_\pm)$ in $\{ u_\pm>0\} \cap \Omega$ in the sense of distributions.  
\end{theorem}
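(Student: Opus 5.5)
The plan is to derive the Euler--Lagrange equation by a first-variation argument, localized to the open set where $u$ has a definite sign. By \Cref{interior-reg} and \Cref{interior-reg-sing} (with $\delta = 1$), $u$ is continuous, so $\{u>0\}\cap\Omega$ is open. I treat the positive phase; the negative phase follows by applying the same argument to $-u$, which minimizes the functional $\underline{\J}$ with $\lambda_\pm$ interchanged (as in \Cref{uniform-bound}).

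Fix $\varphi \in C_0^\infty(\{u>0\}\cap\Omega)$ and set $K := \supp\varphi$.
\begin{enumerate}
\item Since $K$ is compact and $u$ is continuous and positive on it, there is $m>0$ with $u \geq m$ on $K$.
\item For $|\varepsilon| < m/(2\|\varphi\|_{L^\infty})$, the perturbation $u+\varepsilon\varphi$ satisfies $u+\varepsilon\varphi \geq m/2 > 0$ on $K$, and it coincides with $u$ off $K$. Hence only the $\lambda_+$ branch of $F_\gamma$ is ever involved on $K$.
\item By \Cref{local-min}, the function $i(\varepsilon) := \J[u+\varepsilon\varphi]$ has a minimum at $\varepsilon=0$.
\item Differentiate under the integral sign. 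The gradient term is the standard one: $|\nabla u + \varepsilon\nabla\varphi|^p/p$ has $\varepsilon$-derivative dominated by $C(|\nabla u|+|\nabla \varphi|)^{p-1}|\nabla\varphi| \in L^1$. For the potential term, $\frac{d}{d\varepsilon}\lambda_+(u+\varepsilon\varphi)^\gamma = \gamma\lambda_+(u+\varepsilon\varphi)^{\gamma-1}\varphi$. On $K$ this is bounded by $\gamma\lambda_+\max\{(m/2)^{\gamma-1}, (2C_0)^{\gamma-1}\}\|\varphi\|_\infty$, using the bound $C_0$ from \Cref{uniform-bound}; this covers both $\gamma<1$ and $\gamma\geq 1$.
\item Setting $i'(0)=0$ gives
\[\int |\nabla u|^{p-2}\nabla u\cdot\nabla\varphi \, dx + \int \gamma\lambda_+ u^{\gamma-1}\varphi \, dx = 0,\]
which is precisely $\Delta_p u = F'_\gamma(u)$ in $\mathcal{D}'(\{u>0\}\cap\Omega)$. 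Since $u = u_+$ there, this is the claim for $u_+$.
\end{enumerate}

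The only delicate point is the singularity of $s\mapsto s^{\gamma-1}$ at $0$ when $\gamma<1$. This is exactly why continuity of $u$ is indispensable: it provides the positive lower bound $m$ on the support of the test function, which makes the dominated convergence step legitimate. Local integrability of $F'_\gamma(u)$ up to the free boundary is not claimed here and is not needed.
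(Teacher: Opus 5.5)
Your proof is correct and uses the same first-variation argument as the paper. Continuity of $u$ gives a positive lower bound for $u$ on the support of the test function, so $u+\varepsilon\varphi$ stays in the positive phase and the potential term is smooth there, which lets you differentiate $\varepsilon\mapsto\J[u+\varepsilon\varphi]$ at $\varepsilon=0$. The differences are cosmetic: you take an arbitrary $\varphi\in C_0^\infty(\{u>0\}\cap\Omega)$ and use compactness of its support instead of working in a small neighborhood of each point $x_0$, and you perturb $u$ itself rather than $u_+$, which is slightly cleaner.
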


 \begin{proof}
     We display the argument on $\{u_+>0\}$, the reasoning for the negative phase being similar. For any $x_0 \in \{u_+>0\}$, the continuity of $u_+$ implies that $u_+ \geq u_+(x_0)/2 >0$ in some neighborhood $\Omega_{x_0} \Subset \{u_+>0\}$. Consequently, $F'_\gamma(u_+) = \gamma \lambda_+ u_+^{\gamma-1}$ is bounded therein for any $\gamma>0$. Taking $\varphi \in C_0^\infty(\Omega_{x_0})$ and $s \in \R$ such that $|s|$ is sufficiently small, we have that $u_+ + s \varphi \geq u_+(x_0)/4>0$ in $\Omega_{x_0}$. The optimality of $u_+$, therefore, yields
     \begin{equation*} 
        0 = \dfrac{d}{ds} \J[u_+ + s\varphi] \Big|_{s=0} = \int_{ \{u_+>0\} \cap \Omega}      |\nabla u_+|^{p-2} \nabla u_+ \cdot \nabla \varphi  + \gamma \lambda_+u_+^{\gamma-1}\varphi dx.
\end{equation*}
 \end{proof}

The local integrability of $F_\gamma'(u)$ is demonstrated in the following proposition and is employed in due course to obtain an upper Lebesgue density estimate for the zero-gradient free boundaries. 

 \begin{proposition} \label{Loc-integ}
     $F'_\gamma(u) \in L^1_{\textnormal{loc}}(\Omega)$.
 \end{proposition}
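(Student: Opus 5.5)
For $\gamma\ge 1$ there is nothing to prove: $u$ is locally bounded by \Cref{uniform-bound}, so $|F'_\gamma(u)|\le \gamma(\lambda_++\lambda_-)\|u\|_{L^\infty(\Omega')}^{\gamma-1}$ on $\Omega'$. The real case is $0<\gamma<1$, where $F'_\gamma(u)$ is unbounded near $\Gamma$. By symmetry (replace $u$ by $-u$ and swap $\lambda_\pm$, as in the proof of \Cref{uniform-bound}) it suffices to show $u_+^{\gamma-1}\chi_{\{u>0\}}\in L^1_{\textnormal{loc}}(\Omega)$. The idea is to test the Euler--Lagrange equation of \Cref{Dist-sol} in $\{u>0\}$ with a truncation that vanishes near the free boundary, and then let the truncation level go to zero. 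Monotone convergence will then give the integrability.

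Fix $\Omega'\Subset\Omega''\Subset\Omega$ and a cutoff $\zeta\in C_0^\infty(\Omega'')$ with $0\le\zeta\le1$ and $\zeta=1$ on $\Omega'$. For $\varepsilon>0$ set
\[
\psi_\varepsilon:=\min\{(u-\varepsilon)_+/\varepsilon,\,1\}.
\]
This is Lipschitz, lies in $W^{1,p}$ by \Cref{interior-reg}/\Cref{interior-reg-sing}, and is supported in $\{u\ge\varepsilon\}$. On that set $u$ is bounded away from zero, so the equation of \Cref{Dist-sol} can be tested with $\varphi=\zeta\psi_\varepsilon$ after a standard density argument (cover $\textnormal{supp}\,\varphi$ by the neighborhoods used there). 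This gives
\[
\gamma\lambda_+\int u^{\gamma-1}\zeta\psi_\varepsilon\,dx=-\int |\nabla u|^{p-2}\nabla u\cdot\nabla\zeta\,\psi_\varepsilon\,dx-\frac1\varepsilon\int_{\{\varepsilon<u<2\varepsilon\}}\zeta|\nabla u|^p\,dx .
\]
The last term is nonpositive, so we may drop it. The first term is bounded by $\|\nabla u\|_{L^\infty(\Omega'')}^{p-1}\|\nabla\zeta\|_{L^1}$, which is controlled uniformly in $\varepsilon$ by the $C^{1,\alpha}$ bound $C_1$ of the interior regularity theorems.

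Now let $\varepsilon\to0$. Since $\psi_\varepsilon\uparrow\chi_{\{u>0\}}$ pointwise and everything is nonnegative, monotone convergence yields
\[
\gamma\lambda_+\int_{\Omega'\cap\{u>0\}}u^{\gamma-1}\,dx\le C(C_1,\Omega',\Omega'').
\]
The same argument for the negative phase completes the proof.

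The main obstacle I expect is justifying that $\zeta\psi_\varepsilon$ is an admissible test function in \Cref{Dist-sol}, since that result is stated only for smooth test functions compactly supported in $\{u>0\}$. I would handle this by mollifying $\zeta\psi_\varepsilon$. This keeps the support inside $\{u>\varepsilon/2\}$, where $u^{\gamma-1}$ is bounded and $|\nabla u|^{p-1}\in L^\infty$, so passing to the limit in both integrals is immediate. The sign of the term containing $\nabla\psi_\varepsilon$ is what makes the argument work. If one instead tried the variational competitor $u-\zeta\min\{u_+,\varepsilon\}$ directly, the nonlinearity term would have the wrong sign.
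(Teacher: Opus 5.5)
Your proof is correct and is essentially the paper's argument. Both test the equation of \Cref{Dist-sol} with a cutoff times a truncation of $u_\pm$ that vanishes near $\{u_\pm=0\}$, discard the nonpositive term coming from the derivative of the truncation, bound the cutoff term uniformly, and pass to the limit by monotone convergence. The only differences are interchangeable: the paper uses a smooth $\psi_\delta$ and controls $\int|\nabla u|^{p-1}|\nabla\xi|\,dx$ via \Cref{W1p-bound}, whereas you use a piecewise-linear truncation and the $C^{1,\alpha}$ bound.

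Your closing aside is inaccurate, though this does not affect your proof. For $v=u-\zeta\min\{u_+,\varepsilon\}$, minimality gives $\int (F_\gamma(u)-F_\gamma(v))\,dx\le \frac1p\int(|\nabla v|^p-|\nabla u|^p)\,dx\le C\varepsilon$. The nonnegative nonlinearity difference therefore sits on the favorable side. Since $u^\gamma-(u-\varepsilon)^\gamma\ge\gamma\varepsilon u^{\gamma-1}$ on $\{u\ge\varepsilon\}\cap\Omega'$, that direct variational route also works.
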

 \begin{proof}
     We proceed by approximation as in \cite{phillips1983hausdoff}. Consider a function $\psi \in C^\infty(\R)$  satisfying $\psi(t)= 0$ if $t\leq 1$ and $\psi(t)= 1$ if $t\geq 2$ with $\psi'(t)\geq 0$. For any $\delta>0$, define $\psi_\delta(t) := \psi(t/\delta)$. Fix $\Omega' \Subset \Omega$ and take a cutoff function $\xi \in C_0^\infty(\Omega)$ such that $0 \leq \xi \leq 1$ in $\Omega$ and $\xi = 1$ in $\Omega'$. From the integral identity of \Cref{Dist-sol} with $ \varphi=\psi_\delta(u_\pm) \xi $ as a test function, one deduces
     \begin{align*}
         \int_{ \{u_\pm>0\} \cap \Omega}\gamma \lambda_\pm u_\pm^{\gamma-1}  \psi_\delta(u_\pm) \xi  dx &= - \int_{ \{u_\pm>0\} \cap \Omega}      |\nabla u_\pm|^p \psi_\delta'(u_\pm) \xi  dx \\
         & \ \ \ - \int_{ \{u_\pm>0\} \cap \Omega}      |\nabla u_\pm|^{p-2} \nabla u_\pm  \cdot  \psi_\delta(u_\pm) \nabla \xi  dx  \\
         & \leq \int_{ \{u_\pm>0\} \cap \supp{\nabla \xi}}      |\nabla u_\pm|^{p-1} |\nabla \xi|  dx,
     \end{align*}
    since $\xi$ and $\psi_\delta'$ are nonnegative. The last term is obviously finite due to \Cref{W1p-bound}. By the monotone convergence theorem, we may send $\delta \to 0$ so that $\psi_\delta(u_\pm) \to \chi_{\{u_\pm>0\}}$ and conclude $\gamma \lambda_\pm u_\pm^{\gamma-1} \chi_{\{u_\pm>0\}} \in L^1_{\textnormal{loc}}(\Omega)$.
 \end{proof}

\section{Growth and decay estimates}  \label{growth-sec}

In this section, we establish growth and nondegeneracy estimates for minimizers of $\J[\cdot]$ at the free boundaries.  

\subsection{Growth rate} The following theorem establishes the growth rate of minimizers away from $\Gamma_0$, comprising the one-phase (positive or negative) as well as the two-phase branching free boundaries. An illustration of optimality is provided thereafter.

\begin{theorem}\label{Optimal-growth-thm}
For any local minimizer $u$ of $\J[\cdot]$ and any $\Omega' \Subset \Omega$,
there exist constants $\rho_0<\textnormal{dist}(\Omega',\partial\Omega)$ and $C_2>0$ such that, for every
$z\in \Gamma_0\cap\Omega'$ and every $0<r<\rho_0$,
\[
\sup_{B_r(z)} |u| \le C_2 r^{1+\tau^\ast},
\]
where
\[\tau^\ast:= \min \left\{ \alpha_p - \epsilon, \frac{\gamma}{p-\gamma}\right\},\]
for any $0< \epsilon \ll 1$. The constants $\rho_0$ and $C_2$ depend only on $N,p,\gamma,\lambda_\pm$,
$\textnormal{dist}(\Omega',\partial\Omega)$, and the local $C^{1,\alpha}$ bounds from
\Cref{interior-reg,interior-reg-sing}.
\end{theorem}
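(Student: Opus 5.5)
The plan is the standard dyadic-decay argument driven by compactness, with $\tau^\ast$ as the target exponent. It suffices to prove a discrete decay statement. There is a universal $\mu\in(0,1/2)$ (equivalently, $\mu = 2^{-k}$) such that, for every local minimizer $u$ with $\|u\|_{L^\infty(B_1)}\le 1$ and every $z\in\Gamma_0$ with $B_1(z)\subset\Omega$,
\[
\sup_{B_{\mu^{j}}(z)}|u|\le \mu^{j(1+\tau^\ast)}\qquad\text{for all } j\ge 0 .
\]
The general case reduces to this one by the normalization $u\mapsto u(\rho_0 x+z)/(K\rho_0^{1+\tau^\ast})$ with $K$ depending on $\|u\|_{L^\infty}$. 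By \Cref{scaling-rmk}, this map sends minimizers of $\J$ to minimizers of $\J_\delta$ with $\delta=\rho_0^{p}(K\rho_0^{1+\tau^\ast})^{\gamma-p}$. The reason for the exponent is that $(1+\tau^\ast)(p-\gamma)\le p$ precisely because $\tau^\ast\le\gamma/(p-\gamma)=\tau$. Hence $\delta\le\delta_0$ stays bounded along the iteration, and the uniform $C^{1,\alpha}$ bounds of \Cref{interior-reg,interior-reg-sing} apply to every rescaled function.

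The first step is a flatness lemma. For every $\eta>0$ there is $\delta_\eta>0$ such that the following holds: if $v$ minimizes $\J_\delta$ in $B_1$ with $\delta\le\delta_\eta$, $\|v\|_{L^\infty(B_1)}\le1$, and $v(0)=|\nabla v(0)|=0$, then there is a $p$-harmonic $h$ in $B_{1/2}$ with $h(0)=|\nabla h(0)|=0$ and $\|v-h\|_{L^\infty(B_{1/2})}\le\eta$. I would prove it by contradiction. Take a sequence $v_k$ with $\delta_k\to0$. The uniform $C^{1,\alpha}$ bound from \Cref{interior-reg,interior-reg-sing} gives $v_k\to h$ in $C^{1}$ locally. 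The limit $h$ minimizes the $p$-Dirichlet energy, by passing to the limit in the minimality inequality (the potential term vanishes as $\delta_k\to0$). The conditions $h(0)=\nabla h(0)=0$ are preserved under $C^1$ convergence.

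The second step is to use the regularity of $p$-harmonic functions. By \Cref{harm-grad-osc-estim} and the resulting $C^{1,\alpha_p}$ estimate, with $\|h\|_\infty\le1$ and $h(0)=\nabla h(0)=0$,
\[
\sup_{B_\mu}|h|\le C\mu^{1+\alpha_p}.
\]
I would fix $\mu$ so small that $C\mu^{1+\alpha_p}\le\frac12\mu^{1+\alpha_p-\epsilon}$, which is possible because of the $\epsilon$ loss, and then choose $\eta=\frac12\mu^{1+\tau^\ast}$. This yields $\sup_{B_\mu}|v|\le\mu^{1+\tau^\ast}$.

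The third step is the iteration. Set $v_{j+1}(x)=v_j(\mu x)/\mu^{1+\tau^\ast}$. The induction hypothesis gives $\|v_{j+1}\|_{L^\infty(B_1)}\le1$, and $z$ remains a point with vanishing value and gradient. By \Cref{scaling-rmk}, the new coefficient is $\delta_{j+1}=\delta_j\,\mu^{p-(1+\tau^\ast)(p-\gamma)}\le\delta_j$, again because $\tau^\ast\le\tau$. Choosing $\rho_0$ small makes $\delta_0\le\delta_\eta$ at the start, so the smallness condition persists at every step. Interpolating between dyadic radii gives the statement for all $0<r<\rho_0$, with $C_2$ depending on $\mu$ and $\rho_0$.

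The main obstacle is the flatness lemma. Two points need care there. First, the limiting $h$ must be $p$-harmonic while retaining $\nabla h(0)=0$; this requires $C^1$ compactness that is uniform in $\delta$, so \Cref{interior-reg,interior-reg-sing} must be used with constants independent of $\delta\in[0,\delta_0]$. Second, the $C^{1,\alpha_p}$ decay of $h$ is available only with an $\epsilon$ loss, since $\alpha_p$ is the optimal but generally non-attained exponent; this is why $\alpha_p-\epsilon$ appears in $\tau^\ast$. Sign changes cause no difficulty, because the argument uses no maximum principle: only compactness and the vanishing of $u$ and $\nabla u$ at $z$.
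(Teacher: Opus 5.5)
Your argument is correct, but it takes a different route from the paper. The paper uses a single global blow-up. Assuming the estimate fails, it picks for each $k$ a minimizer $u_k$, a point $z_k\in\Gamma_0(u_k)$, and a \emph{maximal} radius $r_k$ with $\sup_{B_{r_k}(z_k)}|u_k|=k\,r_k^{1+\tau^\ast}$. It then rescales by $S_k=k\,r_k^{1+\tau^\ast}$, so that $\sup_{B_1}|v_k|=1$, $v_k(0)=|\nabla v_k(0)|=0$, $\sup_{B_R}|v_k|\le R^{1+\tau^\ast}$ for all $R\ge1$, and $\delta_k=r_k^pS_k^{\gamma-p}\to0$. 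The $C^1_{\mathrm{loc}}(\mathbb R^N)$ limit is an entire $p$-harmonic function with growth $R^{1+\tau^\ast}$, $\tau^\ast<\alpha_p$, vanishing to first order at the origin. The Liouville-type Lemma~\ref{Lv} forces it to vanish, which is the contradiction.

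You instead confine the compactness to a one-scale approximation lemma at unit scale and then iterate geometrically. You therefore need no entire solution, no growth control at infinity, and no maximal-radius selection, which is the least transparent step of the paper's proof. The two conditions on $\tau^\ast$ also appear exactly where they are used: $\tau^\ast\le\tau$ gives the monotonicity $\delta_{j+1}\le\delta_j$ of the potential coefficient, and $\tau^\ast<\alpha_p$ lets you absorb the $C^{1,\alpha_p}$ constant into the factor $\tfrac12$. You also get an explicit constant $C_2=K\mu^{-(1+\tau^\ast)}$. The paper's version is shorter and packages the $p$-harmonic regularity into a Liouville statement. Otherwise both proofs rest on the same ingredients: uniform-in-$\delta$ $C^{1,\alpha}$ bounds, the scaling of \Cref{scaling-rmk}, and the $C^{1,\alpha_p}$ estimate for $p$-harmonic functions.

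One small correction is needed. In the case $\tau^\ast=\tau$, which is the main case of interest, the exponent $p-(1+\tau^\ast)(p-\gamma)$ vanishes. Shrinking $\rho_0$ then does not make the initial coefficient small. Instead take $K$ large: $\delta=K^{\gamma-p}\rho_0^{\,p-(1+\tau^\ast)(p-\gamma)}$, and $K$ only needs to dominate $\|u\|_{L^\infty}\rho_0^{-(1+\tau^\ast)}$, so this is allowed.
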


\begin{proof}
We argue by contradiction. Suppose the conclusion fails. Then, for each \(k\in\mathbb N\),
there exists a local minimizer \(u_k\), a point
\[
z_k\in \Gamma_0(u_k)\cap \Omega',
\]
and a radius \(r_k\in (0,\rho_0/2)\) such that
\begin{equation}\label{Sk-def}
S_k:=\sup_{B_{r_k}(z_k)}|u_k|=k\,r_k^{1+\tau^\ast}.
\end{equation}
Without loss of generality, we may choose \(r_k\) maximal with such a property, and hence
\begin{equation}\label{maximal-rk}
\sup_{B_r(z_k)}|u_k|\le k\,r^{1+\tau^\ast}
\qquad \text{for every } \ \ \  r_k\le r<\rho_0/2.
\end{equation}
Define
\[
v_k(x):=\frac{u_k(r_kx+z_k)}{S_k},
\qquad x\in B_{\rho_0/(2r_k)}.
\]
Then, it is immediate that
\begin{equation}\label{vk-norm}
\sup_{B_1}|v_k|=1,
\end{equation}
and, since \(z_k\in \Gamma_0(u_k)\),
\begin{equation}\label{vk-zero}
v_k(0)=0,
\qquad
\nabla v_k(0)=0.
\end{equation}
Moreover, for every fixed \(R\ge1\), if \(k\) is sufficiently large so that
\(R<\rho_0/(2r_k)\), then by \eqref{maximal-rk},
\begin{equation}\label{vk-growth}
\sup_{B_R}|v_k|
=\frac{\sup_{B_{Rr_k}(z_k)}|u_k|}{S_k}
\le \frac{k(Rr_k)^{1+\tau^\ast}}{S_k}
= R^{1+\tau^\ast}.
\end{equation}
By the scaling invariance (cf. \Cref{scaling-rmk}), each \(v_k\) is a local minimizer of \(\J_{\delta_k}[\cdot]\) with
\[
\delta_k=r_k^p S_k^{\gamma-p},
\]
and we obtain, as $k \to \infty$,
\[
\delta_k
=r_k^p\bigl(k\,r_k^{1+\tau^\ast}\bigr)^{\gamma-p}
=k^{-(p-\gamma)}\,r_k^{\,p-(1+\tau^\ast)(p-\gamma)}
\leq k^{-(p-\gamma)} \,(\rho_0/2)^{\,p-(1+\tau^\ast)(p-\gamma)} \to 0,
\]
since $p-(1+\tau^\ast)(p-\gamma) \geq 0$. Additionally, we have 
\[\sup_{B_{2R}} |v_k| \leq 2^{1+\tau^\ast} R^{1+\tau^\ast}\] 
ensuring that $\{v_k\}_{k\in \mathbb{N}}$ is uniformly (with respect to $k$) bounded in $B_{2R}$, for any fixed $R\geq1$. Thus, by \Cref{interior-reg} and \Cref{interior-reg-sing}, the family
\(\{v_k\}_{k\in \mathbb{N}}\) is uniformly bounded in \(C^{1,\alpha}(B_R)\), for some \(\alpha\in(0,1)\)
independent of \(k\), and therefore, up to a subsequence,
\[
v_k\to v_\infty
\qquad \text{in } C^1_{\mathrm{loc}}(\mathbb R^N).
\]
Passing to the limit in \eqref{vk-norm}, \eqref{vk-zero}, and \eqref{vk-growth}, we get
\[
\sup_{B_1}|v_\infty|=1,
\qquad
v_\infty(0)=0,
\qquad
\nabla v_\infty(0)=0,
\]
and
\[
\sup_{B_R}|v_\infty|\le R^{1+\tau^\ast}
\qquad \text{for every } \ \ \ R\ge1.
\]
We claim that \(v_\infty\) is $p$-harmonic in \(\mathbb R^N\). Indeed, for every
\(\varphi\in C_0^\infty(B_R)\), the minimality of \(v_k\) gives
\[
\int_{B_R}\frac{|\nabla v_k|^p}{p}\,dx
+\delta_k\int_{B_R}F_\gamma(v_k)\,dx
\le
\int_{B_R}\frac{|\nabla(v_k+\varphi)|^p}{p}\,dx
+\delta_k\int_{B_R}F_\gamma(v_k+\varphi)\,dx.
\]
Since \(\delta_k\to0\), as well as \(v_k\to v_\infty\) and
\(\nabla v_k\to \nabla v_\infty\) locally uniformly, we may pass to the limit and infer that
\[
\int_{B_R}\frac{|\nabla v_\infty|^p}{p}\,dx
\le
\int_{B_R}\frac{|\nabla(v_\infty+\varphi)|^p}{p}\,dx,
\qquad \forall \varphi\in C_0^\infty(B_R).
\]
Thus, \(v_\infty\) is $p$-harmonic in \(B_R\). Since \(R\) is arbitrary,
\(v_\infty\) is $p$-harmonic in \(\mathbb R^N\). Now, we apply \Cref{Lv} using condition \(\tau^\ast<\alpha_p\), and conclude \(v_\infty\equiv 0\), which contradicts \(\sup_{B_1}|v_\infty|=1\). The theorem is proved.
\end{proof}

\subsubsection{Illustration of optimality}

The growth exponent granted by \Cref{Optimal-growth-thm} takes the form of a competition between a negative perturbation of the quantity $1+\alpha_p$ and $1+\tau$, with $\tau:= \gamma/(p-\gamma)$. The optimality can be illustrated provided $\tau \leq \alpha_p-\epsilon$, for $\epsilon \ll 1$, as follows. Consider the explicit function 
\begin{equation} \label{explicit-example}
    u_0(x):= C_+ (x_N)_+^{1+\tau} - C_- (x_N)_-^{1+\tau}, \qquad C_\pm:= \left(\dfrac{\lambda_\pm p(p-\gamma)^{p}}{p^{p}(p-1)} \right)^{\frac{1}{p-\gamma}},  
\end{equation}
where $x_N$ is the $N$th coordinate of $x$ and $C_\pm$ are obviously positive. We first demonstrate that $u_0$ is a local minimizer of $\J[\cdot]$. To this end, we define $\mathbf{F}(t):= \int_0^t \mathbf{F}'(s) ds$ with $t \in \R$ and
\begin{equation*}
    \mathbf{F}'(t):= \left( \dfrac{p}{p-1} F_\gamma(t)\right)^{\frac{p-1}{p}},
\end{equation*}
where $F_\gamma \geq 0$ is given by \eqref{Fg}.  Recalling \Cref{local-min}, let $\Omega' \Subset \Omega$ and $v \in W^{1,p}(\Omega')$ be such that $u_0-v \in W_0^{1,p}(\Omega')$. By construction,  
\begin{equation}\label{psi-asump}
    \dfrac{\big(\mathbf{F}'(t)\big)^{\frac{p}{p-1}}}{p/(p-1)} = F_\gamma(t).
\end{equation}
Moreover, the choices of $C_\pm$ ensure that
\begin{equation}\label{u0-asump}
    \big|\nabla u_0\big|^{p} = \big(\mathbf{F}'(u_0)\big)^{\frac{p}{p-1}}\geq0,
\end{equation}
where 
\begin{equation*} \label{u0-deriv}
    \nabla u_0 = \partial_{x_N} u_0 = \left[ C_+(1+\tau) (x_N)^{\tau} \chi_{\{x_N>0\}} + C_-(1+\tau) (-x_N)^{\tau} \chi_{\{x_N<0\}} \right]e_N.
\end{equation*}
Notice, furthermore, that $\mathbf{F}(w) \in W^{1,1}(\Omega')$ as long as $w \in W^{1,p}(\Omega')$. Hence, Young's inequality provides
\begin{equation} \label{v-ineq}
 \text{div} (\mathbf{F}(v) e_N) =   \mathbf{F}'(v) \partial_{x_N} v \leq \dfrac{|\nabla v|^p}{p} + F_\gamma(v),
\end{equation}
a.e. in $\Omega'$, where we have used \eqref{psi-asump}. On the other hand, \eqref{u0-asump} enforces a.e. in $\Omega'$ that 
\begin{equation} \label{u0-eq}
 \text{div} (\mathbf{F}(u_0) e_N) =   \mathbf{F}'(u_0) \partial_{x_N} u_0 = \dfrac{|\nabla u_0|^p}{p} + F_\gamma(u_0).
\end{equation}
 By the divergence theorem, $v$ satisfies
 \begin{equation*}
     \int_{\Omega'} \text{div} (\mathbf{F}(v) e_N) dx = \int_{\partial \Omega'} \mathbf{F}(v)e_N  \cdot \nu d \mathcal{H}^{N-1},  
 \end{equation*}
 and so does $u_0$. Since $u_0$ and $v$ agree on $\partial \Omega'$ in the sense of traces, so do $\mathbf{F}(u_0)$ and $\mathbf{F}(v)$; hence, one deduces 
\begin{equation} \label{interm-psi-ineq}
      \int_{\Omega'}  \mathbf{F}'(u_0) \partial_{x_N} u_0 dx = \int_{\Omega'}  \mathbf{F}'(v) \partial_{x_N} v dx. 
 \end{equation}
Combining \eqref{v-ineq}, \eqref{u0-eq}, and \eqref{interm-psi-ineq}, we conclude in $\Omega'$ that
\begin{equation*}
    \J[u_0] = \int_{\Omega'}\mathbf{F}'(u_0) \partial_{x_N} u_0 dx = \int_{\Omega'}\mathbf{F}'(v) \partial_{x_N} v dx \leq \J[v],
\end{equation*}
as desired.
\\
Next, if $z \in \partial\{(u_0)_\pm>0 \} = \{x_N =0 \}$,  then $z_N=0$ and $u_0(z) = \nabla u_0(z) = 0$. Therefore, $z \in \Gamma_0$, and if $x \in B_r(z)$, \eqref{explicit-example} satisfies
\begin{equation} \label{sup-eq}
    \sup_{B_r(z)} (u_0)_\pm = C_\pm r^{1+\tau},
\end{equation}
showing that exponent $1+\tau$ is optimal.
Indeed, if not, there would exist $\tilde{\tau} > \tau$ such that 
\[\sup_{B_r(z)} (u_0)_\pm \leq \tilde{C}_\pm r^{1+\tilde{\tau}},\]
for every sufficiently small $r$ and some $\tilde{C}_\pm>0$ independent of $r$. Together with \eqref{sup-eq}, it necessitates 
\[C_\pm \leq \tilde{C}_\pm r^{\tilde{\tau}-\tau} \to 0,\]
as $r\to 0$, a contradiction to $C_\pm>0$ in \eqref{explicit-example}.

\begin{remark} \label{restricted}
Observe that $1+\tau = 1+\tau^\ast$ provided 
\[\frac{\gamma}{p-\gamma} < \alpha_p \quad \Longleftrightarrow \quad \gamma <\frac{p\alpha_p}{1+\alpha_p}.\] 
For $p=2$, this amounts to restricting $\gamma \in (0,1)$.
\end{remark}

\subsection{Nondegeneracy} In this subsection, we establish the decay rate for local minimizers away from $\Gamma$. This estimate is crucial for establishing the local porosity of $\Gamma_0$ and for deriving density estimates therein.  

\begin{theorem} \label{nondegeneracy-strong} 
If $u$ is a local minimizer of $\J[\cdot]$ with $0 < \gamma \leq 1$, then there exist constants $c_\pm>0$, depending on $p,\gamma$, $\lambda_\pm$, and $N$ such that if $x_0 \in \overline{\{u_\pm>0\}}$, then 
\begin{equation*}
        \sup_{B_r(x_0)} u_\pm \geq c_\pm r^{1+\tau},
\end{equation*}
for any $r < \textnormal{dist}(x_0,\partial \Omega)$.  
\end{theorem}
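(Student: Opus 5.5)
The plan is to argue by a Caffarelli-type barrier comparison, carried out variationally through the minimality of $u$ rather than through a pointwise comparison principle. For $\gamma<1$ the right-hand side $\gamma\lambda_+ s^{\gamma-1}$ is decreasing in $s$, so the classical comparison principle for $\Delta_p u=F'_\gamma(u)$ is unavailable. By symmetry (replace $u$ by $-u$ as in \Cref{uniform-bound}) it suffices to treat $u_+$. By continuity of $u$ (\Cref{interior-reg,interior-reg-sing}) it also suffices to take $x_0\in\{u>0\}$: a point of $\overline{\{u>0\}}$ is a limit of such points, and the estimate with a fixed constant passes to the limit.

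\textbf{Barrier.} With $\tau=\gamma/(p-\gamma)$ I set $\Psi(x):=c\,|x-x_0|^{1+\tau}$. A direct computation using $\tau(p-1)-1=(\gamma-1)(1+\tau)$ gives
\[
\Delta_p\Psi=\kappa\, c^{p-1}|x-x_0|^{(\gamma-1)(1+\tau)}=\kappa\, c^{p-\gamma}\,\Psi^{\gamma-1},\qquad \kappa=(1+\tau)^{p-1}\big(\tau(p-1)+N\big).
\]
Choosing $c=c_+(N,p,\gamma,\lambda_+)$ small makes $\kappa c^{p-\gamma}$ as small as needed compared with $\gamma\lambda_+$.

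\textbf{Contradiction argument.} Suppose $\sup_{B_r(x_0)}u_+<c\,r^{1+\tau}$. Then $u<\Psi$ on $\partial B_r(x_0)$, while $u(x_0)>0=\Psi(x_0)$, so the open set $D:=\{u>\Psi\}\cap B_r(x_0)$ is nonempty and $(u-\Psi)_+\in W^{1,p}_0(B_r(x_0))$. I test minimality against $v:=\min\{u,\Psi\}$, which is admissible (\Cref{local-min}). This gives
\[
\int_D \frac{|\nabla u|^p-|\nabla\Psi|^p}{p}\,dx\le\lambda_+\int_D\big(\Psi^\gamma-u^\gamma\big)\,dx .
\]
I then bound the left-hand side from below by convexity of $\xi\mapsto|\xi|^p/p$ and integration by parts against $(u-\Psi)_+$; for $p\ge2$ I would also keep the coercive remainder of \Cref{two-cases-estimate-p-lemma}. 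This yields a lower bound $\int_D\Delta_p\Psi\,(u-\Psi)\,dx\ge 0$ for the left-hand side, whereas the right-hand side is strictly negative on $D$ because $u>\Psi$ there. As written this gives no contradiction, since both sides have the same sign.

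\textbf{Main obstacle.} The difficulty is to produce a strict inequality: I need a positive quantity to be dominated by something that can be made small through $c$. The competing terms are $\lambda_+(u^\gamma-\Psi^\gamma)$, which is positive on $D$, and the barrier term $\kappa c^{p-\gamma}\Psi^{\gamma-1}(u-\Psi)$. The latter is not pointwise controlled near $x_0$, where $\Psi^{\gamma-1}$ blows up. My plan is to use the inequality $u^\gamma-\Psi^\gamma\ge \gamma u^{\gamma-1}(u-\Psi)$ (concavity, valid since $\gamma\le1$) only where $\Psi\ge\theta u$, and the crude bound $u^\gamma-\Psi^\gamma\ge(1-\theta^\gamma)u^\gamma$ where $\Psi<\theta u$. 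On the first region the ratio $\Psi^{\gamma-1}/u^{\gamma-1}\le\theta^{\gamma-1}$ is bounded, so $\kappa c^{p-\gamma}\theta^{\gamma-1}<\gamma\lambda_+$ for $c$ small. On the second region I would bound $\int\Psi^{\gamma-1}(u-\Psi)\le\int\Psi^{\gamma-1}u$ using $\Psi<\theta u$, and absorb it into $\int u^\gamma$ via $\Psi^{\gamma-1}u\le(\theta u)^{\gamma-1}\cdots$, which is where I expect the bookkeeping to be delicate. This would give $\int_D u^\gamma\le 0$, contradicting $D\neq\emptyset$. If this splitting fails, the fallback is a blow-up/compactness contradiction: rescale as in \Cref{scaling-rmk} with $u_r(x)=u(x_0+rx)/r^{1+\tau}$, which stays a minimizer of $\J$. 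Assuming $\sup_{B_1}u_r\to0$ along a sequence while $u_r(0)>0$, I would then derive an energy contradiction from the positive potential term $\lambda_+(u_r)^\gamma$ dominating the gradient gain from cutting $u_r$ down to $0$ in $B_{1/2}$.
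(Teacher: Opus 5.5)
Your reductions (treating only $u_+$, and taking $x_0\in\{u>0\}$) are fine, but the main comparison does not close, and it fails exactly at the step you flag. First, a sign slip: convexity gives $\int_D\frac{|\nabla u|^p-|\nabla\Psi|^p}{p}\,dx\ge-\int_D\Delta_p\Psi\,(u-\Psi)\,dx$. This is nonpositive, so minimality only yields
\[
\lambda_+\int_D\big(u^\gamma-\Psi^\gamma\big)\,dx\le\kappa\,c^{p-\gamma}\int_D\Psi^{\gamma-1}(u-\Psi)\,dx,
\]
and you would need this to be impossible when $D\neq\emptyset$. With $t:=\Psi/u\in[0,1)$, the two integrands are $\lambda_+u^\gamma(1-t^\gamma)$ and $\kappa c^{p-\gamma}u^\gamma t^{\gamma-1}(1-t)$. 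For $\gamma<1$ the second dominates as $t\to0$, whatever $c$ is, and that region is never empty because $t(x_0)=0$. Equivalently, $\Delta_p\Psi=\kappa c^{p-1}|x-x_0|^{-(1-\gamma)p/(p-\gamma)}$ is unbounded at $x_0$, while $\Delta_pu=\gamma\lambda_+u^{\gamma-1}$ is bounded near $x_0$. So the inequality $\Delta_pu\ge\Delta_p\Psi$ on $D$, which a comparison with $\Psi$ needs, fails near the center for every $c>0$. Your absorption on $\{\Psi<\theta u\}$ runs the wrong way: since $\gamma-1<0$, $\Psi<\theta u$ gives $\Psi^{\gamma-1}u\ge\theta^{\gamma-1}u^\gamma$, which is a lower bound, not an upper one. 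The blow-up fallback is not yet an argument either. Compactness of the $u_r$ with $\sup_{B_1}u_r\to0$ merely produces the limit $0$. The cut-off energy comparison would also require a quantitative estimate of Alt--Caffarelli type, since $\lambda_+\int u_r^\gamma$ has no a priori lower bound in terms of $\sup u_r$.

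The missing idea is to produce a right-hand side independent of the unknown, so that the weak comparison principle for $\Delta_p$ applies, and to compare with a barrier of \emph{constant} $p$-Laplacian: $w=c\,|x-x_0|^{p/(p-1)}$, with $\Delta_pw=c^{p-1}N\big(\tfrac{p}{p-1}\big)^{p-1}$. The paper does this through the change of unknown $v:=(u_+)^\mu$ with $\mu=(p-\gamma)/(p-1)$. Using the Euler--Lagrange equation in $\{u>0\}$,
\[
\Delta_pv=\mu^{p-1}u_+^{-\gamma}\Big((1-\gamma)|\nabla u_+|^p+\gamma\lambda_+u_+^{\gamma}\Big)\ge\gamma\lambda_+\mu^{p-1},
\]
and this is precisely where $\gamma\le1$ enters. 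Take $c$ so small that $\Delta_pw\le\gamma\lambda_+\mu^{p-1}$, and compare on $B_r(x_0)\cap\{u>0\}$, where $v=0\le w$ on $\partial\{u>0\}$ and $v(x_0)>0=w(x_0)$. This gives $y\in\partial B_r(x_0)$ with $u_+(y)^\mu>c\,r^{p/(p-1)}$, i.e.\ $u_+(y)>c^{1/\mu}r^{1+\tau}$, because $\mu(1+\tau)=p/(p-1)$. An equally short alternative is to freeze the nonlinearity. With $M:=\sup_{B_r(x_0)}u_+$, one has $\Delta_pu=\gamma\lambda_+u^{\gamma-1}\ge\gamma\lambda_+M^{\gamma-1}$ in $B_r(x_0)\cap\{u>0\}$, again because $\gamma\le1$. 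Comparison with $a|x-x_0|^{p/(p-1)}$, where $a^{p-1}N\big(\tfrac{p}{p-1}\big)^{p-1}=\gamma\lambda_+M^{\gamma-1}$, gives $M\ge a\,r^{p/(p-1)}$. That is, $M^{(p-\gamma)/(p-1)}\ge C\,r^{p/(p-1)}$, which is $M\ge c_+r^{1+\tau}$.
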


\begin{proof}
The proof is identical to that of the one-phase problem in \cite[Proposition 4.1]{araujo2024geometric}. For completeness, we show the estimate for $u_-$. 
\\
By continuity, it suffices to consider $x_0 \in \{u<0\}$. Take $r>0$ such that $B_r(x_0) \Subset \Omega$. Set $w(x):= c_-|x-x_0|^{\frac{p}{p-1}}$, for some $c_->0$ to be chosen. Letting $\mu: = (p-\gamma)/(p-1)$, we compute formally in $B_r(x_0) \cap \{u<0 \}$,
\begin{align*}
    \Delta_p (u_-)^\mu &= \mu^{p-1} (u_-)^{-\gamma} \Big( (1-\gamma)|\nabla u_-|^p + u_- \Delta_p u_- \Big) \\
    &= \mu^{p-1} (u_-)^{-\gamma} \Big( (1-\gamma)|\nabla u_-|^p +\gamma \lambda_- (u_-)^\gamma \Big) \\
    &\geq \gamma \lambda_- \mu^{p-1},
\end{align*}
where we have employed \Cref{Dist-sol}, and 
\begin{equation*}
    \Delta_p w = c_-^{p-1} N \left( \dfrac{p}{p-1} \right)^{p-1}.
\end{equation*}
We may thus choose $c_-=c_-(N,p,\gamma,\lambda_-)>0$ small enough (see \cite{araujo2024geometric} for a detailed justification) such that 
\begin{equation*}
    \Delta_p (u_-)^\mu \geq \Delta_p w \quad \text{in} \quad  B_r(x_0) \cap \{u<0\}.
\end{equation*}
 As $(u_-(x_0))^\mu>0 =w(x_0)$, the comparison principle implies the existence of a point $y \in \partial ( B_r(x_0) \cap \{ u<0\})$ such that 
\begin{equation} \label{nondeg-1}
\big(u_-(y)\big)^\mu > w(y).
\end{equation}
Moreover, for $x \in B_r(x_0) \cap \partial \{u<0\}$, it holds that 
\[\big(u_-(x)\big)^\mu = 0 < w(x).\]
Therefore, $y \in \partial B_r(x_0) \cap \{u<0\}$ and  \eqref{nondeg-1} readily gives
\begin{equation*}
         c_-r^{1+\tau}= \big(w(y) \big)^\frac{1}{\mu} \leq \sup_{\partial B_r(x_0) \cap \{u<0\}} u_-,
\end{equation*}
from which the desired estimate follows. 
\end{proof}

\section{The free boundaries} \label{FB-sec}

In view of \Cref{restricted} and \Cref{nondegeneracy-strong}, we henceforth restrict the analysis to the range
\begin{equation} \label{gamma-range}
0 < \gamma < \min \left\{1, \dfrac{p \alpha_p}{1+\alpha_p} \right\}.
\end{equation}

\subsection{Porosity and Lebesgue measures} \label{poro-subsec}
Here, we first establish the local porosity of the set $\Gamma_0$ in $\Omega$ and subsequently the trivial local $N$-dimensional Lebesgue measure of $\Gamma$.

\begin{proposition} \label{porosity}
The set $\Gamma_0$ is locally porous in $\Omega$ with a porosity constant $\kappa \in (0,1)$ that depends on $p,\gamma,c_\pm,$ and $C_2$.
\end{proposition}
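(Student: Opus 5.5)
Porosity will come from playing the upper growth bound of \Cref{Optimal-growth-thm} against the nondegeneracy of \Cref{nondegeneracy-strong}; in the range \eqref{gamma-range} both have the same exponent $1+\tau$, since $\tau^\ast=\tau$ by \Cref{restricted}. Fix $\Omega'\Subset\Omega$, a point $x_0\in\Gamma_0\cap\Omega'$ and a radius $r<\rho_0/2$ with $B_{2r}(x_0)\subset\Omega$. I want to produce $y\in B_r(x_0)$ with $B_{\kappa r}(y)\subset B_r(x_0)\setminus\Gamma_0$. That is local porosity with constant $\kappa$.

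\textbf{Step 1: locate a point of large $|u|$.} Since $x_0\in\Gamma$, it lies in $\overline{\{u_+>0\}}$ or $\overline{\{u_->0\}}$; say the former. Applying \Cref{nondegeneracy-strong} on the ball $B_{r/2}(x_0)$ gives $y\in\overline{B}_{r/2}(x_0)$ with
\[
u(y)\ge c_+(r/2)^{1+\tau}.
\]

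\textbf{Step 2: bound $|u|$ near $\Gamma_0$ from above.} Let $d:=\textnormal{dist}(y,\Gamma_0)$; note $d\le |y-x_0|\le r/2$. Choose $z\in\Gamma_0$ realising this distance. Then $z\in B_{r}(x_0)$, so $z$ lies in a slightly larger compact subset of $\Omega$ where \Cref{Optimal-growth-thm} applies (I will enlarge $\Omega'$ at the outset to allow this). Using $y\in\overline{B}_{d}(z)$ and continuity, \Cref{Optimal-growth-thm} yields
\[
u(y)\le C_2 d^{1+\tau}.
\]

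\textbf{Step 3: compare.} Combining Steps 1 and 2 gives $d\ge \kappa_0 r$ with $\kappa_0:=\tfrac12 (c_+/C_2)^{1/(1+\tau)}$. Set $\kappa:=\min\{\kappa_0,1/2\}/2$. By the definition of $d$, the ball $B_{\kappa r}(y)$ does not meet $\Gamma_0$. Because $|y-x_0|\le r/2$ and $\kappa\le 1/4$, it is also contained in $B_r(x_0)$, which finishes the argument. The resulting $\kappa$ depends only on $p,\gamma,c_\pm,C_2$, as claimed. The negative phase is handled in the same way, using $c_-$ and $u_-$.

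\textbf{Main obstacle.} The delicate point is the bookkeeping of domains and radii. \Cref{Optimal-growth-thm} requires $z\in\Omega''$ and $d<\rho_0$, while \Cref{nondegeneracy-strong} requires $r<\textnormal{dist}(x_0,\partial\Omega)$. I would fix $\Omega'\Subset\Omega''\Subset\Omega$, apply \Cref{Optimal-growth-thm} on $\Omega''$, and restrict to $r<\min\{\rho_0,\textnormal{dist}(\Omega',\partial\Omega'')\}$. Both requirements are then met, because $z\in B_r(x_0)\subset\Omega''$ and $d\le r/2<\rho_0$.
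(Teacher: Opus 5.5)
Your proposal is correct and follows the paper's argument. Nondegeneracy produces a point $y$ with $|u(y)|\gtrsim r^{1+\tau}$, the growth bound applied at a nearest point of $\Gamma_0$ gives $|u(y)|\le C_2\,\textnormal{dist}(y,\Gamma_0)^{1+\tau}$, and comparing the two yields a hole of radius $\kappa r$; your choice of $y\in\overline{B}_{r/2}(x_0)$ with $\kappa\le 1/4$, together with the $\Omega'\Subset\Omega''$ bookkeeping, also makes rigorous the containment $B_{\kappa r}(y)\subset B_r(x_0)$, which the paper asserts loosely via $B_{d(y)}(y)\subset B_r(z)$.
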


\begin{proof} 
Fix any $z \in \Gamma_0$  and $\Omega' \Subset 
\Omega$ such that $B_r(z) \subset \Omega'$ and $r < \rho_0$  ($\rho_0$ is fixed in \Cref{Optimal-growth-thm}). By \Cref{Optimal-growth-thm} and \Cref{nondegeneracy-strong}, there exists $y \in B_{r}(z)$ such that 
    \begin{equation*} \label{nondeg-bnd}
        \dfrac{c_0}{2}r^{1+\tau} \leq |u(y)| \leq C_2 d(y)^{1+\tau},
    \end{equation*}
    with $d(y):= \textnormal{dist}(y,\Gamma_0)$ and $c_0 := \min\{c_+,c_-\}$. Thus, one has
    \begin{equation*}
        \kappa r < d(y) \ \ \ \textnormal{if} \ \ \ \kappa:= \min\Big\{ 1/2,  \left( c_0/2 C_2 \right)^{\frac{1}{1+\tau}} \Big\}, 
    \end{equation*}
    so that $B_{\kappa r}(y) \subset B_{d(y)}(y) \subset B_r(z) \backslash \Gamma_0$, which proves the proposition. 
\end{proof}

\begin{corollary}\label{zero-lebesgue-0}
For any $\Omega' \Subset \Omega$, one has
$\mathcal{L}^N(\Gamma_0\cap \Omega')=0.$
\end{corollary}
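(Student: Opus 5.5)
The plan is to derive the corollary from the local porosity in \Cref{porosity} by a Lebesgue density argument. A porous set cannot contain any of its points of density one, while almost every point of a measurable set is such a point.

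First, $\Gamma_0\cap\Omega'$ is Lebesgue measurable. Indeed, $\Gamma$ is relatively closed in $\Omega$, and $\{\nabla u=0\}$ is relatively closed because $\nabla u$ is continuous by \Cref{interior-reg,interior-reg-sing}. Hence $\Gamma_0$ is relatively closed in $\Omega$, so it is Borel.

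Next, fix $z\in\Gamma_0\cap\Omega'$ and a radius $r<\min\{\rho_0,\textnormal{dist}(\Omega',\partial\Omega)\}$, replacing $\Omega'$ by a slightly larger compactly contained set if needed so that \Cref{porosity} applies. That proposition gives a ball $B_{\kappa r}(y)\subset B_r(z)\setminus\Gamma_0$, which yields
\[
\frac{\mathcal{L}^N(\Gamma_0\cap B_r(z))}{\mathcal{L}^N(B_r(z))}\le 1-\frac{\mathcal{L}^N(B_{\kappa r}(y))}{\mathcal{L}^N(B_r(z))}=1-\kappa^N .
\]
Here $\kappa$ is independent of $z$ and $r$. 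Consequently,
\[
\limsup_{r\to0}\frac{\mathcal{L}^N(\Gamma_0\cap B_r(z))}{\mathcal{L}^N(B_r(z))}\le 1-\kappa^N<1
\]
for every $z\in\Gamma_0\cap\Omega'$.

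On the other hand, the Lebesgue density theorem says that almost every point of the measurable set $\Gamma_0\cap\Omega'$ has density $1$ with respect to $\Gamma_0\cap\Omega'$, and hence also with respect to the larger set $\Gamma_0$. Since no point of $\Gamma_0\cap\Omega'$ has density $1$, we conclude $\mathcal{L}^N(\Gamma_0\cap\Omega')=0$.

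The only step needing care is the uniformity of the porosity constant and of the admissible radii over $\Omega'$. This is handled by the fact that $\rho_0$ and $\kappa$ in \Cref{Optimal-growth-thm} and \Cref{porosity} depend only on the data and on $\Omega'$, not on the point $z$.
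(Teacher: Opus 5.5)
Your proof is correct, and it takes a different route from the paper. The paper reads \Cref{porosity} as saying that $\Gamma_0\cap\Omega'$ is $\kappa$-porous. It then quotes the known dimension bound for porous sets, $\dim_{\mathcal H}(\Gamma_0\cap\Omega')\le N-C\kappa^N<N$, from which $\mathcal{L}^N(\Gamma_0\cap\Omega')=0$ follows.

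You bypass Hausdorff dimension entirely. Each hole $B_{\kappa r}(y)\subset B_r(z)\setminus\Gamma_0$ caps the upper density of $\Gamma_0$ at $z$ by $1-\kappa^N$, and this is incompatible with the Lebesgue density theorem on a set of positive measure. Your argument is elementary and self-contained.

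It also needs less than your last paragraph suggests. It is enough that each $z$ admit proportionally sized holes along some sequence of radii $r\to0$, and the constant could even depend on $z$. The dimension bound the paper invokes, by contrast, genuinely uses a porosity constant that is uniform over the set and over small scales.

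In exchange, the paper's route yields more than measure zero: a quantitative drop in Hausdorff dimension, which your density argument does not provide.
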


\begin{proof}
Indeed, by \Cref{porosity}, the set 
$\Gamma_0\cap \Omega'$ is $\kappa$-porous, and hence
\[
\dim_{\mathcal H}(\Gamma_0\cap \Omega')
\le N-C\kappa^N < N,
\]
for some $C=C(N)>0$ (cf. \cite[Sec. 2]{koskela1997hausdorff}). Therefore,
$\Gamma_0\cap \Omega'$ has zero $N$-dimensional Lebesgue measure.
\end{proof}

\begin{corollary} \label{zero-lebesgue-non} For any $\Omega' \Subset \Omega$, $\mathcal{L}^N(\Gamma_* \cap \Omega')=0$.
\end{corollary}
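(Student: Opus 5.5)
The argument will rest on the definition \eqref{FB2N} of $\Gamma_*$: every point of $\Gamma_*$ has $\nabla u\neq 0$ by definition. This, together with the interior $C^{1,\alpha}$ regularity from \Cref{interior-reg} and \Cref{interior-reg-sing} (taken with $\delta=1$), lets us apply the implicit function theorem locally and then pass to a countable cover.

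\emph{Step 1: $u(x_0)=0$.} Fix $x_0\in\Gamma_*\cap\Omega'$. Since $x_0\in\partial\{u>0\}\cap\partial\{u<0\}$ and $u$ is continuous, there are points arbitrarily close to $x_0$ where $u>0$ and points where $u<0$. Hence $u(x_0)=0$. By the definition \eqref{FB2N} of $\Gamma_*$, we also have $\nabla u(x_0)\neq 0$, so no further justification of the nondegenerate gradient is needed.

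\emph{Step 2: local graph structure.} Since $u\in C^{1,\alpha}_{\rm loc}(\Omega)$, the implicit function theorem gives a radius $r_{x_0}>0$ and a $C^{1,\alpha}$ function $g$ of $N-1$ variables with the following property. After a rotation so that $\partial_{x_N}u(x_0)\neq0$, we have
\[
\{u=0\}\cap B_{r_{x_0}}(x_0)=\{x_N=g(x')\}\cap B_{r_{x_0}}(x_0).
\]
Shrinking the radius if necessary, we may also assume $\nabla u\neq 0$ throughout $B_{r_{x_0}}(x_0)$. Then $\Gamma_*\cap B_{r_{x_0}}(x_0)\subset\{u=0\}\cap B_{r_{x_0}}(x_0)$ is contained in the graph of a continuous function. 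By Fubini's theorem, such a graph has zero $\mathcal{L}^N$-measure, since each vertical line meets it in at most one point.

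\emph{Step 3: countable cover.} The balls $\{B_{r_x}(x)\}_{x\in\Gamma_*\cap\Omega'}$ cover $\Gamma_*\cap\Omega'$. By second countability of $\R^N$ (Lindel\"of's theorem), countably many of these balls already suffice. Therefore $\Gamma_*\cap\Omega'$ is a countable union of $\mathcal{L}^N$-null sets, and hence is itself null.

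An alternative route avoids the implicit function theorem. For $u\in W^{1,p}$, Stampacchia's theorem gives $\nabla u=0$ a.e. on $\{u=0\}$. Since $\Gamma_*\subset\{u=0\}\cap\{\nabla u\neq0\}$ and $\nabla u$ is continuous, this yields the conclusion directly.

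There is no genuine obstacle here. The only point requiring care is Step 1: one must use continuity of $u$ to place $\Gamma_*$ inside the zero level set, and continuity of $\nabla u$ to keep the gradient nonvanishing on a full neighborhood.
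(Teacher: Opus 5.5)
Your proposal is correct and follows essentially the same route as the paper. The implicit function theorem, using the interior $C^{1,\alpha}$ regularity and $\nabla u\neq 0$ on $\Gamma_*$, places $\Gamma_*$ locally inside a graph of null $\mathcal{L}^N$-measure, a countable subcover from second countability finishes the argument, and your explicit check that $u(x_0)=0$ fills in a detail the paper leaves implicit. Your alternative via Stampacchia's theorem ($\nabla u=0$ a.e.\ on $\{u=0\}$, applied with the classical gradient, which agrees with the weak one for $u\in C^1$) is also valid and shorter, since it avoids the implicit function theorem altogether.
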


\begin{proof}
For every $x\in\Gamma_*$, the interior $C^{1,\alpha}$ regularity of $u$ and the
implicit function theorem provide an open neighborhood $U_x$ in which
$\Gamma_*$ is contained in a $C^{1,\alpha}$ graph. Since $\Omega$ is second-countable, the cover $\{U_x\}_{x\in\Gamma_*}$ admits a countable subcover
$\{U_j\}_{j\in\mathbb N}$. Each $\Gamma_*\cap U_j$ has zero
$N$-dimensional Lebesgue measure. Hence,
\[
\mathcal{L}^N(\Gamma_*\cap\Omega')
\leq
\sum_{j=1}^{\infty}
\mathcal{L}^N(\Gamma_*\cap U_j)=0.
\]
\end{proof}

\subsection{Density estimates} \label{density-subsec}

With $\gamma$ in the range specified by \eqref{gamma-range}, we prove Lebesgue density estimates for neighborhoods of points in $\Gamma_0$. 
The following lemma provides lower-density estimates.

\begin{lemma} \label{lower-dens}
    If $z_\pm \in \Gamma_0 \cap  \partial\{u_\pm>0\}$, then there exists $c>0$ depending on $C_1$ and $c_\pm$ such that 
    
    \begin{equation*}
        \mathcal{L}^N (B_{c r}) \leq \mathcal{L}^N\big(\{0 < u_\pm \leq C_2 r^{1+\tau}\} \cap B_r(z_\pm)\big).  
    \end{equation*}
for any $r<\rho_0$, $\rho_0$ is the fixed by \Cref{Optimal-growth-thm}.
\end{lemma}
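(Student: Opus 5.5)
Fix $z_+\in\Gamma_0\cap\partial\{u_+>0\}$ and $r<\rho_0$; the negative phase is symmetric. We will produce a ball of radius comparable to $r$, inside $B_r(z_+)$, on which $u$ is positive but still bounded by $C_2r^{1+\tau}$.

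\textbf{Step 1: a point of large positive value.} Apply \Cref{nondegeneracy-strong} at $z_+\in\overline{\{u_+>0\}}$ on the ball $B_{r/2}(z_+)$. This gives $y\in \overline{B_{r/2}(z_+)}$ with
\[
u(y)\ge c_+(r/2)^{1+\tau}.
\]

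\textbf{Step 2: the upper bound.} By \Cref{Optimal-growth-thm}, which applies since $\tau^\ast=\tau$ in the range \eqref{gamma-range}, we have
\[
\sup_{B_r(z_+)}|u|\le C_2r^{1+\tau}.
\]
Hence every point of $B_r(z_+)\cap\{u>0\}$ lies in $\{0<u_+\le C_2r^{1+\tau}\}$. It therefore suffices to find a ball $B_{cr}(x)\subset B_r(z_+)\cap\{u>0\}$.

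\textbf{Step 3: positivity persists near $y$.} This is the main step. The natural tool is the gradient bound from \Cref{interior-reg,interior-reg-sing}, but a plain Lipschitz bound $|\nabla u|\le C_1$ only gives positivity on a ball of radius about $c_+r^{1+\tau}/C_1$, which is not linear in $r$. So I use the fact that $\nabla u(z_+)=0$ together with the Hölder continuity of the gradient, at the scaling-correct rate.

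Since \Cref{Optimal-growth-thm} is uniform over points of $\Gamma_0$, rescale as in \Cref{scaling-rmk}:
\[
v(x):=\frac{u(z_++rx)}{r^{1+\tau}}.
\]
Then $v$ is a local minimizer of $\J$ in $B_{2}$ (shrinking $\rho_0$ if necessary) with $\|v\|_{L^\infty(B_2)}\le 2^{1+\tau}C_2$. By \Cref{interior-reg,interior-reg-sing}, $|\nabla v|\le C_1'$ in $B_1$, with $C_1'$ depending only on the data, $C_2$ and $C_1$. Moreover $v(y')\ge c_+2^{-1-\tau}$, where $y'=(y-z_+)/r\in \overline{B_{1/2}}$.

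The Lipschitz bound then gives $v>0$ on $B_{c}(y')$ with
\[
c:=\min\bigl\{1/4,\; c_+2^{-2-\tau}/C_1'\bigr\},
\]
and $B_c(y')\subset B_1$ because $|y'|\le 1/2$ and $c\le 1/4$.

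\textbf{Step 4: conclusion.} Scaling back, $B_{cr}(y)\subset B_r(z_+)\cap\{u>0\}$. Combined with Step 2,
\[
B_{cr}(y)\subset \{0<u_+\le C_2r^{1+\tau}\}\cap B_r(z_+).
\]
Since Lebesgue measure is translation invariant, $\mathcal L^N(B_{cr})=\mathcal L^N(B_{cr}(y))$, which yields the claim. The constant $c$ depends only on $c_\pm$ and the $C^{1,\alpha}$ bound $C_1$ (via $C_2$), as stated.
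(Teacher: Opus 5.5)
Your proof is correct and follows essentially the same route as the paper. Both arguments use nondegeneracy to find $y\in B_{r/2}(z_+)$ with $u(y)\gtrsim r^{1+\tau}$, rescale by $r^{1+\tau}$ so the blow-up is still a minimizer of $\J$ with a uniform Lipschitz bound, obtain a ball of radius $cr$ around $y$ inside $\{u>0\}\cap B_r(z_+)$, and take the upper bound from \Cref{Optimal-growth-thm}. The only difference is that you center the blow-up at $z_+$ rather than at $y$, so the $L^\infty$ bound on the rescaled function comes directly from the growth estimate at a point of $\Gamma_0$; the resulting restriction $2r<\rho_0$ is harmless, since you can apply the result at $r/2$ and replace $c$ by $c/2$.
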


\begin{proof}
It suffices to show the estimate for $z_+ \in \Gamma_0 \cap \partial\{u_+>0\}$. Thanks to the optimal growth and nondegeneracy estimates for $u$, we have for all $r<\rho_0$
\begin{equation} \label{up-lo-bnd}
         c_+r^{1+\tau} \leq \sup_{B_r(z_+)} u_+ \leq C_2 r^{1+\tau}.
\end{equation}
Hence, there exists $y \in B_{r/2}(z_+) \cap \{ u_+>0 \}$ such that 
\begin{equation} \label{uyr}
          u_+(y) \geq \dfrac{c_+}{2}r^{1+\tau}.
\end{equation}
For each $r\in(0,\rho_0)$, consider the rescaling
\begin{equation} \label{ur}
         u_r(x):= \dfrac{u(rx+y)}{r^{1+\tau}}, \quad x \in B_{1},
\end{equation}
which minimizes $\J[\cdot]$ locally in $B_1$ (cf. Section \ref{scaling-rmk}) and satisfies $\sup_{B_{1}} |u_r| \leq C_2$ by virtue of \eqref{up-lo-bnd}. In particular,  
\begin{equation*}
    u_r(x) \leq C_2,
\end{equation*}
all $x$ satisfying $|x|< 1$. By  \Cref{interior-reg} and \Cref{interior-reg-sing}, $u_r$ is $L$-Lipschitz continuous in $B_{1/2}$ for some $0< L \leq C_1$, i.e., $|u_r(x) - u_r(0)| \leq L |x|$, for $x \in B_{1/2}$. Now fix 
\begin{equation*}
    c := \min\left\{1/4, c_+/4L \right\}.
\end{equation*}
Obviously, $B_{c r}(y) \subset B_r(z_+)$ since
\begin{equation*}
    |x-z_+| \leq |x-y| + |y -z_+| \leq c r + \dfrac{r}{2} < r.
\end{equation*}
In view of \eqref{uyr} and \eqref{ur}, the blowup at $y$ satisfies
\begin{equation*}
         u_r(0) \geq \dfrac{c_+}{2},
\end{equation*} 
and, thus, for all $x$ with $|x| \leq c$,
\begin{equation*}
         u_r(x) \geq u_r(0) - L|x|\geq \dfrac{c_+}{2} - c L \geq \dfrac{c_+}{4}.
\end{equation*}
It follows that 
\[0<\dfrac{c_+}{4} \leq u_r \leq C_2  \quad \text{in } B_c,\] 
and, upon unscaling, we deduce $B_{c r}(y) \subset  \{0 < u_+ \leq C_2r^{1+\tau}\}$ from which the desired estimate follows.
\end{proof} 

We also have the following upper density estimate.

\begin{lemma} \label{upper-dens}
    If $z_\pm \in \Gamma_0 \cap \partial \{u_\pm >0\}$ and $\Omega' \Subset \Omega$ such that $B_r(z_\pm) \subset \Omega'$,  then
    \begin{equation*}
          \mathcal{L}^N \big( \{ 0<u_\pm \leq C_2 r^{1+\tau} \} \cap \Omega' \big) \leq Cr^{(1+\tau)(1-\gamma)},
    \end{equation*}
    where $C>0$ is independent of $r$.
\end{lemma}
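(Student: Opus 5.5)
The plan is to use the local integrability argument of \Cref{Loc-integ}, quantified on the level set $\{0<u_\pm\le C_2 r^{1+\tau}\}$. We treat $u_+$; the case of $u_-$ is symmetric. The key observation is that on this set the Euler--Lagrange right-hand side is large: since $\gamma\le 1$, we have $u_+^{\gamma-1}\ge (C_2 r^{1+\tau})^{\gamma-1}$. Hence, writing $E_r:=\{0<u_+\le C_2r^{1+\tau}\}\cap\Omega'$,
\[
\mathcal{L}^N(E_r)\,(C_2r^{1+\tau})^{\gamma-1}\le \int_{E_r} u_+^{\gamma-1}\,dx \le \int_{\{u_+>0\}\cap\Omega'} u_+^{\gamma-1}\,dx .
\]
So it suffices to show that the last integral is bounded by a constant independent of $r$. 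This then gives $\mathcal{L}^N(E_r)\le C r^{(1+\tau)(1-\gamma)}$, which is exactly the claimed exponent.

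For the uniform bound we repeat the proof of \Cref{Loc-integ}. Fix $\Omega''$ with $\Omega'\Subset\Omega''\Subset\Omega$ and a cutoff $\xi\in C_0^\infty(\Omega'')$ with $\xi=1$ on $\Omega'$ and $|\nabla\xi|\le C\,\mathrm{dist}(\Omega',\partial\Omega'')^{-1}$. We test the identity of \Cref{Dist-sol} with $\varphi=\psi_\delta(u_+)\xi$. Discarding the nonpositive term $-\int|\nabla u_+|^p\psi_\delta'(u_+)\xi$ leaves
\[
\gamma\lambda_+\int u_+^{\gamma-1}\psi_\delta(u_+)\xi\,dx\le \int_{\supp\nabla\xi}|\nabla u_+|^{p-1}|\nabla\xi|\,dx\le C,
\]
where $C$ depends only on the Lipschitz bound $C_1$ of $u$ on $\Omega''$ (from \Cref{interior-reg,interior-reg-sing}), on $\mathcal{L}^N(\Omega'')$, and on the distance between $\Omega'$ and $\partial\Omega''$. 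Letting $\delta\to0$ by monotone convergence yields $\int_{\{u_+>0\}\cap\Omega'}u_+^{\gamma-1}\le C/(\gamma\lambda_+)$, with no dependence on $r$.

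The one delicate point is the case $\gamma=1$. There the inequality $u_+^{\gamma-1}\ge (C_2r^{1+\tau})^{\gamma-1}$ holds trivially, and the target bound $Cr^{0}$ is also trivial, since $\mathcal{L}^N(\Omega')$ is finite; so nothing is lost. The genuine content is $\gamma<1$, which is guaranteed by \eqref{gamma-range}. We also need that $u_+$ is continuous, so that $\{u_+>0\}$ is open and \Cref{Dist-sol} applies there; this follows from the interior regularity. Apart from that, the argument is routine. The only step requiring care is making sure the constant from the test-function argument depends on $\Omega'$ and the data but not on $r$ or on the point $z_\pm$. This holds because the cutoff is chosen once for $\Omega'$. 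Note that the hypothesis $z_\pm\in\Gamma_0\cap\partial\{u_\pm>0\}$ with $B_r(z_\pm)\subset\Omega'$ serves only to place the estimate in the setting where it will be combined with \Cref{lower-dens}.
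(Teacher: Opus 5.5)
Your proof is correct and follows the paper's own route. You bound $u_\pm^{\gamma-1}$ from below by $(C_2 r^{1+\tau})^{\gamma-1}$ on the sublevel set and invoke the $r$-independent $L^1(\Omega')$ bound on $u_\pm^{\gamma-1}$ from \Cref{Loc-integ}; the only difference is that you re-derive that bound explicitly, controlling $\int|\nabla u_\pm|^{p-1}|\nabla\xi|$ via the Lipschitz bound rather than the $W^{1,p}$ bound, which makes the independence of the constant from $r$ and $z_\pm$ transparent.
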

\begin{proof}
    By \Cref{Loc-integ}, we deduce, for any $\gamma$ satisfying \eqref{gamma-range} and $\ell>0$,
    \begin{equation*}
        \int_{\{0<u_\pm \leq \ell\} \cap \Omega'} \ell^{\gamma -1} dx \leq \dfrac{1}{\gamma \lambda_\pm}\int_{\{0<u_\pm \leq \ell\} \cap \Omega'} \gamma \lambda_\pm u_\pm^{\gamma -1} dx \leq C,
    \end{equation*}
    with $C>0$ independent of $\ell$. In particular, setting $\ell:= C_2 r^{1+\tau}$, we obtain
    \begin{equation*}
        C^{\gamma-1}_2 r^{(1+\tau)(\gamma-1)}\mathcal{L}^N\big(\{0<u_\pm \leq C_2 r^{1+\tau}\} \cap \Omega' \big) \leq C.
    \end{equation*}
\end{proof}

\subsection{Hausdorff measure estimates} \label{hausdorff-sec} We conclude in this subsection, again under the restriction \eqref{gamma-range}, with Hausdorff measure estimates for the free boundaries. We first deal with the nonbranching part in the following proposition.

\begin{proposition}\label{hausdorff-nonbranch}
Let $u$ be a local minimizer of $\J[\cdot]$ and $\Omega'\Subset\Omega$.
For every $\eta>0$,
\[
\mathcal{H}^{N-1}\left(
\Gamma_*\cap\overline{\Omega'}\cap\{|\nabla u|\geq\eta\}
\right)<\infty.
\]
Consequently, $\Gamma_*\cap\Omega'$ is countably $C^{1,\alpha}$ rectifiable
and $\mathcal{H}^{N-1}$-$\sigma$-finite. In particular,
\[
\mathcal{H}^{s}(\Gamma_*\cap\Omega')=0
\qquad\text{for every } \ \ s>N-1,
\]
and
\[
\textnormal{dim}_\mathcal{H}(\Gamma_*) \leq N-1.
\]
\end{proposition}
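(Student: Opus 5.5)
The plan is to use the $C^{1,\alpha}$ regularity of $u$ (\Cref{interior-reg,interior-reg-sing}) together with the implicit function theorem. The sets with a uniform lower bound on the gradient are handled by a compactness argument, and the full set $\Gamma_*\cap\Omega'$ by a countable exhaustion in $\eta$.

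Fix $\eta>0$ and set $K_\eta:=\Gamma_*\cap\overline{\Omega'}\cap\{|\nabla u|\geq\eta\}$. By continuity of $u$ and $\nabla u$, $K_\eta\subset\overline{\Omega'}\cap\{u=0\}\cap\{|\nabla u|\ge\eta\}=:\widetilde K_\eta$, and $\widetilde K_\eta$ is compact. Indeed, points of $\Gamma_*$ lie in $\partial\{u>0\}\cap\partial\{u<0\}$, hence satisfy $u=0$.

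For each $x\in\widetilde K_\eta$, the implicit function theorem applies: $\nabla u$ is $\alpha$-Hölder with constant $C_1$, so $|\nabla u|\geq\eta/2$ on a ball $B_{\rho}(x)$ with $\rho=\rho(\eta,C_1,\alpha)$ independent of $x$, e.g. $\rho=(\eta/(2C_1))^{1/\alpha}$. Rotating so that $\nabla u(x)$ is parallel to $e_N$, we get that on a cylinder of size comparable to $\rho$, $\{u=0\}$ is the graph of a $C^{1,\alpha}$ function $g$ over an $(N-1)$-dimensional disc of radius $c\rho$. The Lipschitz constant of $g$ is bounded by $C_1/(\eta/2)$, since $\partial_i g=-\partial_i u/\partial_N u$.

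The $\mathcal{H}^{N-1}$ measure of such a graph piece is then at most $C(N)(c\rho)^{N-1}(1+2C_1/\eta)^{N-1}$. Covering the compact set $\widetilde K_\eta$ by finitely many such cylinders (their number is bounded by $C(N)\,\mathrm{diam}(\Omega')^N\rho^{-N}$) gives $\mathcal{H}^{N-1}(K_\eta)\le\mathcal{H}^{N-1}(\widetilde K_\eta)<\infty$. One technical point must be checked: the $C^{1,\alpha}$ bound holds on a slightly larger compact set $\Omega''$ with $\Omega'\Subset\Omega''\Subset\Omega$, so that $\rho<\mathrm{dist}(\overline{\Omega'},\partial\Omega'')$ can be arranged. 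This uniformity step, making the radius and the graph constants independent of the base point, is the only place needing care, and I expect it to be the main (mild) obstacle.

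For the consequences, write $\Gamma_*\cap\Omega'=\bigcup_{j\ge1}K_{1/j}$, which holds since $\nabla u\neq0$ on $\Gamma_*$. Each $K_{1/j}$ is covered by finitely many $C^{1,\alpha}$ graphs and has finite $\mathcal{H}^{N-1}$ measure. This yields countable $C^{1,\alpha}$ rectifiability and $\sigma$-finiteness. For $s>N-1$, a set of finite $\mathcal{H}^{N-1}$ measure has $\mathcal{H}^s$ measure zero, since $\mathcal{H}^s(A)\le\lim_{\delta\to0}\delta^{s-N+1}\mathcal{H}^{N-1}_\delta(A)=0$. Countable subadditivity then gives $\mathcal{H}^s(\Gamma_*\cap\Omega')=0$. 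Finally, exhausting $\Omega$ by a countable family of sets $\Omega'\Subset\Omega$ yields $\mathcal{H}^s(\Gamma_*)=0$ for all $s>N-1$, i.e. $\dim_{\mathcal H}(\Gamma_*)\le N-1$.
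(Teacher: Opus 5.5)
Your proposal is correct and follows essentially the paper's argument: compactness of the level-set piece where $|\nabla u|\ge\eta$, a finite cover by implicit-function-theorem $C^{1,\alpha}$ graph neighborhoods, the area formula, and the exhaustion $\Gamma_*\cap\Omega'\subseteq\bigcup_m E_{1/m}$. Replacing $\partial\{u>0\}\cap\partial\{u<0\}$ by $\{u=0\}$ and insisting on a uniform radius are harmless; the uniform radius is in fact unnecessary, since compactness already gives a finite subcover. Also note that $\bigcup_j K_{1/j}=\Gamma_*\cap\overline{\Omega'}$, so what you need is the inclusion of $\Gamma_*\cap\Omega'$ in this union, not an equality.
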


\begin{proof}
For $\eta>0$, set
\[
E_\eta:=
\Gamma_*\cap\overline{\Omega'}\cap\{|\nabla u|\geq\eta\}.
\]
Equivalently,
\[
E_\eta=
\bigl(\partial\{u>0\}\cap\partial\{u<0\}\bigr)
\cap\overline{\Omega'}\cap\{|\nabla u|\geq\eta\}.
\]
The latter representation shows that $E_\eta$ is compact. At every point of
$E_\eta$, the implicit function theorem represents the zero level set of $u$,
and hence of $\Gamma_*$, locally in $\Omega$ as a $C^{1,\alpha}$ graph. Compactness, therefore,
provides a finite covering of $E_\eta$ by such graph neighborhoods. The area
formula yields
\[
\mathcal{H}^{N-1}(E_\eta)<\infty.
\]
Finally,
\[
\Gamma_*\cap\Omega'
\subseteq
\bigcup_{m=1}^{\infty}E_{1/m}.
\]
Thus $\Gamma_*\cap\Omega'$ is countably $C^{1,\alpha}$ rectifiable and
$\mathcal{H}^{N-1}$-$\sigma$-finite. The final assertion follows because a set
of finite $\mathcal{H}^{N-1}$ measure has zero $\mathcal{H}^{s}$ measure for
every $s>N-1$.
\end{proof}
 
The following theorem establishes the local finiteness of the Hausdorff measure of $\Gamma_0$ at a particular dimension dictated by the estimates developed earlier.     

\begin{theorem}
If $u$ is a local minimizer of $\J[\cdot]$ and $\Omega' \Subset \Omega$, then
\[
\mathcal{H}^{N-1+\tau(p-1)}\big(\Gamma_0 \cap \Omega' \big)<\infty. 
\]
\end{theorem}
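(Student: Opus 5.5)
We will argue by a Vitali covering combined with the two density estimates of \Cref{density-subsec}. First we check the exponent: with $\tau=\gamma/(p-\gamma)$ one has $1+\tau=p/(p-\gamma)$, so
\[
N-(1+\tau)(1-\gamma)=N-\frac{p(1-\gamma)}{p-\gamma}=N-1+\frac{\gamma(p-1)}{p-\gamma}=N-1+\tau(p-1).
\]
This is exactly the exponent obtained by dividing the upper estimate $Cr^{(1+\tau)(1-\gamma)}$ of \Cref{upper-dens} by the lower bound $\mathcal{L}^N(B_{cr})=C(N)c^Nr^N$ of \Cref{lower-dens}, which gives the count of disjoint balls.

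Fix $\Omega'\Subset\Omega''\Subset\Omega$ and $\delta<\min\{\rho_0,\textnormal{dist}(\Omega',\partial\Omega'')\}/10$. Every $z\in\Gamma_0$ lies in $\partial\{u>0\}$ or $\partial\{u<0\}$, so we split $\Gamma_0\cap\Omega'=\Gamma_0^+\cup\Gamma_0^-$ accordingly and treat $\Gamma_0^+$ only. Cover $\Gamma_0^+$ by the balls $B_r(z)$, $z\in\Gamma_0^+$, with $r\le\delta$ fixed. By Vitali's lemma we extract a disjoint subfamily $\{B_r(z_j)\}_{j=1}^{M}$ such that $\{B_{5r}(z_j)\}$ still covers $\Gamma_0^+$. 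The family is finite because the balls are disjoint inside the bounded set $\Omega''$. For each $j$, the proof of \Cref{lower-dens} produces a ball $B_{cr}(y_j)\subset B_r(z_j)\cap\{0<u_+\le C_2r^{1+\tau}\}$. These small balls are pairwise disjoint and contained in $\Omega''$, so applying \Cref{upper-dens} with $\Omega''$ in place of $\Omega'$ (legitimate since $B_r(z_j)\subset\Omega''$) we get
\[
M\,C(N)c^Nr^N\le\mathcal{L}^N\big(\{0<u_+\le C_2r^{1+\tau}\}\cap\Omega''\big)\le C r^{(1+\tau)(1-\gamma)}.
\]
Hence $M\le C r^{-(N-1+\tau(p-1))}$, with $C$ independent of $r$.

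It follows that $\mathcal{H}^{s}_{10\delta}(\Gamma_0^+)\le\sum_j C(5r)^{s}\le C$ with $s=N-1+\tau(p-1)$. Letting $r\to0$ and then $\delta\to0$ gives $\mathcal{H}^s(\Gamma_0^+)<\infty$, and the same bound holds for $\Gamma_0^-$.

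The main obstacle is making sure the constant in \Cref{upper-dens} is genuinely uniform. It comes from \Cref{Loc-integ}, which bounds $\int_{\Omega''}u_\pm^{\gamma-1}\chi_{\{u_\pm>0\}}$ by the cutoff-based estimate and therefore depends only on $\Omega''$ and the $W^{1,p}$ bound. A second point to check is that \Cref{lower-dens} is applied with the same threshold $C_2r^{1+\tau}$ and the same radius $r$ for all $j$, which is why we work with a single fixed radius rather than a variable-radius covering. The ranges $\gamma<1$ (so that $\ell^{\gamma-1}\le u^{\gamma-1}$ on $\{u\le\ell\}$, and so that nondegeneracy holds) and $\tau<\alpha_p$ (so that the growth exponent is exactly $1+\tau$) are both guaranteed by \eqref{gamma-range}.
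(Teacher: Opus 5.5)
Your proof is correct and follows the paper's argument. It uses a fixed-radius covering of $\Gamma_0^+\cap\Omega'$ and the lower density lemma to place pairwise disjoint balls $B_{cr}(y_j)$ inside $\{0<u_+\le C_2r^{1+\tau}\}$. The upper density bound (from local integrability of $u_+^{\gamma-1}\chi_{\{u_+>0\}}$) then counts them as $M\le Cr^{-(N-1+\tau(p-1))}$, which gives the uniform bound on the Hausdorff premeasure. The only difference is that you extract a single disjoint family via Vitali's $5r$-lemma and enlarge the balls, whereas the paper uses Besicovitch's theorem to get $m(N)$ disjoint subfamilies covering without enlargement; for a single fixed radius this is immaterial.
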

\begin{proof}
    We present the argument for 
    \begin{equation*}
        \Gamma^+_0 := \Gamma_0 \cap \partial\{  u_+ >0 \}.
    \end{equation*}
    Fix a subset $\Omega' \Subset \Omega$ and a radius $\rho_*$ such that
    \begin{equation*}
        0 < \rho_* < \dfrac{\rho_0} {4},
    \end{equation*}
where $\rho_0 < \textnormal{dist}(\Omega',\partial  \Omega)$ is prescribed by \Cref{Optimal-growth-thm}.
For $0 < r < \rho_*/4$, the family
\[
\mathcal F:=\{\overline{B}_{r}(z): z\in \Gamma^+_0\cap\Omega'\}
\]
is a Besicovitch-type cover of $\Gamma^+_0\cap\Omega'$. By Besicovitch's covering theorem, there exists an $m=m(N)$ subcollections $\{\mathcal{G}_k\}_{k=1}^m$, each consisting of pairwise disjoint closed balls contained in $\mathcal{F}$, such that for $z_i \in \Gamma^+_0\cap\Omega'$,
    \[
\Gamma^+_0\cap\Omega' \subset \bigcup_{k=1}^m\bigcup_{ \overline{B}_{r}(z_i) \in \mathcal{G}_k} \overline{B}_{r}(z_i).
\]
Define the set
\begin{equation*}
    \mathcal{N}_{\rho_*}(\Gamma^+_0\cap\Omega'):= \big\{x: \textnormal{dist}(x,\Gamma^+_0\cap\Omega')< \rho_*\} \Subset \Omega,
\end{equation*}
and notice that $\overline{B}_r(z_i) \Subset \mathcal{N}_{\rho_*}(\Gamma^+_0 \cap\Omega')$. As illustrated in the proof of \Cref{lower-dens}, for each $z_i$, there exist $c_i>0$ independent of $r$ and $y_i \in B_{r/2}(z_i) \cap \{u_+>0\}$ such that $B_{c_i r}(y_i) \subset B_r(z_i)$, $B_{c_i r}(y_i)$ are  evidently pairwise disjoint in $\mathcal{G}_k$. It follows by \Cref{lower-dens}, and \Cref{upper-dens} that
\begin{align*}
      \textnormal{Card } \mathcal{G}_k \cdot C(N)  c^N r^N &\leq \sum_{i=1}^{\textnormal{Card } \mathcal{G}_k} \mathcal{L}^N(B_{c_i r}(y_i))  \nn \\
    &= \mathcal{L}^N \left( \bigcup_{i=1}^{\textnormal{Card } \mathcal{G}_k} B_{c_i r}(y_i)\right) \nn \\
    & \leq \mathcal{L}^N\left( \big\{0 < u_+\leq C_2 r^{1+\tau} \big\} \cap \mathcal{N}_{\rho_*}\big(\Gamma^+_0\cap\Omega'\big)\right)\nn \\
    &\leq Cr^{(1+\tau)(1-\gamma)} 
\end{align*}
    with $C>0$ and $c := \min_i c_i$ being independent of $r$. 
    Therefore, for each $k \in \{1,\dots,m\}$
\begin{equation*}
    \textnormal{Card } \mathcal{G}_k \leq Cr^{(1+\tau)(1-\gamma)-N}, 
\end{equation*}
and so we have constructed the finite cover
    \[
\Gamma^+_0\cap\Omega' \subset \bigcup_{j=1}^M \overline{B}_{r}(z_j),
\]
with $M := \sum_{k=1}^m \textnormal{Card } \mathcal{G}_k \leq mCr^{(1+\tau)(1-\gamma)-N} $. Accordingly, we estimate
\begin{equation*}
    \mathcal H^{N-1+\tau(p-1)}_{4r}(\Gamma^+_0\cap\Omega') \leq \sum_{j=1}^M (\text{diam} \ \overline{B}_{r} (z_j) )^{N-1+\tau(p-1)} \leq CMr^{N-1+\tau(p-1)} \leq C,
\end{equation*}
    and send $r \to 0$ to complete the proof.
\end{proof}

The following straightforward corollary sharpens the Hausdorff dimension drop of $\Gamma_0$ consequent upon its local porosity.  
 
\begin{corollary} The Hausdorff dimension of the set $\Gamma_0$ is less than or equal to $N-1+\tau(p-1)$.
\end{corollary}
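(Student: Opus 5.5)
The corollary should follow from the preceding theorem and the standard behaviour of Hausdorff measures under changes of dimension, together with an exhaustion of $\Omega$ by compactly contained subdomains. Set $s_0 := N-1+\tau(p-1)$ and keep $\gamma$ in the range \eqref{gamma-range}, which is the standing assumption of this subsection.

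\textbf{Step 1: exhaust $\Omega$.} Choose open sets $\Omega_j \Subset \Omega_{j+1} \Subset \Omega$ with $\bigcup_j \Omega_j = \Omega$; for instance $\Omega_j := \{x \in \Omega : \textnormal{dist}(x,\partial\Omega) > 1/j,\ |x|<j\}$. Then
\[
\Gamma_0 = \bigcup_{j} (\Gamma_0 \cap \Omega_j).
\]

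\textbf{Step 2: bound each piece.} By the preceding theorem, $\mathcal{H}^{s_0}(\Gamma_0\cap\Omega_j)<\infty$ for each $j$. Now use the elementary fact that if $\mathcal{H}^{s_0}(A)<\infty$, then $\mathcal{H}^{s}(A)=0$ for every $s>s_0$. Indeed, for any $\delta$-cover $\{E_i\}$ of $A$,
\[
\sum_i (\textnormal{diam}\, E_i)^s \le \delta^{s-s_0}\sum_i (\textnormal{diam}\, E_i)^{s_0},
\]
so $\mathcal{H}^s_\delta(A)\le \delta^{s-s_0}\,\mathcal{H}^{s_0}_\delta(A) \le \delta^{s-s_0}\,\mathcal{H}^{s_0}(A)$, and letting $\delta\to0$ gives $\mathcal{H}^s(A)=0$. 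Hence $\mathcal{H}^s(\Gamma_0\cap\Omega_j)=0$ for all $j$ and all $s>s_0$.

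\textbf{Step 3: combine.} Countable subadditivity gives $\mathcal{H}^s(\Gamma_0)=0$ for every $s>s_0$, so $\dim_{\mathcal H}(\Gamma_0)\le s_0$ by the definition of Hausdorff dimension as the infimum of such $s$.

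The argument is routine. The only point needing care is that the theorem holds for every $\Omega'\Subset\Omega$, including each $\Omega_j$. The theorem's constants depend on $\Omega'$, but this is harmless here because only finiteness on each piece is used, never a uniform bound.

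It is also worth noting, though not needed for the proof, that the result is a genuine sharpening of the porosity bound in \Cref{zero-lebesgue-0}: since $\tau(p-1)<1$ in the restricted range, we have $s_0<N$.
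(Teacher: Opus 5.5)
Your proof is correct and is exactly the routine deduction the paper intends: the paper states the corollary without proof, as a straightforward consequence of the preceding local theorem, and it invokes the fact you verify in Step~2 (finite $\mathcal{H}^{s_0}$ measure forces $\mathcal{H}^{s}=0$ for every $s>s_0$) in the proof of \Cref{hausdorff-nonbranch}. Your exhaustion by $\Omega_j\Subset\Omega$ together with countable subadditivity correctly passes from the local statement to all of $\Gamma_0$, and your side remark that $s_0<N$ is right, since $\tau(p-1)<1$ is equivalent to $\gamma<1$.
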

 
\appendix

\section{Some technical lemmas} \label{Apndx}

\begin{lemma}[{{\cite[Lemma 1.1] {maly1997fine}}}] \label{sum-p-harm} For any $a,b \in \R^N$ and $\varepsilon>0$, we have
\begin{equation*}
    |a+b|^p \leq \begin{cases}
        |a|^p + |b|^p \quad \textnormal{if} \quad 0 < p \leq 1, \\
        (1+\varepsilon)^{p-1} |a|^p + (1+\varepsilon^{-1})^{p-1} |b|^p \quad \textnormal{if} \quad 1 \leq p < \infty.
    \end{cases}
\end{equation*}
\end{lemma}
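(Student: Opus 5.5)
The statement is the elementary inequality of \Cref{sum-p-harm}: for $a,b\in\R^N$ and $\varepsilon>0$, $|a+b|^p\le |a|^p+|b|^p$ when $0<p\le1$, and $|a+b|^p\le(1+\varepsilon)^{p-1}|a|^p+(1+\varepsilon^{-1})^{p-1}|b|^p$ when $p\ge1$. The plan is to reduce to scalars via the triangle inequality $|a+b|\le |a|+|b|$. Since $t\mapsto t^p$ is nondecreasing on $[0,\infty)$, it then suffices to prove, for nonnegative reals $s=|a|$ and $t=|b|$, that $(s+t)^p$ is bounded by the right-hand side. This uses only $p>0$.

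For $0<p\le 1$ we use subadditivity of $x\mapsto x^p$ on $[0,\infty)$. If $s+t=0$ there is nothing to prove. Otherwise set $\theta=s/(s+t)\in[0,1]$. Since $p\le1$, we have $\theta^p\ge\theta$ and $(1-\theta)^p\ge 1-\theta$. Adding these gives $\theta^p+(1-\theta)^p\ge1$, and multiplying by $(s+t)^p$ yields $(s+t)^p\le s^p+t^p$.

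For $p\ge1$ we use convexity of $x\mapsto x^p$. Write
\[
s+t=\lambda\cdot\frac{s}{\lambda}+(1-\lambda)\cdot\frac{t}{1-\lambda},\qquad \lambda:=\frac{1}{1+\varepsilon}\in(0,1),\quad 1-\lambda=\frac{\varepsilon}{1+\varepsilon}.
\]
Jensen's inequality then gives
\[
(s+t)^p\le \lambda^{1-p}s^p+(1-\lambda)^{1-p}t^p.
\]
Here $\lambda^{1-p}=(1+\varepsilon)^{p-1}$ and $(1-\lambda)^{1-p}=\big((1+\varepsilon)/\varepsilon\big)^{p-1}=(1+\varepsilon^{-1})^{p-1}$, which is exactly the claimed bound.

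There is no real obstacle in this argument. The only step needing care is choosing the convex weight $\lambda=1/(1+\varepsilon)$ so that the constants come out in the stated form.
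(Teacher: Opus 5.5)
Your proof is correct and complete. You reduce to scalars via $|a+b|\le |a|+|b|$ and the monotonicity of $t\mapsto t^p$, use subadditivity of $t\mapsto t^p$ for $0<p\le 1$, and apply Jensen's inequality with weight $\lambda=1/(1+\varepsilon)$ for $p\ge 1$; this is the standard argument for this inequality, and the paper gives no proof of its own, only citing Mal\'y--Ziemer, so there is nothing further to compare.
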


The following iteration lemmas are used in the proofs of local boundedness and regularity.

\begin{lemma} [{{\cite[Lemma 3.1 of Ch. V]{giaquinta1983multiple}}}] \label{hole-filling}
    Let $\phi(t)$ be a nonnegative and bounded function in $[r_0,R_0]$ with $r_0\geq0$. Suppose that for $r_0 \leq t < s \leq R_0$
    \begin{equation*}
        \phi(t) \leq   \theta \phi(s) +  \left[ \dfrac{A}{(s-t)^\alpha}+ B \right],
    \end{equation*}
    where $A,B,\alpha,$ and $\theta$ are nonnegative constants and $\theta < 1$. Then, for all $r_0 \leq r < R \leq R_0$, it holds that 
    \[\phi(r) \leq   C(\alpha,\theta)\left[ \dfrac{A}{(R-r)^\alpha}+ B \right].\]
\end{lemma}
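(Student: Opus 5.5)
The plan is the standard iteration along a geometric sequence of radii: apply the hypothesis repeatedly on shells that shrink geometrically, so that the factors $\theta^i$ beat the growth of $(s-t)^{-\alpha}$. Fix $r_0\le r<R\le R_0$. Choose a parameter $\lambda\in(0,1)$ with $\theta\lambda^{-\alpha}<1$; this is possible since $\theta<1$, for example $\lambda^\alpha=(1+\theta)/2$ when $\alpha>0$ (if $\alpha=0$ any $\lambda$ works). Define
\[
t_0:=r,\qquad t_{i+1}:=t_i+(1-\lambda)\lambda^{i}(R-r),\quad i\ge0,
\]
so that $t_i$ increases to $r+(R-r)\sum_{i\ge0}(1-\lambda)\lambda^i=R$. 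In particular every $t_i$ lies in $[r_0,R)\subset[r_0,R_0]$, and $t_{i+1}-t_i=(1-\lambda)\lambda^i(R-r)>0$.

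Applying the hypothesis with $t=t_i$ and $s=t_{i+1}$ gives
\[
\phi(t_i)\le\theta\,\phi(t_{i+1})+\frac{A}{(1-\lambda)^\alpha(R-r)^\alpha}\,\lambda^{-i\alpha}+B .
\]
Chaining these inequalities from $i=0$ to $k-1$ yields
\[
\phi(r)\le\theta^k\phi(t_k)+\sum_{i=0}^{k-1}\theta^i\left[\frac{A}{(1-\lambda)^\alpha(R-r)^\alpha}\,\lambda^{-i\alpha}+B\right].
\]

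Now let $k\to\infty$. Since $\phi$ is bounded on $[r_0,R_0]$ and $\theta<1$, the term $\theta^k\phi(t_k)$ tends to $0$; this is the only place where boundedness of $\phi$ is used. The two series are geometric with ratios $\theta\lambda^{-\alpha}<1$ and $\theta<1$, so they converge and give
\[
\phi(r)\le\frac{(1-\lambda)^{-\alpha}}{1-\theta\lambda^{-\alpha}}\,\frac{A}{(R-r)^\alpha}+\frac{B}{1-\theta}.
\]
This is the claim with $C(\alpha,\theta):=\max\bigl\{(1-\lambda)^{-\alpha}(1-\theta\lambda^{-\alpha})^{-1},(1-\theta)^{-1}\bigr\}$, which depends only on $\alpha$ and $\theta$ through the choice of $\lambda$.

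The only delicate point is choosing $\lambda$ so that the decay $\theta^i$ dominates the blow-up $\lambda^{-i\alpha}$ of $(t_{i+1}-t_i)^{-\alpha}$. The choice above handles this, and everything else is routine summation.
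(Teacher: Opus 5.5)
Your proof is correct and is the standard argument from the source the paper cites (Giaquinta, Ch.~V, Lemma~3.1); the paper itself only invokes the lemma without proof. That argument also iterates along $t_{i+1}-t_i=(1-\lambda)\lambda^i(R-r)$ with $\theta\lambda^{-\alpha}<1$ and uses boundedness of $\phi$ to send $\theta^k\phi(t_k)\to 0$, and your separate summation of the $B$-term via $\sum_i\theta^i=(1-\theta)^{-1}$ is marginally sharper than the usual bound, which puts it into the same series $\sum_i\theta^i\lambda^{-i\alpha}$.
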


\begin{lemma} [{{\cite[Lemma 2.1 of Ch. III]{giaquinta1983multiple}}}; see also {{\cite[Remark 7.8]{giusti2003direct}}}] \label{Algebraic-ineq}
    Let $\phi$ be a nonnegative and nondecreasing function. Suppose that 
    \begin{equation*}
        \phi(r) \leq   A \left[\left(\dfrac{r}{R} \right)^\alpha + \mu \right]\phi(R) + B R^{\beta},
    \end{equation*}
    for $r \leq R \leq R_0$, with nonnegative constants $A,\alpha,\beta$ such that $ \beta < \alpha$. Then, there exists a constant $\mu_0 = \mu(A,\alpha,\beta)$ such that if $\mu < \mu_0$, then 
    \begin{equation} \label{iter-conc}
        \phi(r) \leq C \left[ \left( \dfrac{1}{R}\right)^\beta \phi(R) + B  \right] r^\beta,
    \end{equation}
    for all $r \leq R \leq R_0$ where $C$ depends on $A,\alpha,$ and $\beta$. Alternatively, if $\beta \geq \alpha$, then \eqref{iter-conc} is replaced by
    \begin{equation} \label{iter-conc2}
        \phi(r) \leq C \left[ \left( \dfrac{1}{R}\right)^{\alpha-\epsilon} \phi(R) + B  \right]r^{\alpha-\epsilon},
    \end{equation}
    for any $0 < \epsilon < \alpha$. 
\end{lemma}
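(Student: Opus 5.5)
This is Lemma \ref{Algebraic-ineq}, the classical Campanato-type iteration lemma. I would prove it by a geometric iteration at a fixed ratio $\sigma\in(0,1)$ followed by interpolation between dyadic scales.

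\textbf{Step 1: choice of $\sigma$ and $\mu_0$.} Assume first that $\beta<\alpha$. Fix $\gamma'$ with $\beta<\gamma'<\alpha$, for instance $\gamma'=(\alpha+\beta)/2$. Choose $\sigma\in(0,1)$ so that $2A\sigma^{\alpha}\le \sigma^{\gamma'}$, which is possible because $\alpha>\gamma'$. Then set $\mu_0:=\sigma^{\alpha}$. For $\mu<\mu_0$, the hypothesis applied with $r=\sigma R$ gives
\[
\phi(\sigma R)\le A\sigma^{\alpha}(1+\mu\sigma^{-\alpha})\phi(R)+BR^{\beta}\le 2A\sigma^\alpha\phi(R)+BR^\beta\le \sigma^{\gamma'}\phi(R)+BR^{\beta}.
\]

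\textbf{Step 2: iteration.} Apply Step 1 successively at the radii $\sigma^k R$. By induction,
\[
\phi(\sigma^{k+1}R)\le \sigma^{(k+1)\gamma'}\phi(R)+BR^{\beta}\sum_{j=0}^{k}\sigma^{j\gamma'}\sigma^{(k-j)\beta}.
\]
The sum equals $\sigma^{k\beta}\sum_{j}\sigma^{j(\gamma'-\beta)}\le \sigma^{k\beta}/(1-\sigma^{\gamma'-\beta})$. Hence
\[
\phi(\sigma^{k+1}R)\le C\sigma^{(k+1)\beta}\big(\phi(R)+BR^\beta\big).
\]

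\textbf{Step 3: interpolation.} Given $r\le R$, pick $k$ with $\sigma^{k+1}R<r\le\sigma^kR$. Since $\phi$ is nondecreasing, $\phi(r)\le\phi(\sigma^kR)\le C\sigma^{k\beta}(\phi(R)+BR^\beta)$. Using $\sigma^{k\beta}\le\sigma^{-\beta}(r/R)^\beta$, this becomes $\phi(r)\le C\big[R^{-\beta}\phi(R)+B\big]r^\beta$, which is \eqref{iter-conc}. The constants depend only on $A,\alpha,\beta$, since $\sigma$ does.

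\textbf{Step 4: the case $\beta\ge\alpha$.} Since $R\le R_0$, we have $BR^\beta\le BR_0^{\beta-\alpha+\epsilon}R^{\alpha-\epsilon}$. After absorbing $R_0$ into the constant (this dependence is implicit in the stated form), apply the first case with $\beta$ replaced by $\alpha-\epsilon<\alpha$. This yields \eqref{iter-conc2}.

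The only delicate point is Step 1: $\mu_0$ must be chosen together with $\sigma$, so that the perturbation $\mu$ does not spoil the strict decay factor $\sigma^{\gamma'}<1$. Once that is arranged, the rest is bookkeeping of geometric sums.
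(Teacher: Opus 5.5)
Your proof is correct and follows the standard argument of the source the paper cites (Giaquinta, Ch.~III, Lemma~2.1): fix $\beta<\gamma'<\alpha$, choose $\sigma$ with $2A\sigma^\alpha\le\sigma^{\gamma'}$ and set $\mu_0=\sigma^\alpha$, iterate at the radii $\sigma^kR$, then use monotonicity to pass to arbitrary $r$; the paper itself quotes the lemma without proof. Your reduction of the case $\beta\ge\alpha$ via $R^\beta\le R_0^{\beta-\alpha+\epsilon}R^{\alpha-\epsilon}$ is also the standard one, and you are right that in this case $C$ must also depend on $\epsilon$ and $R_0$ (and $\mu_0$ on $\epsilon$), which the statement leaves implicit.
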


The following Liouville-type lemma is used in the proof of the optimal growth of minimizers. We include the proof for the reader’s convenience.

\begin{lemma}\label{Lv}
Let $u$ be a $p$-harmonic function in $\mathbb R^N$ such that
\[
u(0)=0
\qquad\text{and}\qquad
\nabla u(0)=0.
\]
Assume that
\begin{equation}\label{lv-bnd2}
\|u\|_{L^\infty(B_R)}\le C R^{1+\tau}
\qquad\text{for every } \ \ \  R\ge 1,
\end{equation}
for some $\tau\in (0,\alpha_p)$, where $\alpha_p\in(0,1)$ is the optimal H\"older exponent for the gradient of $p$-harmonic functions. Then $u\equiv 0$ in $\mathbb R^N$.
\end{lemma}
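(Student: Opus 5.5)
We use the interior $C^{1,\alpha_p-\epsilon}$ estimate for $p$-harmonic functions, applied on large balls and rescaled. Fix $\tau<\alpha_p$ and choose $\epsilon>0$ with $\tau<\alpha_p-\epsilon$. The plan has three steps.

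\emph{Step 1: rescaled estimate.} For $R\geq 1$, set $u_R(x):=u(Rx)/R^{1+\tau}$ on $B_2$. This function is $p$-harmonic, since the $p$-Laplacian is invariant under $u\mapsto au(bx)$. By \eqref{lv-bnd2}, $\|u_R\|_{L^\infty(B_2)}\leq C2^{1+\tau}$, a bound independent of $R$. The defining property of $\alpha_p$, quantitatively through \Cref{harm-grad-osc-estim} combined with Campanato's characterization, gives
\[
[\nabla u_R]_{C^{0,\alpha_p-\epsilon}(B_1)}\leq C_*\|u_R\|_{L^\infty(B_2)}\leq C'.
\]
The constant $C'$ depends only on $N$, $p$, $\epsilon$ and $C$.

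\emph{Step 2: unscaling.} We have $\nabla u_R(x)=\nabla u(Rx)/R^{\tau}$. Using $\nabla u_R(0)=0$, for any fixed $y$ and every $R\geq\max\{1,|y|\}$,
\[
|\nabla u(y)|=R^{\tau}\,|\nabla u_R(y/R)-\nabla u_R(0)|\leq C'R^{\tau}\,(|y|/R)^{\alpha_p-\epsilon}=C'|y|^{\alpha_p-\epsilon}R^{\tau-\alpha_p+\epsilon}.
\]

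\emph{Step 3: conclusion.} The exponent $\tau-\alpha_p+\epsilon$ is negative, so letting $R\to\infty$ gives $\nabla u(y)=0$. Since $y$ is arbitrary, $\nabla u\equiv0$ on $\mathbb R^N$, so $u$ is constant, and $u(0)=0$ gives $u\equiv0$.

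The main obstacle is Step 1. We need a scale-invariant $C^{1,\beta}$ seminorm bound, with $\beta=\alpha_p-\epsilon$, controlled only by the $L^\infty$ norm. \Cref{harm-grad-osc-estim} supplies decay of the gradient's mean oscillation with exponent $\alpha_p$. We then still need to control $\int_{B_{3/2}}|\nabla u_R|^2$ by $\|u_R\|_{L^\infty(B_2)}$. For this, use the Caccioppoli inequality for $p$-harmonic functions (the $\delta=0$ case of the arguments in \Cref{Morrey}) together with the local boundedness of $\nabla u_R$ for $p$-harmonic functions. With that in hand, Campanato's theorem gives the Hölder bound on $B_1$. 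The loss of $\epsilon$ is harmless because $\tau<\alpha_p$ strictly, and $\epsilon$ just needs to be taken smaller than $\alpha_p-\tau$.
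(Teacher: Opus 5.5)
Your proposal is correct and follows the paper's proof essentially step by step. You rescale to $u_R(x)=u(Rx)/R^{1+\tau}$, apply the interior $C^{1,\beta}$ estimate for $p$-harmonic functions with $\nabla u_R(0)=0$, unscale to get $|\nabla u(y)|\le C|y|^{\beta}R^{\tau-\beta}$, and send $R\to\infty$. The differences are minor: you take $\beta=\alpha_p-\epsilon$ instead of $\beta=\alpha_p$, which is harmless since $\tau<\alpha_p$ strictly and avoids relying on the estimate at the exact optimal exponent. You also state explicitly, unlike the paper's $C_p=C_p(N,p)$, that the H\"older constant depends on the bound in \eqref{lv-bnd2}, and you justify it via \Cref{harm-grad-osc-estim}, Caccioppoli, and Campanato.
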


\begin{proof}
For $R\ge 1$, define the rescaled function
\[
u_R(x):=\frac{u(Rx)}{R^{1+\tau}}, \qquad x\in B_1.
\]
Since $u$ is $p$-harmonic in $\mathbb R^N$, each $u_R$ is $p$-harmonic in $B_1$.
Moreover, by \eqref{lv-bnd2},
\[
\|u_R\|_{L^\infty(B_1)} \le C,
\]
with $C$ independent of $R$. Also,
\[
u_R(0)=0
\qquad\text{and}\qquad
\nabla u_R(0)=R^{-\tau}\nabla u(0)=0.
\]
By the interior $C^{1,\alpha_p}$ estimate for $p$-harmonic functions, there exists
a constant $C_p=C_p(N,p)>0$ such that
\[
|\nabla u_R(x)-\nabla u_R(0)|
\le C_p |x|^{\alpha_p}
\qquad \text{for all } \ \ \  x\in B_{1/2}.
\]
Since $\nabla u_R(0)=0$, this yields
\[
|\nabla u_R(x)|\le C_p |x|^{\alpha_p}
\qquad \text{for all } \ \ \ x\in B_{1/2}.
\]
Fix $y\in \mathbb R^N$. Choose $R>2|y|$, so that $x:=y/R\in B_{1/2}$. Then
\[
\nabla u_R(x)=R^{-\tau}\nabla u(y),
\]
and it follows that
\[
R^{-\tau}|\nabla u(y)|
= |\nabla u_R(y/R)|
\le C_p \left(\frac{|y|}{R}\right)^{\alpha_p}.
\]
Therefore,
\[
|\nabla u(y)|
\le C_p |y|^{\alpha_p} R^{\tau-\alpha_p}.
\]
Recalling that $\tau<\alpha_p$ and letting $R\to\infty$ yield
\[
\nabla u(y)=0.
\]
As $y\in \mathbb R^N$ was arbitrary, we conclude that $\nabla u\equiv 0$ in $\mathbb R^N$.
Hence, $u$ is constant. Since $u(0)=0$, it follows that $u\equiv 0$.
\end{proof}

{\small \noindent{\bf Acknowledgments.} The authors thank the anonymous referee for the constructive feedback provided during the review process. This publication is based upon work supported by King Abdullah University of Science and Technology (KAUST) under Award No. ORFS-CRG12-2024-6430.}

\end{document}